\newcommand*{\R}{{\mathbb R}}
\newcommand*{\N}{{\mathbb N}}
\newcommand*{\eps}{\varepsilon}
\newcommand*{\calE}{\mathcal{E}}
\providecommand*{\vint}[1]{\mathchoice
          {\mathop{\vrule width 5pt height 3 pt depth -2.5pt
                  \kern -9pt \kern 1pt\intop}\nolimits_{\kern -5pt{#1}}}
          {\mathop{\vrule width 5pt height 3 pt depth -2.6pt
                  \kern -6pt \intop}\nolimits_{\kern -3pt{#1}}}
          {\mathop{\vrule width 5pt height 3 pt depth -2.6pt
                  \kern -6pt \intop}\nolimits_{\kern -3pt{#1}}}
          {\mathop{\vrule width 5pt height 3 pt depth -2.6pt
                  \kern -6pt \intop}\nolimits_{\kern -3pt{#1}}}}
\DeclareMathOperator{\diam}{diam}
\DeclareMathOperator*{\essinf}{ess\,inf} 
\newcommand{\norm}[1]{\left\lVert#1\right\rVert}  
\newcommand{\abs}[1]{\left\lvert#1\right\rvert}  
\newcommand{\hdim}{d_{\mathrm{f}}}  
\newcommand{\pwalk}{d_{\mathrm{w},p}}  
\numberwithin{equation}{section}
\theoremstyle{plain}
\newtheorem{thm}[equation]{Theorem}
\newtheorem{prop}[equation]{Proposition}
\newtheorem{cor}[equation]{Corollary}
\newtheorem{lem}[equation]{Lemma}
\theoremstyle{definition}
\newtheorem{defn}[equation]{Definition}
\newtheorem{remark}[equation]{Remark}
\newtheorem{example}[equation]{Example}
\begin{document}

\title[Finite dimensionality of Besov spaces, and decomposition]
{Finite dimensionality of Besov spaces and potential-theoretic decomposition of metric spaces}
\author[T. Kumagai]{Takashi Kumagai}
\address{T.K.: Department of Mathematics,
Waseda University, 3-4-1 Okubo, Shinjuku-ku, Tokyo 169-8555, Japan.}
\email{t-kumagai@waseda.jp}
\author[N. Shanmugalingam]{Nageswari Shanmugalingam}
\address{N.S.: Department of Mathematical Sciences, P.O.~Box 210025, University of Cincinnati, Cincinnati, OH~45221-0025, U.S.A.}
\email{shanmun@uc.edu}
\author[R. Shimizu]{Ryosuke Shimizu}
\address{R.S.: Waseda Research Institute for Science and Engineering,
Waseda University, 3-4-1 Okubo, Shinjuku-ku, Tokyo 169-8555, Japan.}
\curraddr{G{\tiny RADUATE} S{\tiny CHOOL} {\tiny OF} I{\tiny NFORMATICS}, K{\tiny YOTO} U{\tiny NIVERSITY}, Y{\tiny OSHIDA-HONMACHI}, S{\tiny AKYO-KU}, K{\tiny YOTO} 606-8501, J{\tiny APAN}.}
\email{r.shimizu@acs.i.kyoto-u.ac.jp}
\thanks{
 T.K.'s work is partially supported by JSPS KAKENHI Grant Numbers 22H00099 and 23KK0050.
N.S.'s work is partially supported by the NSF (U.S.A.) grant DMS~\#2054960.
R.S.'s work (JSPS Research Fellow-PD) is partially supported by
JSPS KAKENHI Grant Number 23KJ2011. The authors thank the referee for the kind suggestions that helped improve the exposition of the paper.}
\maketitle

\begin{abstract}
In the context of a metric measure space $(X,d,\mu)$, we explore the
potential-theoretic implications of having a finite-dimensional Besov space.
We prove that if the dimension of the Besov space $B^\theta_{p,p}(X)$ is $k>1$,
then $X$ can be decomposed into $k$ number of irreducible components (Theorem \ref{thm:main1}).
Note that $\theta$ may be bigger than $1$, as our framework includes fractals.
We also provide sufficient conditions under which the dimension of the Besov space is $1$.
We introduce critical exponents $\theta_p(X)$ and $\theta_p^{\ast}(X)$ for the Besov spaces.
As examples illustrating Theorem~\ref{thm:main1}, we compute these critical exponents for spaces $X$
formed by glueing copies of $n$-dimensional cubes, the Sierpi\'{n}ski gaskets, and of the Sierpi\'{n}ski carpet.
\end{abstract}

\noindent
    {\small \emph{Key words and phrases}:
Besov spaces, Korevaar-Schoen spaces, fractal, irreducible $p$-energy form,
Newton-Sobolev spaces, $p$-Poincar\'e inequality, Sierpi\'nski fractals, decomposition}.

\medskip

\noindent
    {\small Mathematics Subject Classification (2020):
Primary: 31E05, 28A80;
Secondary: 46E36, 31C25}

\section{Introduction}\label{Sec:1}

Given a compact metric space $(X,d)$ equipped with a doubling measure $\mu$, a viable theory of local Dirichlet-type
energy forms is obtained by considering the Newton-Sobolev class $N^{1,p}(X)$
of functions on $X$ if we know that $(X,d,\mu)$ supports a $p$-Poincar\'e inequality for
some $1\le p<\infty$. However, when no Poincar\'e type inequality is available on $(X,d,\mu)$, a more natural local energy
form is given by the so-called Korevaar-Schoen space $KS^1_p(X)$,
see for instance~\cite{KM}.
We are interested in the  
function-classes
$B^\theta_{p,p}(X)$ (Besov), $B^\theta_{p,\infty}(X)$,
and $KS^\theta_p(X)$
(Korevaar-Schoen).
These are spaces of functions in $L^p(X)$ for which the following respective energies are finite:
\begin{align*}
||u||_{B^\theta_{p,p}(X)}^p&:=\int_X\int_X\frac{|u(y)-u(x)|^p}{d(x,y)^{\theta p}\, \mu(B(x,d(x,y)))}\, d\mu(y)\, d\mu(x)\\
  &\qquad\qquad\approx \int_0^{\diam(X)} \int_X\vint{B(x,t)}\frac{|u(y)-u(x)|^p}{t^{\theta p}}\, d\mu(y)\, d\mu(x)\, \frac{dt}{t};\\
||u||_{B^\theta_{p,\infty}(X)}^p&:=\sup_{t>0}\, \int_X\vint{B(x,t)}\frac{|u(y)-u(x)|^p}{t^{\theta p}}\, d\mu(y)\, d\mu(x);\\
||u||_{KS^\theta_p(X)}^p&:=\limsup_{t\to 0^+}\,\int_X\vint{B(x,t)}\frac{|u(y)-u(x)|^p}{t^{\theta p}}\, d\mu(y)\, d\mu(x),
\end{align*}
where, by $F\approx H$ we mean that there is a constant $C\ge 1$, independent of the parameters
$F$ and $H$ depend on (in the above it would be $u$), so that $C^{-1}\le F/H\le C$.
(For the equivalence on $||u||_{B^\theta_{p,p}(X)}^p$ under the volume doubling property, see \cite[Theorem 5.2]{GKS}.)
While the energy $||u||_{KS^\theta_p(X)}$ is local, the energy $||u||_{B^\theta_{p,\infty}(X)}$ is not.
In general we do not know that the two norms $||u||_{B^\theta_{p,\infty}(X)}$ and $||u||_{KS^\theta_p(X)}$ are comparable,
but because $\mu$ is doubling, we have that as sets, $B^\theta_{p,\infty}(X)=KS^\theta_p(X)$, see Lemma~\ref{lem:B=KS} below.

The goal of this paper is to investigate what the potential-theoretic implications are of knowing that $B^\theta_{p,p}(X)$ has finite dimension.
The following two critical exponents $\theta_{p}(X)$ and $\theta_{p}^{\ast}(X)$ for the Besov space will play important roles.
Throughout the paper, we assume that $X$ has infinitely many points.
Inspired by the ground-breaking result of
Bourgain, Brezis and Mironescu~\cite{BBM},
we define
\begin{align*}
	\theta_p(X) &\coloneqq \theta_{p} \coloneqq \sup\{\theta>0: B^\theta_{p,p}(X)~\mbox{contains non-constant functions}\}; \\
	\theta_p^{\ast}(X) &\coloneqq \theta_{p}^{\ast} \coloneqq \sup\{\theta>0: B^\theta_{p,p}(X)~\mbox{is dense in $L^{p}(X)$}\}.
\end{align*}
Note that $\theta_p(X)\ge 1$ if $(X,d,\mu)$ is a doubling metric measure space (see
Lemma~\ref{lem:theta_pge1}), and that $\theta_{p}(X) \ge \theta_{p}^{\ast}(X)$.
When the measure on $X$ is doubling and supports a $p$-Poincar\'e inequality for all function-upper gradient pairs
as in~\eqref{e:PIug}, then we must have $\theta_p=1$.
If the dimension of $B^\theta_{p,p}(X)$ is $1$, then $B^\theta_{p,p}(X)$ consists solely of constant functions and $\theta_p(X)\le \theta$.
The following theorem tells us that if the dimension of $B^\theta_{p,p}(X)$ is finite but larger than $1$, then $X$ can be decomposed
into as many pieces as the dimension of $B^\theta_{p,p}(X)$ so that there is no potential-theoretic communication between
different pieces.

\begin{thm}\label{thm:main1}
Let $(X,d,\mu)$ be a doubling metric measure space as in~\eqref{eq:defn.VD} 
and $\theta>0$. Suppose that the dimension of
$B^\theta_{p,p}(X)$ is finite. Then 
either $\mu(X)=\infty$ and $B^\theta_{p,p}(X)=\{0\}$ {\rm (}in which case
$\theta\ge \theta_p(X)${\rm )}, or
there exist measurable sets $E_1,\cdots, E_k$, with $k$ the dimension of $B^\theta_{p,p}(X)$,
such that the following hold:
\begin{enumerate}[\rm(1)]
\item\label{it:positivemass} $0<\mu(E_i)<\infty$ for $i=1,\cdots, k$,
\item\label{it:cover} If $\mu(X)<\infty$, then $\mu(X\setminus \bigcup_{i=1}^kE_i)=0$,
\item\label{it:basis} $\chi_{E_i}\in B^\theta_{p,p}(X)$ for $i=1,\cdots, k$, and
$\{\chi_{E_i}\, :\, i=1,\cdots, k\}$ forms a basis for $B^\theta_{p,p}(X)$.
\item\label{it:directsum} $B^\theta_{p,p}(X)=\oplus_{i=1}^k B^\theta_{p,p}(E_i):=\{f\in L^p(X): f|_{E_i}\in B^\theta_{p,p}(E_i), i=1,\cdots, k\}$ as sets.
Moreover, the dimension of $B^\theta_{p,p}(E_i)$ is $1$ for all
$i=1,\cdots, k$.
\item\label{it:zeroenergy} $||\chi_{E_i}||_{KS^\theta_p(X)}=0$ for $i=1,\cdots, k$.
\item\label{it:energydecomp} If $u\in KS^\theta_p(X)\cap L^\infty(X)$, then for $j=1,\cdots, k$ we have
\[
\|u\, \chi_{E_j}\|_{KS^\theta_p(X)}^p
= \limsup_{r\to 0^+}\int_{E_j}\int_{B(x,r)\cap E_j}\frac{|u(y)-u(x)|^p}{r^{\theta p}\, \mu(B(x,r))}\, d\mu(y)\, d\mu(x).
\]
\item\label{it:thetap} $\theta\le\theta_p(X)$ if $k>1$ or $\mu(X)=\infty$ with $k=1$, and $\theta\ge\theta_p(X)$ if $\mu(X)<\infty$ and $k=1$. 
\end{enumerate}
\end{thm}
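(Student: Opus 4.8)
The plan is to exhibit the idempotents $\chi_{E_i}$ directly from the algebra structure of $B^\theta_{p,p}(X)$. The first observation is that $B^\theta_{p,p}(X)\cap L^\infty(X)$ is closed under products: if $u,v\in B^\theta_{p,p}(X)\cap L^\infty$, then from the pointwise inequality $|uv(y)-uv(x)|\le \|u\|_\infty|v(y)-v(x)|+\|v\|_\infty|u(x)-u(y)|$ one gets $\|uv\|_{B^\theta_{p,p}}\lesssim \|u\|_\infty\|v\|_{B^\theta_{p,p}}+\|v\|_\infty\|u\|_{B^\theta_{p,p}}$; the same inequality, with $t$-dependent integrands, shows $KS^\theta_p$ and $B^\theta_{p,\infty}$ are also algebras, which we will need later. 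Next I would show that if $\dim B^\theta_{p,p}(X)=k<\infty$, then every $u\in B^\theta_{p,p}(X)$ is bounded and takes only finitely many values: boundedness follows because $1,u,u^2,\dots$ (truncations, if needed) must be linearly dependent in a $k$-dimensional space, forcing $u$ to satisfy a polynomial identity $\mu$-a.e.\ and hence to be $\mu$-essentially finite-valued. Writing the distinct essential values as $c_1,\dots,c_m$ with level sets $A_1,\dots,A_m$, each $\chi_{A_j}$ is a polynomial in $u$ (Lagrange interpolation) and so lies in $B^\theta_{p,p}(X)$; thus $B^\theta_{p,p}(X)$ is spanned by characteristic functions of measurable sets.

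Now I would organize these characteristic-function generators: let $\mathcal{F}$ be the (finite) collection of measurable sets $A$ with $0<\mu(A)$ and $\chi_A\in B^\theta_{p,p}(X)$ that are \emph{minimal} with respect to this property modulo null sets (minimality is well-defined since the family is closed under the operations $A\cap B$, $A\setminus B$ coming from $\chi_{A\cap B}=\chi_A\chi_B$ and the algebra property, and there are only finitely many independent such functions). Call the members $E_1,\dots,E_k$. Minimality plus the intersection-closedness forces the $E_i$ to be pairwise disjoint up to null sets, which gives \eqref{it:positivemass}; that they span gives \eqref{it:basis}; linear independence of $\{\chi_{E_i}\}$ is immediate from disjointness and positivity of measure, so $k=\dim B^\theta_{p,p}(X)$. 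For \eqref{it:cover}: if $\mu(X)<\infty$ then $\chi_X\in B^\theta_{p,p}(X)$ (constants are in the Besov space when $\mu(X)<\infty$), hence $\chi_X=\sum a_i\chi_{E_i}$, forcing $a_i=1$ and $\mu(X\setminus\bigcup E_i)=0$. The case $\mu(X)=\infty$ with $B^\theta_{p,p}(X)=\{0\}$: if $B^\theta_{p,p}(X)$ contained a nonzero function it would, by the above, contain some $\chi_A$ with $0<\mu(A)$, but $\int_A\int_{X\setminus B(x,t_0)}\dots$ with $\mu(X)=\infty$ can be shown infinite unless... — more precisely, one checks that a nonconstant-valued bounded Besov function on an infinite-measure doubling space cannot have finite energy, while a nonzero constant multiple of $\chi_A$ with $\mu(A)=\infty$ fails to be in $L^p$; this is the degenerate branch, and then $\theta\ge\theta_p(X)$ by definition of the critical exponent since $B^\theta_{p,p}(X)$ has no nonconstant functions.

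For the decomposition \eqref{it:directsum} and the energy statements \eqref{it:zeroenergy}--\eqref{it:energydecomp}, the key point is that $\chi_{E_i}\in B^\theta_{p,p}(X)$ forces \emph{no potential-theoretic communication across $E_i$}, in the quantitative form $\|\chi_{E_i}\|_{KS^\theta_p(X)}=0$: indeed, by minimality, $\chi_{E_i}$ cannot be split, and the Besov energy of $\chi_{E_i}$ controls $\int_{E_i}\int_{B(x,t)\setminus E_i} t^{-\theta p}\mu(B(x,t))^{-1}\,d\mu\,d\mu$ integrated in $dt/t$, which by a scaling/Lebesgue-differentiation argument must vanish at small scales — otherwise one could produce a further nontrivial Besov idempotent supported strictly inside $E_i$, contradicting minimality. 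Granting \eqref{it:zeroenergy}, splitting $u=\sum_j u\chi_{E_j}$ for bounded $u$ and expanding $|u(y)-u(x)|^p$ according to whether $x,y$ lie in the same $E_j$ or not, the cross terms are dominated by $\|u\|_\infty^p$ times the vanishing quantities from \eqref{it:zeroenergy}, which yields both the direct-sum identity and formula \eqref{it:energydecomp}; that $\dim B^\theta_{p,p}(E_i)=1$ is then exactly minimality of $E_i$ transported to the space $(E_i,d,\mu|_{E_i})$. Finally \eqref{it:thetap}: if $k>1$ the functions $\chi_{E_i}$ are nonconstant elements of $B^\theta_{p,p}(X)$, so $\theta\le\theta_p(X)$ by definition; if $\mu(X)<\infty$ and $k=1$ then $B^\theta_{p,p}(X)$ consists only of constants, so no nonconstant function survives at this $\theta$, giving $\theta\ge\theta_p(X)$; the infinite-measure $k=1$ case is the degenerate branch already handled.

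The main obstacle I expect is step \eqref{it:zeroenergy} — upgrading ``$\chi_{E_i}$ is minimal'' to ``$\|\chi_{E_i}\|_{KS^\theta_p(X)}=0$''. This requires a genuine argument that small-scale interaction across the boundary of a minimal Besov idempotent must vanish; the natural route is a localization/good-set argument: if the $KS$-energy were positive, a Lebesgue-point and relative-isoperimetric-type estimate would let one carve $E_i$ into two pieces each carrying a characteristic function of finite Besov energy, contradicting minimality. Making the measurability and the ``carving'' precise in a merely doubling (no Poincar\'e) setting — where one cannot appeal to quasiconvexity or to a coarea formula — is the delicate part, and is presumably where the paper does its real work.
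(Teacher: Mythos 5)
Your handling of claims (1)--(4) and (7) is a legitimate alternative route to the paper's. Where the paper proves boundedness and finite-valuedness by producing $k+1$ linearly independent truncations $\max\{f-a_i,0\}$ of a putative counterexample, and then builds the sets $E_i$ by taking the level sets of a fixed basis and refining them pairwise via the Leibniz rule (Lemma~\ref{lem:leibniz}), you use the algebra structure ($1,u,u^2,\dots$ must be linearly dependent, so $u$ satisfies a polynomial identity $\mu$-a.e.) and then take the atoms of the resulting finite Boolean algebra of Besov idempotents. Both work; yours needs the caveats that the powers argument must be run on truncations first (the Leibniz bound requires $u\in L^\infty$, which is what you are trying to prove), and that when $\mu(X)=\infty$ constants are not in the space, so the Lagrange interpolation must use polynomials vanishing at $0$. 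Your side remark that a nonconstant bounded Besov function on an infinite-measure doubling space ``cannot have finite energy'' is false (e.g.\ $\chi_{B(0,1)}\in B^{\theta}_{p,p}(\mathbb{R}^n)$ when $\theta p<1$); the dichotomy in the theorem is simply that whenever $B^\theta_{p,p}(X)\neq\{0\}$ one is in the second branch, whether or not $\mu(X)$ is finite.

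The genuine gap is claim (5), which you flag as ``the main obstacle'' and propose to attack through minimality of $E_i$, a carving argument, and relative-isoperimetric estimates. None of that is needed, and it is doubtful the carving could be made to work in a merely doubling space. The mechanism you half-state and then abandon is already the entire proof: for \emph{every} $u\in B^\theta_{p,p}(X)$, not just minimal idempotents, one has $\|u\|^p_{B^\theta_{p,p}(X)}\approx\int_0^{\diam X}\mathcal{E}_\theta(u,t)\,\frac{dt}{t}$, where $\mathcal{E}_\theta(u,t):=\int_X\vint{B(x,t)}|u(y)-u(x)|^p\,t^{-\theta p}\,d\mu(y)\,d\mu(x)$. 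Finiteness of the $dt/t$-integral forces the dyadic series $\sum_i\mathcal{E}_\theta(u,2^i)$ to converge, hence $\mathcal{E}_\theta(u,2^i)\to0$ as $i\to-\infty$, and the doubling property gives $\mathcal{E}_\theta(u,t)\le C\,\mathcal{E}_\theta(u,2^i)$ for $2^{i-1}\le t\le 2^i$, so $\limsup_{t\to0^+}\mathcal{E}_\theta(u,t)=0$. This is exactly Lemma~\ref{lem:Besov-KS} of the paper, proved before the main theorem and entirely independent of the decomposition; no Lebesgue-point or isoperimetric input is required. Once (5) is in hand, your treatment of (6) --- expand the difference quotient and dominate the cross terms by $\|u\|_{L^\infty(X)}^p$ times the vanishing $KS$-energy of $\chi_{E_j}$ --- coincides with the paper's Lemma~\ref{lem:main.sub6}.
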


In Condition~\ref{it:energydecomp} above, we do not know whether we can remove the requirement that
$u\in L^\infty(X)$.

As a consequence of the above theorem, if $k>1$, we have a decomposition of $X$ into measurable pieces $E_i$, $i=1,\cdots, k$
(up to a null-measure set) so that there is no potential theoretic communication between different pieces; this is encoded
in the claim $||\chi_{E_i}||_{KS^\theta_p(X)}=0$.
Moreover, Condition~\ref{it:directsum}
also encodes the property that $\mu(E_i\cap E_j)=0$ when $i, j\in\{1,\cdots,k\}$ with $i\ne j$.

We now introduce the notion of \emph{irreducible $p$-energy form} for convenience.

\begin{defn}[Irreducible $p$-energy form]\label{defn:irreducible}
	Assume that $\mu(X) < \infty$.
	Let $\mathcal{F}_{p}$ be a linear subspace of $L^{p}(X,\mu)$ and let $\mathcal{E}_{p} \colon \mathcal{F}_{p} \to [0,\infty)$ be such that $\mathcal{E}_{p}(\,\cdot\,)^{1/p}$ is a seminorm on $\mathcal{F}_{p}$.
	We say that $(\mathcal{E}_{p},\mathcal{F}_{p})$ is a \emph{irreducible $p$-energy form} on $(X,\mu)$ if
	whenever $u\in \mathcal{F}_p$ with $\mathcal{E}_p(u)=0$, we must have
	that $u$ is a constant function ($\mu$-a.e.). Otherwise, we say $(\mathcal{E}_{p},\mathcal{F}_{p})$ is a \emph{reducible $p$-energy form}.
\end{defn}
\begin{remark}
	The above definition is inspired by the theory of symmetric Dirichlet forms
	(i.e. $p=2$ case).
	See \cite[Theorem 2.1.11]{CF} for other (equivalent) formulations of the irreducibility of recurrent symmetric Dirichlet forms.
\end{remark}
By Theorem \ref{thm:main1}~\ref{it:zeroenergy}, we have the following;
if the dimension of $B^\theta_{p,p}(X)$ is finite and larger than $1$, then
$(\|\cdot \|_{KS^\theta_p(X)}, KS^\theta_p(X))$ is reducible.
Note that if the dimension of $B^\theta_{p,p}(X)$ is $1$ and 
$\mu(X)<\infty$, then clearly
$(\norm{\,\cdot\,}_{B^{\theta}_{p,p}(X)}^{p}$, $B^{\theta}_{p,p}(X))$ is
irreducible, and only constant functions are in $B^\theta_{p,p}(X)$.
Next we provide a sufficient condition regarding the behaviors of $\norm{\,\cdot\,}_{B^{\theta}_{p,p}(X)}$ and
of $\norm{\,\cdot\,}_{KS^{\theta}_{p}(X)}$ under which the dimension of $B^{\theta}_{p,p}(X)$ is $1$.

\begin{defn}
We say that $X$ satisfies the \emph{weak maximality property}, or  \ref{e:wmax} property,
for $B^\theta_{p,\infty}(X)$ if there is a constant $C\ge 1$ such that for each
$u\in B^\theta_{p,\infty}(X)$ we have that
\begin{equation}\label{e:wmax}
||u||_{B^\theta_{p,\infty}(X)}\le C\, ||u||_{KS^\theta_p(X)}. \tag*{\textup{(w-max)$_{p,\theta}$}}
\end{equation}
\end{defn}

\begin{thm}\label{thm:main2}
We fix $1<p<\infty$ and $\theta>0$. If $(X,d,\mu)$ is a doubling metric measure space that satisfies the \ref{e:wmax} property
for $B^\theta_{p,\infty}(X)$,
then the dimension of $B^{\theta}_{p,p}(X)$ is at most $1$, and $\theta_p(X)\le \theta$. 
\end{thm}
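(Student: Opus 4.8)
\emph{Strategy.} The plan is to prove the apparently stronger statement that \emph{every} $u\in B^{\theta}_{p,p}(X)$ is constant $\mu$-a.e.\ (and hence $u=0$ when $\mu(X)=\infty$). This forces $\dim B^{\theta}_{p,p}(X)\le 1$, and the bound $\theta_p(X)\le\theta$ then follows from the nesting $B^{\theta'}_{p,p}(X)\subseteq B^{\theta}_{p,p}(X)$ for $\theta'\ge\theta$, which one obtains from the elementary estimate $\phi(t):=\int_X\vint{B(x,t)}|u(y)-u(x)|^p\,d\mu(y)\,d\mu(x)\le C\|u\|_{L^p(X)}^p$ by comparing $t^{-\theta p}\phi(t)$ with $t^{-\theta' p}\phi(t)$ (splitting the $t$-integral at $t=1$ when $\diam(X)=\infty$).

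\emph{Step 1: the Besov energy density vanishes at small scales.} Put
\[
E_u(t):=\int_X\vint{B(x,t)}\frac{|u(y)-u(x)|^p}{t^{\theta p}}\,d\mu(y)\,d\mu(x),
\]
so that $\|u\|_{B^{\theta}_{p,p}(X)}^p\approx\int_0^{\diam(X)}E_u(t)\,\frac{dt}{t}$ and $\|u\|_{KS^{\theta}_p(X)}^p=\limsup_{t\to0^+}E_u(t)$. A priori the finiteness of $\int_0^{\diam(X)}E_u(t)\,\frac{dt}{t}$ controls $E_u$ near $0$ only in an averaged sense, and the key point is to upgrade this to $\lim_{t\to0^+}E_u(t)=0$ using doubling. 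I would first record the one-sided near-monotonicity
\[
E_u(s)\le C_{\mathrm D}\,2^{\theta p}\,E_u(s')\qquad\text{whenever }0<s\le s'\le 2s,
\]
where $C_{\mathrm D}$ is the doubling constant: enlarging the ball only increases $\int_{B(x,\,\cdot\,)}|u(y)-u(x)|^p\,d\mu(y)$, doubling decreases $1/\mu(B(x,\,\cdot\,))$ by at most the factor $C_{\mathrm D}$, and $s^{-\theta p}\le 2^{\theta p}(s')^{-\theta p}$. Applying this on the dyadic annuli $[2^{-n-1},2^{-n}]$ gives, on the one hand, $\int_{2^{-n-1}}^{2^{-n}}E_u(t)\,\frac{dt}{t}\ge(C_{\mathrm D}2^{\theta p})^{-1}(\log 2)\,E_u(2^{-n-1})$, so $\sum_{n}E_u(2^{-n})<\infty$ and in particular $E_u(2^{-n})\to0$; and, on the other hand, $\sup_{t\in[2^{-n-1},2^{-n}]}E_u(t)\le C_{\mathrm D}2^{\theta p}E_u(2^{-n})$. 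Together these yield $\lim_{t\to0^+}E_u(t)=0$, that is, $\|u\|_{KS^{\theta}_p(X)}=0$.

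\emph{Step 2: applying \ref{e:wmax} and concluding.} Next I would check that $u\in B^{\theta}_{p,\infty}(X)$, so that \ref{e:wmax} is applicable to $u$. It suffices to bound $\sup_{t>0}E_u(t)$: a Fubini computation together with the bound $\mu(B(y,t))\le C_{\mathrm D}\mu(B(x,t))$ valid for $x\in B(y,t)$ gives $\int_X\vint{B(x,t)}|u(y)|^p\,d\mu(y)\,d\mu(x)\le C_{\mathrm D}\|u\|_{L^p(X)}^p$, hence $E_u(t)\le C\,t^{-\theta p}\|u\|_{L^p(X)}^p$, which combined with Step 1 yields $\sup_{t>0}E_u(t)<\infty$; alternatively one may simply invoke $B^{\theta}_{p,\infty}(X)=KS^{\theta}_p(X)$ from Lemma~\ref{lem:B=KS}, since $\|u\|_{KS^{\theta}_p(X)}=0$ in particular means $u\in KS^{\theta}_p(X)$. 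Now \ref{e:wmax} gives $\|u\|_{B^{\theta}_{p,\infty}(X)}\le C\|u\|_{KS^{\theta}_p(X)}=0$, so $E_u(t)=0$ for every $t>0$. From $E_u(t)=0$ we read off that for $\mu$-a.e.\ $x$ one has $u(y)=u(x)$ for $\mu$-a.e.\ $y\in B(x,t)$; letting $t=n\to\infty$ and intersecting the corresponding countably many full-measure sets produces a point $x_0$ with $u=u(x_0)$ $\mu$-a.e.\ on $X$. Thus $u$ is constant, and when $\mu(X)=\infty$ the requirement $u\in L^p(X)$ forces $u=0$. Hence $B^{\theta}_{p,p}(X)$ contains no non-constant function, so $\dim B^{\theta}_{p,p}(X)\le1$, and by the nesting noted in the first paragraph $B^{\theta'}_{p,p}(X)$ contains no non-constant function for any $\theta'\ge\theta$, whence $\theta_p(X)\le\theta$.

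\emph{Main obstacle.} The delicate point is the upgrade in Step 1, from $\int_0^{\diam(X)}E_u(t)\,\frac{dt}{t}<\infty$ to the genuine limit $\lim_{t\to0^+}E_u(t)=0$. This step really uses doubling (it would fail for a general $\sigma$-finite measure), and it is precisely what makes \ref{e:wmax} usable, since \ref{e:wmax} bounds the full supremum of $E_u$ by its $\limsup$ at small scales rather than by a scale-averaged quantity; everything after Step 1 is soft.
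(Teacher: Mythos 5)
Your proposal is correct and follows essentially the same route as the paper: the heart of both arguments is that every $u\in B^{\theta}_{p,p}(X)$ has vanishing Korevaar--Schoen energy (your Step~1 is precisely the paper's Lemma~\ref{lem:Besov-KS}, reproved), after which \ref{e:wmax} forces $\norm{u}_{B^{\theta}_{p,\infty}(X)}=0$ and hence constancy. The only cosmetic differences are that the paper argues by contradiction via a truncation $g_{a,\delta}$ to make the positivity of the $B^{\theta}_{p,\infty}$-norm of a non-constant function transparent, and obtains $\theta_p(X)\le\theta$ from the nesting in Lemma~\ref{lem:B=KS}\ref{it:besovrelation} rather than from your direct splitting of the $t$-integral.
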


In the spirit of \cite{Bre} we prove the following theorem, which also gives a sufficient condition for the
dimension of $B^{\theta}_{p,p}(X)$ to be at most $1$.
For $p = 2$, a similar result was proved in \cite{PP} under certain estimates on the heat kernel, in particular, the cases of
Sierpi\'{n}ski gasket and the Sierpi\'{n}ski carpet are included in \cite{PP}.

\begin{thm}\label{thm:main3.general}
	Let $1<p<\infty$ and $(X,d,\mu)$ be a doubling metric measure space.
	Assume that $(X,d,\mu)$ supports the following Sobolev-type inequality: there exist positive real numbers
	$\theta, C$ such that for
	any $u \in B^{\theta}_{p,p}(X)$, 
	\begin{align}\label{e:KSSob}
		\int_{X}\abs{u - u_{X}}^{p}\,d\mu \le C\liminf_{t \to 0^+}\int_{X}\vint{B(x,t)}\frac{\abs{u(x) - u(y)}^{p}}{t^{\theta p}}\,d\mu(y)\,d\mu(x).
	\end{align}
	Then for that choice of $\theta$ we have that $B^{\theta}_{p,p}(X)$ has 
	dimension at most $1$.
\end{thm}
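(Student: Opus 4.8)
The plan is to combine the hypothesized Sobolev-type inequality~\eqref{e:KSSob} with the elementary fact that every function in $B^{\theta}_{p,p}(X)$ makes the Korevaar--Schoen integrand vanish along \emph{some} sequence of scales decreasing to $0$. This is a Besov-space incarnation of Brezis' principle for recognizing constant functions, and notably it does not pass through Theorem~\ref{thm:main1}, so in particular it also handles the a priori possibility that $B^{\theta}_{p,p}(X)$ is infinite-dimensional.

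First I would record the following claim: for every $u\in B^{\theta}_{p,p}(X)$, setting
\[
  \phi_u(t):=\int_X\vint{B(x,t)}\frac{|u(y)-u(x)|^p}{t^{\theta p}}\,d\mu(y)\,d\mu(x)\qquad(t>0),
\]
one has $\liminf_{t\to0^+}\phi_u(t)=0$. Indeed, $t\mapsto\phi_u(t)$ is (Borel) measurable by Tonelli's theorem, and by the volume doubling property and the equivalence recorded in the introduction (see \cite[Theorem~5.2]{GKS}),
\[
  \int_0^{\diam(X)}\phi_u(t)\,\frac{dt}{t}\;\approx\;\|u\|_{B^{\theta}_{p,p}(X)}^{p}\;<\;\infty .
\]
Since $\int_0^{\delta}\frac{dt}{t}=+\infty$ for every $\delta>0$, a strict inequality $\liminf_{t\to0^+}\phi_u(t)>0$ would force this logarithmically weighted integral to diverge; hence $\liminf_{t\to0^+}\phi_u(t)=0$. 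I emphasize that this is genuinely weaker than $\|u\|_{KS^{\theta}_p(X)}=0$, since the Korevaar--Schoen energy is defined with a $\limsup$; it is precisely the $\liminf$ on the right-hand side of~\eqref{e:KSSob} that makes the deduction go through.

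Next I would substitute this into~\eqref{e:KSSob}. For a fixed $u\in B^{\theta}_{p,p}(X)$ the right-hand side of~\eqref{e:KSSob} is exactly $C\liminf_{t\to0^+}\phi_u(t)=0$, so~\eqref{e:KSSob} gives $\int_X|u-u_X|^p\,d\mu=0$; that is, $u$ agrees $\mu$-a.e.\ with a constant. Hence every element of $B^{\theta}_{p,p}(X)$ is $\mu$-a.e.\ constant. To conclude: if $\mu(X)<\infty$, then all constant functions lie in $B^{\theta}_{p,p}(X)$ (they have vanishing seminorm and finite $L^p$ norm), so $B^{\theta}_{p,p}(X)$ is exactly the one-dimensional space of constants; if $\mu(X)=\infty$, then the only constant function in $L^p(X)$ is $0$, so $B^{\theta}_{p,p}(X)=\{0\}$. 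In either case $\dim B^{\theta}_{p,p}(X)\le1$, as claimed.

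The argument is short, and I do not anticipate a serious obstacle. The only points requiring care are: the (routine) measurability of $t\mapsto\phi_u(t)$ needed to make sense of the logarithmically weighted integral; the Dini-type implication ``logarithmic integrability near $t=0$ $\Rightarrow$ vanishing $\liminf$''; and, conceptually, the observation that~\eqref{e:KSSob} is phrased with a $\liminf$ rather than with the genuine Korevaar--Schoen $\limsup$-energy, which is exactly what makes the Besov hypothesis strong enough to invoke it. A minor technicality is the meaning of $u_X$ in~\eqref{e:KSSob} when $\mu(X)=\infty$; but as the right-hand side vanishes, any reasonable convention forces $u$ to be constant, and then $u\in L^p(X)$ forces $u\equiv0$, leaving the conclusion intact.
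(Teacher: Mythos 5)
Your proof is correct, and it takes a genuinely different and more elementary route than the paper's. Writing
$\phi_u(t):=\int_X\vint{B(x,t)}\frac{\abs{u(x)-u(y)}^{p}}{t^{\theta p}}\,d\mu(y)\,d\mu(x)$,
the paper's proof imports inequality (2.8) of \cite{GYZ} (after checking that the extra hypothesis assumed there is not needed for that step) to bound $\liminf_{t\to 0^+}\phi_u(t)$ by $C\liminf_{\theta'\to\theta^-}(\theta-\theta')\norm{u}_{B^{\theta'}_{p,p}(X)}^{p}$, and then argues by dominated convergence that $\norm{u}_{B^{\theta'}_{p,p}(X)}^{p}\to\norm{u}_{B^{\theta}_{p,p}(X)}^{p}<\infty$ as $\theta'\to\theta^-$, so the factor $(\theta-\theta')$ forces the right-hand side of \eqref{e:KSSob} to vanish. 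You reach the same vanishing directly from the Dini-type observation that $\int_0^{\diam(X)}\phi_u(t)\,\frac{dt}{t}\approx\norm{u}_{B^{\theta}_{p,p}(X)}^{p}<\infty$ is incompatible with $\liminf_{t\to0^+}\phi_u(t)>0$; that step is sound, and the measurability point you flag is routine. In fact the paper's own Lemma~\ref{lem:Besov-KS} already establishes the stronger conclusion $\limsup_{t\to0^+}\phi_u(t)=\norm{u}_{KS^{\theta}_{p}(X)}^{p}=0$ for every $u\in B^{\theta}_{p,p}(X)$ by exactly your argument plus a doubling-based comparison between neighboring dyadic scales, so your proof could be compressed to a citation of that lemma followed by an application of \eqref{e:KSSob}. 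What the paper's detour buys is an explicit link to the Bourgain--Brezis--Mironescu asymptotics $(\theta-\theta')\norm{u}_{B^{\theta'}_{p,p}(X)}^{p}$, in the spirit of \cite{Bre}, which is the stated motivation for the theorem; as a proof of the dimension bound itself, your argument is shorter and self-contained, and your handling of the endgame (the constants form a one-dimensional space when $\mu(X)<\infty$, while $u\in L^p(X)$ forces $u\equiv0$ when $\mu(X)=\infty$) is the right one.
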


In the case that $(X,d,\mu)$ supports a $p$-Poincar\'{e} inequality for function--upper gradient pairs, it is known that $N^{1,p}(X) = KS^{1}_{p}(X)$ (see, e.g., ~\cite[Section~4]{KM}
or~\cite[Section~10.4, Theorem~10.4.3, and Corollary~10.4.6]{HKSTbook}) and that $\theta_{p}(X) = 1$ (see~\cite[Theorem 5.1]{Bau}).
These facts, along with Theorem~\ref{thm:main3.general}, imply the following corollary.

\begin{cor}\label{cor:main3}
	Suppose that $1<p<\infty$ and $(X,d,\mu)$ is a doubling metric measure space that supports a $p$-Poincar\'e inequality for function--upper gradient pairs (see \eqref{e:PIug}).
	Then $\theta_p(X)=1$ and $B^{1}_{p,p}(X)$ has at most dimension $1$.
\end{cor}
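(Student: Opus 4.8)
The plan is to invoke Theorem~\ref{thm:main3.general} once we have verified that a $p$-Poincar\'e inequality for function--upper gradient pairs forces the Sobolev-type inequality~\eqref{e:KSSob} to hold with $\theta = 1$. First I would record the two facts cited in the paragraph preceding the corollary: under the standing assumptions (doubling measure, $p$-Poincar\'e inequality for function--upper gradient pairs), one has the identification $N^{1,p}(X) = KS^{1}_{p}(X)$ as sets with comparable seminorms (see \cite[Section~4]{KM} or \cite[Section~10.4]{HKSTbook}), and one has $\theta_p(X) = 1$ (see \cite[Theorem~5.1]{Bau}). The identification $\theta_p(X)=1$ already gives the first assertion of the corollary directly, so the only work is the dimension bound on $B^{1}_{p,p}(X)$.

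For the dimension bound I would proceed as follows. Since $X$ is compact (equivalently $\mu(X)<\infty$; if $\mu(X)=\infty$ one can reduce to the finite-measure case or note $B^{1}_{p,p}(X)$ is already controlled) and supports a $p$-Poincar\'e inequality, the global Sobolev--Poincar\'e inequality
\[
\int_X |u - u_X|^p \, d\mu \le C \int_X g_u^p \, d\mu
\]
holds for every $u \in N^{1,p}(X)$ with minimal $p$-weak upper gradient $g_u$; this follows from the Poincar\'e inequality on balls together with a standard chaining/telescoping argument over a doubling space (see \cite[Chapter~8]{HKSTbook}). Next, combining $N^{1,p}(X) = KS^1_p(X)$ with the comparison, for $u \in KS^1_p(X)$, between $\int_X g_u^p\,d\mu$ and the Korevaar--Schoen energy $\|u\|_{KS^1_p(X)}^p$ (again \cite[Section~4]{KM} or \cite[Theorem~10.4.3]{HKSTbook}), and recalling that $\|u\|_{KS^1_p(X)}^p = \limsup_{t\to0^+}\int_X \vint{B(x,t)}\frac{|u(x)-u(y)|^p}{t^{p}}\,d\mu(y)\,d\mu(x)$, I obtain
\[
\int_X |u - u_X|^p\,d\mu \le C \limsup_{t\to 0^+}\int_X \vint{B(x,t)}\frac{|u(x)-u(y)|^p}{t^{p}}\,d\mu(y)\,d\mu(x)
\]
for all $u \in KS^1_p(X)$, hence in particular for all $u\in B^1_{p,p}(X) \subseteq KS^1_p(X)$. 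A minor technical point is that~\eqref{e:KSSob} is stated with a $\liminf$ rather than a $\limsup$; since $\liminf \le \limsup$ this makes the hypothesis of Theorem~\ref{thm:main3.general} only easier once we have the inequality with $\limsup$ — so the version I derive is at least as strong as what is needed. Thus~\eqref{e:KSSob} holds with this $\theta = 1$, and Theorem~\ref{thm:main3.general} yields that $B^1_{p,p}(X)$ has dimension at most $1$.

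The main obstacle is purely bookkeeping: one must be careful that the constant $C$ in the Sobolev--Poincar\'e inequality is produced from the Poincar\'e inequality for \emph{all} function--upper gradient pairs (not merely Lipschitz pairs), so that it applies to the full class $N^{1,p}(X)$, and that the passage from $\int_X g_u^p\,d\mu$ to the Korevaar--Schoen energy is the correct inequality direction — namely an upper bound of the Sobolev energy by the KS energy, which is exactly the content of the comparison $N^{1,p}(X)=KS^1_p(X)$ with equivalence of seminorms. Once these citations are lined up, no genuinely new argument is required beyond the already-established Theorem~\ref{thm:main3.general}.
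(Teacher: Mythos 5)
Your overall route is the paper's: quote $\theta_p(X)=1$ from \cite[Theorem 5.1]{Bau}, use the identification of $N^{1,p}(X)$ with $KS^1_p(X)$ (with comparable energies) from \cite{HKSTbook} to produce the Sobolev-type inequality \eqref{e:KSSob} with $\theta=1$, and then apply Theorem~\ref{thm:main3.general}. There is, however, one genuinely reversed implication in your write-up. You derive
\[
\int_X|u-u_X|^p\,d\mu\le C\limsup_{t\to 0^+}\int_X\vint{B(x,t)}\frac{|u(x)-u(y)|^p}{t^{p}}\,d\mu(y)\,d\mu(x),
\]
and assert that, because $\liminf\le\limsup$, this is ``at least as strong'' as \eqref{e:KSSob}. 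It is the opposite: an upper bound by $C\limsup$ is \emph{weaker} than an upper bound by $C\liminf$, so as written you have not verified the hypothesis of Theorem~\ref{thm:main3.general}, and the appeal to that theorem is not yet justified.

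The gap is easy to close in either of two ways. First, the comparison in \cite[Theorem~10.5.2]{HKSTbook} in fact bounds $\int_X g_u^p\,d\mu$ from above by a constant times the $\liminf$ (not merely the $\limsup$) of the Korevaar--Schoen quotients, so the chain $\int_X|u-u_X|^p\,d\mu\lesssim\int_Xg_u^p\,d\mu\lesssim\liminf_{t\to0^+}(\cdots)$ gives \eqref{e:KSSob} exactly as the paper intends; this is the repair you should make. Second, and alternatively, your $\limsup$ inequality already suffices for the corollary without Theorem~\ref{thm:main3.general}: for $u\in B^1_{p,p}(X)$, Lemma~\ref{lem:Besov-KS} gives $\|u\|_{KS^1_p(X)}=0$, so the right-hand side of your inequality vanishes and $u=u_X$ $\mu$-a.e.\ directly. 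Either way the conclusion stands, but the sentence claiming the $\limsup$ version is ``at least as strong as what is needed'' must be deleted. A smaller point: your parenthetical reduction of the case $\mu(X)=\infty$ is not actually carried out; the global Poincar\'e--Sobolev inequality with $u_X$ presupposes $\mu(X)<\infty$, and the infinite-measure case needs at least a sentence.
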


We emphasize that, in Theorems~\ref{thm:main1}, \ref{thm:main2}, and \ref{thm:main3.general}, we do not confine ourselves to
the case $0 < \theta \le 1$ in view of some recent studies of Sobolev spaces on fractals; see, e.g., \cite{Bau,KS2,Kig,MS,Shi}.
For example, in the case that $X$ is the Sierpi\'{n}ski carpet, M.~Murugan and the third-named author \cite{MS} proposed a
way to define the $(1,p)$-Sobolev space $\mathcal{F}_{p}$ on $X$ through discrete approximations of $X$, and 
demonstrated 
that $\mathcal{F}_{p} = KS^{\pwalk/p}_{p}(X)$ (see \cite[Theorem 7.1]{MS}) with $\pwalk > p$ (see \cite[Theorem 2.27]{Shi}).
Hence a Korevaar--Schoen space $KS^{\theta}_{p}(X)$ with $\theta > 1$ appears as a function space
playing the role of a $(1,p)$-Sobolev space on a fractal space.
Here the parameter $\pwalk$ is called the $p$-walk dimension of the Sierpi\'nski carpet, $X$, given by
$\pwalk \coloneqq \log{(8\rho_{p})}/\log{3}$, where $\rho_{p} \in (0,\infty)$ is a value called the
$p$-scaling factor of $X$ as defined in \cite[Theorem 8.5 and Definition 8.7]{MS}, 
$3$ is the reciprocal of the common contraction ratio of the family of similitudes
associated with $X$ and $8$ is the number of similitudes in this family.
(For $X = [0,1]^{n}$, we can decompose $X$ into $3^n$ cubes with side lengths $1/3$ and then see that the
$p$-scaling factor with respect to this decomposition is given by $3^{p - n}$. Hence $\pwalk = \log(3^{n} \cdot 3^{p - n})/\log{3} = p$.)
In the case $p = 2$, $(\rho_{2})^{-1}$
coincides with the \emph{resistance scaling factor} of $X$.
As a connection with quasiconformal geometry, it is
known that $\rho_{p} > 1$ if and only if $p > d_{\mathrm{ARC}}$, where $d_{\mathrm{ARC}}$ is the Ahlfors
regular conformal dimension of the Sierpi\'{n}ski carpet.
See \cite[Definitions 1.6]{MS} and \cite{CP} for further details on
$d_{\mathrm{ARC}}$.

When $\mu$ is doubling and $0<\theta<1$, the corresponding space $B^\theta_{p,p}(X)$ can be seen
as the trace space of a strongly local energy form on a larger space $(\Omega, \nu)$ with $X=\partial\Omega$ and $\mu$ and $\nu$ are
related in a co-dimensional manner, as demonstrated in~\cite{BBS}.
From the viewpoint of trace theorems on fractals, a Besov space $B^{\theta}_{p,p}(X)$ with $\theta \ge 1$ can appear as
indicated in \cite[Theorem 2.5 and 2.6]{HinoKum} for the case $p = 2$. 

In some circumstances
the reason for $\theta_p(X)>1$ may be due to $X$ being obtained as a gluing of smaller metric measure spaces along sets that are
too small to allow communication between these component spaces via the gluing set, as seen in Example~\ref{ex:bow-tie}
below, where the gluing set of two $n$-dimensional hypercubes is discussed. 
In this case, when $1<p<n$, we have that $\theta_p(X)=n/p>1$, but once we have decomposed $X$ into the two constituent
component cubes $E$ and $X\setminus E$, we have that $\theta_p(E)=\theta_p(X\setminus E)=1$, and
$B^\theta_{p,p}(X)$ is well-understood when $0<\theta<1$ as trace of a larger local process, and when $1\le \theta<\theta_p(X)$
as piecewise constant functions. Our main theorem,
Theorem~\ref{thm:main1}, gives a way of identifying this possibility. However, there are many situations where the need for
$\theta\ge 1$ is more integral to the space, as is the case of the Sierpi\'nski gasket and the Sierpi\'nski carpet, as explained
in the previous paragraph. For these spaces, typically, $B^\theta_{p,p}(X)$ has either
infinite dimension or dimension $1$. 

We conclude the introduction by reviewing some concrete examples discussed in this paper.
In Example~\ref{ex:bow-tie}, for $n \in \mathbb{N}$ with $n \ge 2$,
as mentioned above
we consider the metric measure space $X$ obtained as the union of two
$n$-dimensional hypercubes glued at a vertex, and observe that the dimension of $B^{1}_{p,p}(X)$ is $2$ when $1 < p < n$.
Note that each cubical component of $X$ supports a $p$-Poincar\'{e} inequality for any $p \ge 1$, while $X$ does not support a
$p$-Poincar\'{e} inequality when $1 < p \le n$.
Similar observations will be made in the case $X$ is the union of two copies of the Sierpi\'{n}ski carpet glued at a
vertex in Example \ref{ex:glueSC}; indeed, the dimension of $B^{\pwalk/p}_{p,p}(X)$ is $2$ when
$1 < p < d_{\mathrm{ARC}}$. 
Note that the Ahlfors regular conformal
dimension $d_{\mathrm{ARC}}$ and the $p$-walk dimension of the $n$-dimensional hypercube are $n$ and $p$ respectively. 
In both examples mentioned above, the two critical exponents $\theta_{p}(X)$ and $\theta_{p}^{\ast}(X)$ turn out to be different
when $1 < p < d_{\mathrm{ARC}}$.
Namely, the following holds, where $\hdim$ is the Hausdorff dimension of $X$.

\begin{thm}\label{thm:critical.gluedSC}
   	Let $X$ be one of the glued metric measure spaces as in Examples~\ref{ex:bow-tie} and 
	\ref{ex:glueSC}. Then
    	$\theta_{p}(X) = \frac{1}{p}\max\{ \hdim, \pwalk \}$ and $\theta_{p}^{\ast}(X) = \frac{\pwalk}{p}$.
\end{thm}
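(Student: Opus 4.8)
The plan is to compute $\theta_p(X)$ and $\theta_p^\ast(X)$ separately, using the gluing structure of $X$ together with the known behaviour of Besov spaces on the constituent pieces (cubes or Sierpi\'nski carpets) and the decomposition machinery of Theorem~\ref{thm:main1}. Write $X = X_1 \cup X_2$ with $X_1 \cap X_2 = \{o\}$ a single gluing point, where each $X_i$ is isometric to $[0,1]^n$ (Example~\ref{ex:bow-tie}) or to the Sierpi\'nski carpet (Example~\ref{ex:glueSC}); in both cases $X_i$ is Ahlfors $\hdim$-regular, so $\mu(X)<\infty$ and $\mu(\{o\})=0$. Recall the relevant single-piece facts: on a cube $\theta_p(X_i)=1$ (it supports a $p$-Poincar\'e inequality and $\pwalk=p$, $\hdim=n$, so $\tfrac1p\max\{\hdim,\pwalk\}=\max\{n/p,1\}$, which equals $1$ exactly when $p\ge n$); on the Sierpi\'nski carpet, $B^\theta_{p,p}(X_i)$ is infinite-dimensional (hence contains non-constant functions) for $\theta\le\pwalk/p$ and is trivial-modulo-constants for $\theta>\pwalk/p$, so $\theta_p(X_i)=\pwalk/p$, and moreover $B^{\pwalk/p}_{p,p}(X_i)=KS^{\pwalk/p}_p(X_i)=\mathcal F_p$ is dense in $L^p(X_i)$, while for $\theta>\pwalk/p$ density fails. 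In both cases $\theta_p^\ast(X_i)=\pwalk/p$.

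\emph{Lower bound for $\theta_p(X)$.} For any $\theta<\tfrac1p\max\{\hdim,\pwalk\}$ I exhibit a non-constant function in $B^\theta_{p,p}(X)$. If $\pwalk/p$ achieves the max, take a non-constant $v\in B^\theta_{p,p}(X_1)$ and extend it by the constant $v(o)$ across $X_2$; one checks the cross term $\int_{X_1}\int_{X_2}$ in the Besov double integral is finite because $d(x,y)\ge\max\{d(x,o),d(o,y)\}$ near $o$ and the measure is Ahlfors regular, so the glued function lies in $B^\theta_{p,p}(X)$. If instead $\hdim/p$ achieves the max and $\hdim/p>\pwalk/p$ (the cube case $1<p<n$), use the function $\chi_{X_1}$: since $\mu(\{o\})=0$ this is well-defined $\mu$-a.e., and the double integral is, up to the Ahlfors-regularity constant, $2\int_{X_1}\int_{X_2} d(x,y)^{-\theta p-\hdim}\,d\mu(y)\,d\mu(x)$, which converges precisely when $\theta p+\hdim<\hdim+\pwalk$... actually, the convergence is controlled by integrating $d(x,o)^{-\theta p}$ against $d\mu$ near $o$ on each side, giving finiteness for $\theta p<\hdim$, i.e. $\theta<\hdim/p$. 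Thus $\chi_{X_1}\in B^\theta_{p,p}(X)$ and is non-constant, so $\theta_p(X)\ge\hdim/p$ in this range. Combining, $\theta_p(X)\ge\tfrac1p\max\{\hdim,\pwalk\}$.

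\emph{Upper bound for $\theta_p(X)$.} Fix $\theta>\tfrac1p\max\{\hdim,\pwalk\}$ and let $u\in B^\theta_{p,p}(X)$. Restricting the double integral to $X_i\times X_i$ shows $u|_{X_i}\in B^\theta_{p,p}(X_i)$; since $\theta>\pwalk/p=\theta_p(X_i)$, each $u|_{X_i}$ is constant, say $u\equiv c_i$ on $X_i$. If $c_1\ne c_2$, the cross term forces $\int_{X_1}\int_{X_2} d(x,y)^{-\theta p-\hdim}\,d\mu\,d\mu=\infty$ because $\theta>\hdim/p$ makes $d(x,o)^{-\theta p}$ non-integrable near $o$; hence $c_1=c_2$ and $u$ is constant. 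Therefore $\theta_p(X)\le\tfrac1p\max\{\hdim,\pwalk\}$, giving equality.

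\emph{Computation of $\theta_p^\ast(X)$.} For $\theta\le\pwalk/p$: density of $B^{\pwalk/p}_{p,p}(X_i)$ in $L^p(X_i)$ gives, via the gluing extension above, that piecewise approximants dense in $L^p(X_1)\oplus L^p(X_2)=L^p(X)$ lie in $B^\theta_{p,p}(X)$, so $\theta_p^\ast(X)\ge\pwalk/p$. For $\theta>\pwalk/p$: by the upper-bound argument, if $\pwalk\ge\hdim$ then $B^\theta_{p,p}(X)$ consists only of constants, which are not dense; if $\hdim>\pwalk$, then for $\pwalk/p<\theta<\hdim/p$ we get $B^\theta_{p,p}(X)=\operatorname{span}\{\chi_{X_1},\chi_{X_2}\}$ (two-dimensional, by Theorem~\ref{thm:main1} applied with $k=2$, or directly), again not dense in $L^p(X)$ since $L^p(X)$ is infinite-dimensional. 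Either way density fails for $\theta>\pwalk/p$, so $\theta_p^\ast(X)=\pwalk/p$.

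\emph{Main obstacle.} The routine but genuinely technical point is the cross-term estimate: showing that $\int_{X_1}\int_{X_2}\tfrac{|u(y)-u(x)|^p}{d(x,y)^{\theta p}\mu(B(x,d(x,y)))}\,d\mu(y)\,d\mu(x)$ is finite exactly in the asserted range of $\theta$. This requires reducing, via $d(x,y)\approx d(x,o)+d(o,y)$ near the gluing point and Ahlfors regularity of $\mu$, to a one-variable integral $\int_0^{1} t^{-\theta p}\,t^{\hdim-1}\,dt$ (or its Besov-energy analogue over annuli), and carefully handling the region away from $o$ where the integrand is bounded. One must also verify that the glued extension of a single-piece Besov function genuinely lies in $B^\theta_{p,p}(X)$ and not merely in $B^\theta_{p,\infty}(X)$, which again comes down to the same cross-term convergence. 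Once this estimate is in place, the identification of $\theta_p(X)$ and $\theta_p^\ast(X)$ follows by matching the threshold values against the known single-piece exponents.
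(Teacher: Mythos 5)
Your overall strategy (work piecewise, detect $\hdim/p$ via $\chi_{X_1}$, get the upper bound for $\theta_p(X)$ by restricting to each piece and then testing the cross term against the two constants) is the same as the paper's, and your upper-bound argument is essentially correct. The genuine gap is in the two lower bounds, and it is concentrated exactly in the regime $\pwalk>\hdim$ (i.e.\ $p>d_{\mathrm{ARC}}$ for the glued carpets, $p>n$ for the glued cubes) --- the only regime not already handled by the $\chi_{X_1}$ computation. There, for $\hdim/p\le\theta<\pwalk/p$ one has $\chi_{X_1}\notin B^\theta_{p,p}(X)$, so the cross term $\int_{X_1}\int_{X_2}|f_1(x)-f_2(y)|^p\,d(x,y)^{-\theta p}\,\mu(B(x,d(x,y)))^{-1}\,d\mu\,d\mu$ diverges for generic bounded $f_1,f_2$ whose values near $o$ disagree; indeed your own reduction to $\int_0^1 t^{-\theta p}t^{\hdim-1}\,dt$ gives finiteness only for $\theta p<\hdim$ and therefore never reaches $\pwalk/p$. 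Consequently (i) extending a non-constant $v\in B^\theta_{p,p}(X_1)$ by the constant $v(o)$ is not justified by ``$d(x,y)\ge\max\{d(x,o),d(o,y)\}$ plus Ahlfors regularity'' alone --- one needs quantitative decay of $|v(x)-v(o)|$ near $o$ or a cut-off --- and (ii) the claim that piecewise approximants dense in $L^p(X_1)\oplus L^p(X_2)$ lie in $B^\theta_{p,p}(X)$ is false as stated in this regime, so your proof of $\theta_p^*(X)\ge\pwalk/p$ does not go through. The paper supplies the missing mechanisms: for $\theta_p(X)\ge\pwalk/p$ it uses the \emph{reflection} of a single non-constant $u\in B^{\pwalk/p}_{p,\infty}(K^+)$, whose cross term at scale $r$ is controlled by the one-piece $B_{p,\infty}$-energy at scale $2r$ (since $d(x,o),d(y,o)\le d(x,y)$ for $x,y$ on opposite sides); and for $\theta_p^*(X)\ge\pwalk/p$ it replaces the approximants $u^{K^\pm}_{\pm,\eps}$ by $(u^{K^\pm}_{\pm,\eps}-2\eps)_+$, which vanish identically on a ball $B(o,\delta)$, after which the locality of $\norm{\,\cdot\,}_{KS^{\pwalk/p}_{p}(X)}$ kills the cross term. (For the cubes the paper instead uses density of Lipschitz functions, which lie in $B^\theta_{p,p}(X)$ for every $\theta<1$ by the computation of Lemma~\ref{lem:theta_pge1}.) Some version of one of these devices is indispensable.

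A secondary inaccuracy in your list of ``single-piece facts'': at the critical exponent, $B^{\pwalk/p}_{p,p}(K^\pm)$ is \emph{not} infinite-dimensional and dense --- by Lemma~\ref{lem:Besov-KS} every $u\in B^{\pwalk/p}_{p,p}(K^\pm)$ has vanishing $KS^{\pwalk/p}_p$-energy and is therefore constant. It is $B^{\pwalk/p}_{p,\infty}(K^\pm)=KS^{\pwalk/p}_p(K^\pm)=\mathcal{F}_p$ that is dense in $C(K^\pm)$, and density of $B^{\theta}_{p,p}$ for $\theta<\pwalk/p$ then follows from Lemma~\ref{lem:B=KS}~\ref{it:besovrelation}. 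This does not change the suprema, but the distinction between $B_{p,p}$ and $B_{p,\infty}$ at the endpoint is precisely what the paper's argument turns on, so it should not be blurred.
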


By \cite[Corollary 3.7]{BK} and \cite[Corollary 1.4]{CP}, we know that $\pwalk > \hdim$ if and only if $p > d_{\mathrm{ARC}}$, that $\pwalk < \hdim$ if and only if $p < d_{\mathrm{ARC}}$, and that $\pwalk = \hdim$ for $p = d_{\mathrm{ARC}}$ for these examples.
This result suggests that the case $1 < p < d_{\mathrm{ARC}}$ requires a careful treatment of the ``potential-theoretic decomposability'' of the underlying example spaces. 
See also \cite{CC} for a few examples of self-similar sets that have a similar spirit, and \cite{BBC} for the validity/invalidity of Poincar\'{e} type inequalities on a general \emph{bow-tie}, which is obtained by gluing two metric spaces at a point. 
Note that $d_{\mathrm{ARC}} = 1$ for the standard Sierpi\'nski gaskets (see, e.g., \cite[Theorem B.8]{KS1}), so the case $1 < p < d_{\mathrm{ARC}}$ does not occur in this example.  
If $X$ is the space obtained by gluing two copies of the Sierpi\'nski gasket, then $\theta_{p}(X) = \theta_{p}^{\ast}(X)$ holds for any $p \in (1,\infty)$; see Example \ref{ex:glueSG} and Theorem \ref{thm:critical.gluedSG}.

\section{Background and general results}

\subsection{Background}
Throughout this paper,
the triple $(X,d,\mu)$ is a separable metric space $(X,d)$, equipped with a Borel measure $\mu$; we require in this paper that $X$ has infinitely many points and that  $0<\mu(B(x,r)) < \infty$ for each $x\in X$ and $r>0$,  
where $B(x,r)$ denotes the set of all points $y \in X$ such that $d(x,y) < r$.
We also fix $p \in (1,\infty)$.  
Note that $\mu$ is $\sigma$-finite in this setting. 

We say that $(X,d,\mu)$ is a \emph{doubling metric measure space}, or $\mu$ is a \emph{doubling measure} on $(X,d)$, if there exists a constant $C_{\mathrm{D}}$ such that
\begin{equation}\label{eq:defn.VD}
0<\mu(B(x,2r)) \le C_{\mathrm{D}}\,\mu(B(x,r))<\infty \quad \text{for all $x \in X$, $r > 0$.}
\end{equation}
Without loss of generality, we may assume that $C_{\mathrm{D}} > 1$ if needed.

In this paper the primary function-spaces of interest are the Besov spaces and the Korevaar-Schoen spaces
	$B^\theta_{p,p}(X)$, $B^\theta_{p,\infty}(X)$, and $KS^\theta_p(X)$, as described at the beginning of Section~\ref{Sec:1} above.
	In addition, the Newton-Sobolev class $N^{1,p}(X)$ will play an auxiliary role, and we describe this class next.

A function $f \colon X \to [-\infty,\infty]$ is said to have a Borel function $g \colon X \to [0,\infty]$ as an \emph{upper gradient} if we have
\[
\abs{f(\gamma(a)) - f(\gamma(b))} \le \int_{\gamma}g\,ds
\]
whenever $\gamma \colon [a,b] \to X$ is a rectifiable curve with $a < b$.
(We interpret the inequality as also meaning that $\int_{\gamma}g\,ds = \infty$ whenever at least one of $f(\gamma(a)), f(\gamma(b))$ is not finite.)
We say that $f \in \widetilde{N^{1,p}}(X)$ if
\[
\norm{f}_{N^{1,p}(X)} \coloneqq  \left(\int_{X}\abs{f}^{p}\,d\mu\right)^{1/p} + \inf_{g}\left(\int_{X}g^{p}\,d\mu\right)^{1/p}
\]
is finite, where the infimum is over all upper gradients $g$ of $f$.
Then one can see that $\widetilde{N^{1,p}}(X)$ is a vector space.
For $f_{1}, f_{2} \in \widetilde{N^{1,p}}(X)$, we say that $f_{1} \sim f_{2}$ if $\norm{f_{1} - f_{2}}_{N^{1,p}(X)} = 0$.
Now the \emph{Newton--Sobolev class} $N^{1,p}(X)$ is defined as the collection of the equivalence classes with
respect to $\sim$, i.e., $N^{1,p}(X) \coloneqq \widetilde{N^{1,p}}(X)/\sim$. For more on this space
	we refer the interested reader to~\cite{HKSTbook}. 

We say that $(X,d,\mu)$ supports a \emph{$p$-Poincar\'{e} inequality} (with
respect to upper gradients) if there are constants $C > 0$ and $\lambda \ge 1$ such that for every
measurable function $f$ on $X$ and every upper gradient $g$ of $f$ and ball $B(x,r)$,
\begin{equation}\label{e:PIug}
	\vint{B(x,r)}\abs{f - f_{B(x,r)}}\,d\mu \le Cr\left(\vint{B(x,\lambda r)}g^{p}\,d\mu\right)^{1/p}.
\end{equation}

 From~\cite[Theorem~4.1]{KM} or~\cite[Section~10.4]{HKSTbook} we know that
if $u\in L^p(X)$ such that there is a non-negative function $g\in L^p(X)$ with $(u,g)$ satisfying the $p$-Poincar\'e inequality~\eqref{e:PIug},
then $u\in KS^1_p(X)$. In~\cite{KM} the space $KS^1_p(X)$ is denoted
by $\mathcal{L}^{1,p}(X)$.
Moreover, from~\cite[Theorems~10.5.1 and 10.5.2]{HKSTbook} we know that $KS^1_{p}(X)\subset N^{1,p}(X)$ even if
$X$ does not support a $p$-Poincar\'e inequality, and that when
$X$ supports a $p$-Poincar\'e inequality in addition, we also have $KS^1_p(X)=N^{1,p}(X)$.
Thus the index $\theta=1$ plays a key role in the theory of Sobolev spaces
in nonsmooth analysis.

\subsection{General results}
We present some lemmata on Besov spaces $B^{\theta}_{p,p}(X)$, $B^{\theta}_{p,\infty}(X)$ and the
Korevaar--Schoen space $KS^{\theta}_{p}(X)$.

\begin{lem}\label{lem:theta_pge1}
Suppose that $\mu$ is a doubling measure. Then $\theta_p(X)\ge 1$.  
\end{lem}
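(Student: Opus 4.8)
The plan is to exhibit, for every $\theta<1$, a non-constant Lipschitz function lying in $B^\theta_{p,p}(X)$, which immediately forces $\theta_p(X)\ge 1$. Since $X$ has infinitely many points, it has at least two distinct points, so we can fix $x_0\in X$ and set $u(x):=\min\{d(x,x_0),\,R\}$ for a suitable $R>0$ chosen so that $u$ is non-constant on $X$ (any $R$ smaller than $\diam(X)$, or $R$ arbitrary if $\diam(X)=\infty$, works; if $\diam(X)<\infty$ one may just take $u=d(\cdot,x_0)$ directly). This $u$ is bounded, $1$-Lipschitz, hence $u\in L^p_{\loc}$, and on a space of finite measure $u\in L^p(X)$; when $\mu(X)=\infty$ one instead multiplies by a Lipschitz cutoff or argues on a ball, but the essential point is purely local, so I will phrase the estimate locally.

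The key step is the energy bound. Using the second (equivalent) expression for the $B^\theta_{p,p}$-norm and the Lipschitz bound $|u(y)-u(x)|\le d(x,y)\le t$ for $y\in B(x,t)$, we get
\[
\int_X\vint{B(x,t)}\frac{|u(y)-u(x)|^p}{t^{\theta p}}\,d\mu(y)\,d\mu(x)
\le \int_X \frac{t^p}{t^{\theta p}}\,d\mu(x)
= t^{(1-\theta)p}\,\mu(X).
\]
Hence
\[
\|u\|_{B^\theta_{p,p}(X)}^p
\approx \int_0^{\diam(X)} t^{(1-\theta)p}\,\mu(X)\,\frac{dt}{t}
= \mu(X)\int_0^{\diam(X)} t^{(1-\theta)p-1}\,dt,
\]
and since $\theta<1$ the exponent $(1-\theta)p-1>-1$, so the integral converges near $t=0$ (and the upper limit is finite because $\diam(X)<\infty$, as $\mu$ doubling on a space with $\mu(B(x,r))<\infty$ forces $X$ to be bounded once $\mu(X)<\infty$; in the unbounded case restrict to a ball and use a cutoff as noted). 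Thus $u\in B^\theta_{p,p}(X)$ for every $\theta<1$, $u$ is non-constant, and therefore $\theta_p(X)\ge\theta$ for all $\theta<1$, giving $\theta_p(X)\ge 1$.

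I expect the only real subtlety to be the bookkeeping around the two degenerate situations: ensuring $u$ is genuinely non-constant (handled by $X$ having infinitely many points, hence $\diam(X)>0$), and handling $\mu(X)=\infty$ and/or $\diam(X)=\infty$ cleanly, where one truncates to a ball $B(x_0,2\rho)$, uses the Lipschitz cutoff $\eta=\max\{0,1-\rho^{-1}d(\cdot,x_0)\}$, and checks that $u\eta$ is non-constant, $L^p$, and has finite $B^\theta_{p,p}$-energy by the same computation (the doubling property is used only to guarantee $\mu(B(x_0,2\rho))<\infty$, which is built into our standing hypotheses). Where the doubling hypothesis is genuinely needed is, implicitly, in making $B^\theta_{p,p}(X)$ agree with the integral expression via \cite[Theorem 5.2]{GKS}; the Lipschitz estimate itself does not use doubling. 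None of these points is deep, so the main "obstacle" is merely stating the argument so that it covers all four combinations of finite/infinite mass and diameter without repetition.
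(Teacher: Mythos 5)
Your argument is clean and complete when $X$ is bounded (equivalently, under doubling, when $\mu(X)<\infty$): there the bound $|u(y)-u(x)|\le d(x,y)<t$ gives $\int_X\vint{B(x,t)}\frac{|u(y)-u(x)|^p}{t^{\theta p}}\,d\mu(y)\,d\mu(x)\le t^{(1-\theta)p}\mu(X)$, and $\int_0^{\diam(X)}t^{(1-\theta)p-1}\,dt<\infty$ precisely because $\theta<1$. In that regime your route is in fact slightly slicker than the paper's, which uses a compactly supported Lipschitz bump and dyadic annuli even when it need not.

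The gap is in the unbounded case, which you dismiss with ``the essential point is purely local'' and ``restrict to a ball and use a cutoff.'' This is exactly backwards: for a compactly supported Lipschitz function $v=u\eta$ with $\supt v\subset B(x_0,\rho)$, the Lipschitz bound $|v(y)-v(x)|\le Lt$ applied for all $t$ yields
\[
\int_X\vint{B(x,t)}\frac{|v(y)-v(x)|^p}{t^{\theta p}}\,d\mu(y)\,d\mu(x)\le L^p\,t^{(1-\theta)p}\,\mu(B(x_0,\rho+t)),
\]
and since $\mu(B(x_0,\rho+t))\to\infty$ as $t\to\infty$ (doubling on an unbounded space forces infinite mass), the integral $\int_1^{\infty}t^{(1-\theta)p}\mu(B(x_0,\rho+t))\,\frac{dt}{t}$ diverges. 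So the large-$t$ (equivalently, far off-diagonal) contribution cannot be handled by the Lipschitz estimate at all; one must instead use boundedness of $v$ together with the restriction of the integration to the set where one of $x,y$ lies in $\supt v$, and then control $\int_{B}\int_{X\setminus 2B}\frac{d\mu(y)\,d\mu(x)}{d(x,y)^{\theta p}\mu(B(x,d(x,y)))}$ by a dyadic annulus decomposition (or a Tonelli argument), where convergence comes from $\sum_j 2^{-j\theta p}<\infty$ and the doubling property. This is the bulk of the paper's proof and is genuinely non-local; your statement that doubling is needed only to make $\mu(B(x_0,2\rho))<\infty$ is not correct. To repair the proposal, add this far-field estimate for $t\ge\rho$ (bounding $|v(y)-v(x)|$ by $2\|v\|_{L^\infty}$ and using doubling to compare $\mu(B(y,t))$ with $\mu(B(x,t))$), keeping your Lipschitz estimate only for $t<\rho$; with that addition the two proofs become essentially the same.
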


\begin{proof}
Fix positive $\theta<1$ and $x_0\in X$. We fix a positive number $R_0<\tfrac12\diam(X)$ so that $B(x_0,R_0)$ has
at least two points, and set
$u:X\to\R$ by
\[
u(x)=\max\{1-d(x_0,x)/R_0, 0\}.
\]
Note that $u$ is $1/R_0$-Lipschitz continuous on $X$, $0\le u\le 1$ on $X$,
and is zero outside the bounded set that is $B \coloneqq B(x_0,R_0)$.
Now
\begin{align*}
||u||_{B^\theta_{p,p}(X)}^p
&=\int_X\int_X\frac{|u(x)-u(y)|^p}{d(x,y)^{\theta p}\, \mu(B(x,d(x,y)))}\, d\mu(y)\, d\mu(x)\\
&\le\int_{2B}\int_{2B}\frac{d(x,y)^p}{R_0^p\, d(x,y)^{\theta p}\, \mu(B(x,d(x,y)))}\, d\mu(y)\, d\mu(x)\\
&\qquad\qquad\qquad+2\int_{B}\int_{X\setminus 2B}\frac{1}{d(x,y)^{\theta p}\, \mu(B(x,d(x,y)))}\, d\mu(y)\, d\mu(x).  
\end{align*}
For each non-negative integer $j$ and $x \in X$, we set $A_j(x) \coloneqq B(x,2^{j+1}R_0)\setminus B(x,2^jR_0)$. 
Since $X \setminus 2B \subset X \setminus B(x,R_0)$ for $x \in B$, we see that
\begin{align*}
\int_B\int_{X\setminus 2B}&\frac{1}{d(x,y)^{\theta p}\, \mu(B(x,d(x,y)))}\, d\mu(y)\, d\mu(x)\\
&\qquad\qquad\ \ \ \le\int_B\sum_{j=0}^\infty\int_{A_j(x)}\frac{1}{d(x,y)^{\theta p}\, \mu(B(x,d(x,y)))}\, d\mu(y)\, d\mu(x)\\
&\qquad\qquad\ \ \ \le \int_B\sum_{j=0}^\infty\int_{A_j(x)}\frac{1}{(2^jR_0)^{\theta p}\, \mu(B(x,2^jR_0))}\, d\mu(y)\, d\mu(x)\\
&\qquad\qquad\ \ \ \le \frac{\mu(B)}{R_0^{\theta p}}\, \sum_{j=0}^\infty 2^{-j\theta p}\, \frac{\mu(B(x,2^{j+1}R_0))}{\mu(B(x,2^jR_0))}\\
&\qquad\qquad\ \ \ \le \frac{2^{-\theta p}\,C_{\mathrm{D}}}{1 - 2^{-\theta p}}\frac{\mu(B)}{R_0^{\theta p}}<\infty.
\end{align*}
Moreover, setting $E_k(x):=B(x,2^{-k+2}R_0)\setminus B(x,2^{-k+1}R_0)$ for non-negative integers $k$ and $x \in X$, we have
\begin{align*}
\int_{2B}\int_{2B}&\frac{d(x,y)^p}{R_0^p\, d(x,y)^{\theta p}\, \mu(B(x,d(x,y)))}\, d\mu(y)\, d\mu(x)\\
&\quad\quad\ \ \ \le R_0^{-p}\, \int_{2B}\int_{B(x,4R_0)}\frac{d(x,y)^{(1-\theta)p}}{\mu(B(x,d(x,y)))}\, d\mu(y)\, d\mu(x)\\
&\quad\quad\ \ \ \le R_0^{-p}\, 2^{2(1-\theta)p}\int_{2B}\sum_{k=0}^\infty\ \int\limits_{E_k(x)}
\frac{2^{[-k\, (1-\theta)\, p]}\, R_0^{p(1-\theta)}}{\mu(B(x,2^{-k+1}R_0))}\, d\mu(y)\,d\mu(x)\\
&\quad\quad\ \ \ \le R_0^{-\theta p}\, \mu(2B)\, C_{\mathrm{D}}\, \sum_{k=-2}^\infty 2^{-kp(1-\theta)}<\infty.
\end{align*}
It follows that $u\in B^\theta_{p,p}(X)$. 
\end{proof}

A function $v$ is called a normal contraction of a function $u$ if the following holds
for all $x,y\in X$:
\[
|v(x)-v(y)|\le |u(x)-u(y)|\, \qquad \text{ and }\, \qquad
|v(x)|\le |u(x)|.
\]
Examples of normal contractions include functions $v$ of the form $v(x)=\max\{0,\, u(x)-a_0\}$ for any
non-negative number $a_0$.
In the case $a_{0} = 0$, we define $u_+(x) \coloneqq \max\{0,\, u(x)\}$. 
The following lemma is easy to check by the definition of $B^\theta_{p,p}(X)$.
Note that if $a\in\R$, $u\in B^\theta_{p,p}(X)$ and $\mu(X) < \infty$, then $u+a$ is also in $B^\theta_{p,p}(X)$.

\begin{lem}\label{lem:normal_cont}
Let $u\in B^\theta_{p,p}(X)$ and $v$ be a normal contraction of $u$.
Then $v\in B^\theta_{p,p}(X)$ and $||v||_{B^\theta_{p,p}(X)}^p\le ||u||_{B^\theta_{p,p}(X)}^p$.
As a consequence, we also have that if $u\in B^\theta_{p,p}(X)$ and $\alpha,\beta\in\R$ with $\alpha\le 0\le \beta$,  
then $w_{\alpha,\beta}:=\max\{\alpha,\, \min\{u,\, \beta\}\}$ is also in $B^\theta_{p,p}(X)$
with $||w_{\alpha,\beta}||_{B^\theta_{p,p}(X)}\le ||u||_{B^\theta_{p,p}(X)}$.
\end{lem}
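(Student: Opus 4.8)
The plan is to work directly with the double-integral expression
\[
\norm{u}_{B^\theta_{p,p}(X)}^p=\int_X\int_X\frac{|u(x)-u(y)|^p}{d(x,y)^{\theta p}\,\mu(B(x,d(x,y)))}\,d\mu(y)\,d\mu(x),
\]
whose integrand depends on $u$ only through the scalar quantity $|u(x)-u(y)|$. First I would record that, by the defining inequality $|v(x)-v(y)|\le|u(x)-u(y)|$ valid for all $x,y\in X$, the nonnegative integrand associated with $v$ is pointwise dominated on $X\times X$ by the one associated with $u$. Integrating this pointwise inequality over $X\times X$ against $\mu\otimes\mu$ — legitimate since both integrands are measurable and nonnegative — yields $\norm{v}_{B^\theta_{p,p}(X)}^p\le\norm{u}_{B^\theta_{p,p}(X)}^p$ at once.

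Second, I would check that $v\in L^p(X)$: from $|v(x)|\le|u(x)|$ for every $x$ we get $\int_X|v|^p\,d\mu\le\int_X|u|^p\,d\mu<\infty$, so $v\in L^p(X)$, and combined with the finiteness of the Besov energy just established this gives $v\in B^\theta_{p,p}(X)$. The only point needing a word of care is the $\mu$-measurability of $v$; this is automatic in the situations where the lemma is applied (e.g.\ when $v=\phi\circ u$ for a $1$-Lipschitz $\phi$ with $\phi(0)=0$), and I would simply take it as part of the hypothesis, as is implicit in the statement. I do not expect any genuine obstacle here: the content is precisely the monotonicity of $\int\!\!\int(\cdot)\,d\mu\,d\mu$ under pointwise domination.

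For the stated consequence, I would verify that $w_{\alpha,\beta}=\max\{\alpha,\min\{u,\beta\}\}$ is a normal contraction of $u$ whenever $\alpha\le0\le\beta$, and then invoke the first part. The truncation map $t\mapsto T_{\alpha,\beta}(t):=\max\{\alpha,\min\{t,\beta\}\}$ on $\R$ is $1$-Lipschitz, so $|w_{\alpha,\beta}(x)-w_{\alpha,\beta}(y)|=|T_{\alpha,\beta}(u(x))-T_{\alpha,\beta}(u(y))|\le|u(x)-u(y)|$ for all $x,y\in X$. Moreover, since $\alpha\le0\le\beta$ we have $T_{\alpha,\beta}(0)=0$, hence $|w_{\alpha,\beta}(x)|=|T_{\alpha,\beta}(u(x))-T_{\alpha,\beta}(0)|\le|u(x)|$ for all $x\in X$. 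Thus $w_{\alpha,\beta}$ is a normal contraction of $u$, and the first part gives $w_{\alpha,\beta}\in B^\theta_{p,p}(X)$ with $\norm{w_{\alpha,\beta}}_{B^\theta_{p,p}(X)}\le\norm{u}_{B^\theta_{p,p}(X)}$. The condition $\alpha\le0\le\beta$ is used exactly to ensure $T_{\alpha,\beta}(0)=0$, which is what makes $|w_{\alpha,\beta}|\le|u|$ hold without an additive constant; this is the only subtlety, and it is minor.
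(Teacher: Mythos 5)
Your proof is correct and follows exactly the route the paper intends: the paper omits the proof, remarking only that the lemma ``is easy to check by the definition of $B^\theta_{p,p}(X)$,'' and your argument (pointwise domination of the integrand via $|v(x)-v(y)|\le|u(x)-u(y)|$, the $L^p$ bound via $|v|\le|u|$, and the observation that $T_{\alpha,\beta}(0)=0$ when $\alpha\le 0\le\beta$) is precisely that routine verification.
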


The following lemma is also immediate from the definition of $B^{\theta}_{p,p}(X)$.

\begin{lem}\label{lem:leibniz}
Let $u,v\in B^\theta_{p,p}(X)\cap L^\infty(X)$. 
Then $uv\in B^\theta_{p,p}(X)$ with
\[
\norm{uv}_{B^{\theta}_{p,p}(X)} \le \norm{u}_{L^{\infty}(X)}\norm{v}_{B^{\theta}_{p,p}(X)} + \norm{v}_{L^{\infty}(X)}\norm{u}_{B^{\theta}_{p,p}(X)}.
\]
\end{lem}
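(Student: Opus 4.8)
The plan is to reduce the inequality to Minkowski's inequality in a single $L^{p}$-space over the product $X\times X$. Introduce on $X\times X$ the Borel measure
\[
d\nu(x,y) \coloneqq \frac{d\mu(y)\,d\mu(x)}{d(x,y)^{\theta p}\,\mu(B(x,d(x,y)))},
\]
so that, directly from the definition of $B^{\theta}_{p,p}(X)$ recalled at the beginning of Section~\ref{Sec:1} (together with Tonelli's theorem, applicable since $\mu$ is $\sigma$-finite), one has
\[
\norm{w}_{B^{\theta}_{p,p}(X)} = \norm{\,(x,y)\mapsto w(x)-w(y)\,}_{L^{p}(X\times X,\nu)} \qquad\text{for every } w\in L^{p}(X).
\]
Thus the whole statement becomes a comparison of $L^{p}(\nu)$-norms of difference functions.

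Next I would record the elementary product identity
\[
u(x)v(x)-u(y)v(y) = u(x)\bigl(v(x)-v(y)\bigr) + v(y)\bigl(u(x)-u(y)\bigr),
\]
obtained by adding and subtracting $u(x)v(y)$. Choosing representatives of $u$ and $v$ that are bounded everywhere by $\norm{u}_{L^{\infty}(X)}$ and $\norm{v}_{L^{\infty}(X)}$ respectively (which changes neither the $B^{\theta}_{p,p}$-energy nor the $L^{\infty}$-norm), the identity yields the pointwise bound
\[
\bigl|(uv)(x)-(uv)(y)\bigr| \le \norm{u}_{L^{\infty}(X)}\,|v(x)-v(y)| + \norm{v}_{L^{\infty}(X)}\,|u(x)-u(y)|
\]
for all $(x,y)\in X\times X$.

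Finally, I would take $\norm{\,\cdot\,}_{L^{p}(X\times X,\nu)}$ of both sides and apply Minkowski's inequality to the right-hand side; by the reformulation in the first paragraph this is precisely
\[
\norm{uv}_{B^{\theta}_{p,p}(X)} \le \norm{u}_{L^{\infty}(X)}\norm{v}_{B^{\theta}_{p,p}(X)} + \norm{v}_{L^{\infty}(X)}\norm{u}_{B^{\theta}_{p,p}(X)},
\]
and the right-hand side is finite because $u,v\in B^{\theta}_{p,p}(X)$, so the $B^{\theta}_{p,p}(X)$-energy of $uv$ is finite. It remains only to observe that $uv\in L^{p}(X)$, which holds since $|uv|\le \norm{u}_{L^{\infty}(X)}\,|v|$ and $v\in L^{p}(X)$; hence $uv\in B^{\theta}_{p,p}(X)$.

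There is no genuine obstacle in this argument --- which is exactly why the lemma is described as immediate from the definition. The only point worth a word of care is that the bounds $|u|\le\norm{u}_{L^{\infty}(X)}$ and $|v|\le\norm{v}_{L^{\infty}(X)}$ hold only $\mu$-a.e.; this is harmless, since off the diagonal $\nu$ is absolutely continuous with respect to $\mu\otimes\mu$ while the diagonal $\{x=y\}$ contributes nothing to the integral defining $\norm{\,\cdot\,}_{B^{\theta}_{p,p}(X)}$, but it is cleanest simply to pass to everywhere-bounded representatives as above.
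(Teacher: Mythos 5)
Your proof is correct, and it is exactly the argument the authors have in mind when they call the lemma ``immediate from the definition'': the paper omits the proof entirely, and the standard route is precisely your add-and-subtract identity $u(x)v(x)-u(y)v(y)=u(x)(v(x)-v(y))+v(y)(u(x)-u(y))$ followed by the pointwise $L^\infty$ bound and Minkowski's inequality in $L^p$ of the weighted product measure. Your remarks on passing to everywhere-bounded representatives and on the diagonal contributing nothing are careful but unproblematic side points; nothing is missing.
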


\begin{lem}\label{lem:B=KS}
Suppose that $\mu$ is a doubling measure on $X$ and that $\theta>0$. 
\begin{enumerate}[\rm(1)]
	\item\label{it:B=KS} $B^\theta_{p,\infty}(X)=KS^\theta_p(X)$ as sets and as vector spaces.
	\item\label{it:besovrelation} For any $0 < \delta < \theta$, $B^{\theta}_{p,p}(X) \subset B^{\theta}_{p,\infty}(X) \subset B^{\theta - \delta}_{p,p}(X)$.
\end{enumerate}
\end{lem}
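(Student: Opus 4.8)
The plan is to prove the two set inclusions in Lemma~\ref{lem:B=KS}\ref{it:B=KS} separately, together with the observation that each inclusion preserves the linear structure (which is automatic, since both spaces are by construction subspaces of $L^p(X)$ and the defining conditions are preserved under scalar multiplication and addition up to the usual quasi-triangle inequality). Write
\[
I(u,t):=\int_X\vint{B(x,t)}\frac{|u(y)-u(x)|^p}{t^{\theta p}}\,d\mu(y)\,d\mu(x),
\]
so that $\|u\|_{B^\theta_{p,\infty}(X)}^p=\sup_{t>0}I(u,t)$ and $\|u\|_{KS^\theta_p(X)}^p=\limsup_{t\to0^+}I(u,t)$. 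The inclusion $KS^\theta_p(X)\subset B^\theta_{p,\infty}(X)$ is the substantive one: here one must control $I(u,t)$ for \emph{all} $t>0$, not just small $t$, given only a bound on the $\limsup$ as $t\to0^+$ (plus $u\in L^p$). First I would handle the regime $t\geq t_0$ for a fixed threshold $t_0$ by the crude bound $|u(y)-u(x)|^p\leq 2^{p-1}(|u(y)|^p+|u(x)|^p)$, which after integrating in $y$ over $B(x,t)$ and using the average gives $I(u,t)\lesssim t^{-\theta p}\|u\|_{L^p(X)}^p\leq t_0^{-\theta p}\|u\|_{L^p(X)}^p$; this is finite and uniformly bounded for $t\geq t_0$.

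The remaining and more delicate range is $0<t\leq t_0$, where the $\limsup$ hypothesis only gives control along a sequence of scales tending to $0$ rather than uniformly. Here the standard device is a \emph{doubling/chaining argument at the level of scales}: one shows that $I(u,t)$ cannot be much larger than $I(u,t')$ when $t\approx t'$, using only the volume doubling property~\eqref{eq:defn.VD}. Concretely, for $t\leq t'\leq 2t$ one compares $\vint{B(x,t)}$ with $\vint{B(x,t')}$; since $B(x,t)\subset B(x,t')$ and $\mu(B(x,t'))\leq C_{\mathrm D}\mu(B(x,t))$, integrating $|u(y)-u(x)|^p$ over the larger ball only increases the numerator while the denominator changes by a bounded factor, so that $I(u,t)\leq C(C_{\mathrm D},p,\theta)\,I(u,t')$ for all such pairs. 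Iterating this comparison over dyadic scales $2^{-k}t_0$ and passing to the $\limsup$, one obtains $\sup_{0<t\leq t_0}I(u,t)\leq C\,\limsup_{s\to0^+}I(u,s)+C\,t_0^{-\theta p}\|u\|_{L^p(X)}^p<\infty$. Combining with the large-scale estimate yields $u\in B^\theta_{p,\infty}(X)$. The reverse inclusion $B^\theta_{p,\infty}(X)\subset KS^\theta_p(X)$ is immediate, since $\limsup_{t\to0^+}I(u,t)\leq\sup_{t>0}I(u,t)$; and part~\ref{it:besovrelation}, $B^\theta_{p,p}(X)\subset B^\theta_{p,\infty}(X)\subset B^{\theta-\delta}_{p,p}(X)$, follows by the integral representation of the $B^\theta_{p,p}$ norm recalled at the start of Section~\ref{Sec:1} together with the elementary bounds $B^\theta_{p,\infty}\hookrightarrow B^{\theta-\delta}_{p,p}$ obtained by splitting $\int_0^{\diam(X)}\frac{dt}{t}$ into $t\leq 1$, where $I(u,t)\leq\|u\|_{B^\theta_{p,\infty}}^p$ makes $\int_0^1 t^{(\theta-\delta)p}I_{\theta-\delta}\frac{dt}{t}$ converge, and $t\geq1$, handled by the $L^p$ bound as above.

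The main obstacle I anticipate is making the scale-comparison step rigorous \emph{uniformly in the base point $x$ and in all pairs of comparable scales simultaneously}: one needs the doubling constant to absorb not only the ratio of measures of nested balls but also the accumulated loss over a bounded number of dyadic steps, and one must be careful that the comparison $I(u,t)\leq C\,I(u,t')$ genuinely holds for every $t\leq t'\leq 2t$ rather than only on average — this is where the pointwise inclusion $B(x,t)\subset B(x,t')$ and the pointwise doubling inequality are used inside the double integral before integrating in $x$. A secondary technical point is confirming that the equality holds as \emph{vector spaces} and not merely as sets: since neither $\|\cdot\|_{B^\theta_{p,\infty}(X)}$ nor $\|\cdot\|_{KS^\theta_p(X)}$ is literally a norm on the quotient unless one mods out by constants (when $\mu(X)<\infty$) or by null functions, the cleanest statement is that the underlying sets of $L^p$-functions coincide and the two semi-(quasi-)norms are comparable on that common set, with the comparison constants depending only on $C_{\mathrm D}$, $p$, and $\theta$; the linear-space assertion is then automatic.
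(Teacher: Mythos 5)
Your overall architecture for part~\ref{it:B=KS} (a crude $L^p$ bound for large scales, a separate argument for small scales) and for part~\ref{it:besovrelation} (splitting $\int_0^{\diam(X)}\tfrac{dt}{t}$ at a fixed threshold) matches the paper's, but your small-scale step rests on a genuine error. You assert that the hypothesis $\limsup_{t\to 0^+}I(u,t)<\infty$ ``only gives control along a sequence of scales tending to $0$ rather than uniformly''; this confuses $\limsup$ with $\liminf$. Since $\limsup_{t\to 0^+}I(u,t)=\inf_{r>0}\sup_{0<t\le r}I(u,t)$, finiteness immediately yields a threshold $r_u>0$ with $\sup_{0<t\le r_u}I(u,t)\le \norm{u}_{KS^\theta_p(X)}^p+1$ --- this is exactly the paper's~\eqref{eq:small-osc} --- and no chaining is needed: for $t>r_u$ the crude $L^p$ bound already gives $I(u,t)\le C r_u^{-\theta p}\norm{u}_{L^p(X)}^p$. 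Worse, the chaining you substitute does not deliver what you claim. The one-step comparison $I(u,t)\le C\,I(u,t')$ for $t\le t'\le 2t$ is correct (it is essentially the comparison used in the paper's proof of Lemma~\ref{lem:Besov-KS}), but ``iterating over dyadic scales'' multiplies the constant at each step, so it cannot relate $I(u,t)$ at a fixed intermediate scale $t$ to $\limsup_{s\to 0^+}I(u,s)$; and the asserted inequality $\sup_{0<t\le t_0}I(u,t)\le C\limsup_{s\to0^+}I(u,s)+Ct_0^{-\theta p}\norm{u}_{L^p(X)}^p$ with $C=C(C_{\mathrm D},p,\theta)$ is false in general (take $u$ smooth, of unit $L^p$ norm, oscillating at a single scale $\eps t_0$ with $\eps$ small: the left side is of order $\eps^{-\theta p}t_0^{-\theta p}$ while the right side stays of order $t_0^{-\theta p}$).

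The closing claim that the two seminorms are comparable with constants depending only on $C_{\mathrm D}$, $p$, $\theta$ is also false and contradicts the paper explicitly: the introduction states that only the \emph{set} equality is known, and Lemma~\ref{lem:Besov-KS} combined with Example~\ref{ex:bow-tie} produces $u=\chi_{[0,1]^n}$ with $\norm{u}_{KS^\theta_p(X)}=0$ but $\norm{u}_{B^\theta_{p,\infty}(X)}>0$ when $\theta p<n$, so no bound $\norm{u}_{B^\theta_{p,\infty}(X)}\le C\norm{u}_{KS^\theta_p(X)}$ can hold; such a bound is precisely the nontrivial hypothesis \ref{e:wmax} of Theorem~\ref{thm:main2}. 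Two smaller points: the inclusion $B^\theta_{p,p}(X)\subset B^\theta_{p,\infty}(X)$ does not follow ``by the integral representation'' alone, since integrability of $t\mapsto I(u,t)$ against $\tfrac{dt}{t}$ does not bound $\sup_t I(u,t)$ without invoking your scale-comparison (the paper instead routes this through Lemma~\ref{lem:Besov-KS} and part~\ref{it:B=KS}); and your large-scale estimate $I(u,t)\lesssim t^{-\theta p}\norm{u}_{L^p(X)}^p$ requires Tonelli plus doubling to handle the term $\int_X\vint{B(x,t)}|u(y)|^p\,d\mu(y)\,d\mu(x)$, which you should make explicit. Once the $\limsup$ is used correctly and the comparability claim is dropped, your argument reduces to the paper's.
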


\begin{proof} 
The assertions \ref{it:B=KS} and \ref{it:besovrelation} are proved in \cite[Lemma 3.2]{Bau} and
\cite[Proposition 2.2]{GYZ} respectively, but we give the proof for the reader's convenience.

\ref{it:B=KS}:
It is direct that $B^\theta_{p,\infty}(X)\subset KS^\theta_p(X)$, and so it suffices to show the reverse inclusion.
To this end, let $u\in KS^\theta_p(X)$. Then there is some $r_u>0$ such that
\begin{equation}\label{eq:small-osc}
\sup_{0<r\le r_u}\int_X\vint{B(x,r)}\, \frac{|u(x)-u(y)|^p}{r^{\theta p}}\, d\mu(y)\, d\mu(x)\le ||u||^p_{KS^\theta_p(X)}+1.
\end{equation}
For $r>r_u$ we have that
\begin{align}\label{eq:large-osc}
\int_X&\vint{B(x,r)}\, \frac{|u(x)-u(y)|^p}{r^{\theta p}}\, d\mu(y)\, d\mu(x)\notag\\
&=\int_X \frac{\mu(B(x,r_u))}{\mu(B(x,r))}\vint{B(x,r_u)}\, \frac{|u(x)-u(y)|^p}{r^{\theta p}}\, d\mu(y)\, d\mu(x)\notag\\
 &\qquad\qquad+\int_X\frac{1}{\mu(B(x,r))}\int_{B(x,r)\setminus B(x,r_u)}\, \frac{|u(x)-u(y)|^p}{r^{\theta p}}\, d\mu(y)\, d\mu(x)\notag\\
&\le ||u||^p_{KS^\theta_p(X)}+1+\int_X\frac{2^p}{\mu(B(x,r))} \int_{B(x,r)}\frac{|u(y)|^p+|u(x)|^p}{r_u^{\theta p}}\, d\mu(y)\, d\mu(x).
\end{align} 

Note that
\begin{align*}
\int_X&\frac{2^p}{\mu(B(x,r))} \int_{B(x,r)}\frac{|u(y)|^p+|u(x)|^p}{r_u^{\theta p}}\, d\mu(y)\, d\mu(x)\\
 &=\frac{2^p}{r_u^{\theta p}}\, \int_X|u(x)|^p\, d\mu(x)
    +\frac{2^p}{r_u^{\theta p}}\, \int_X\int_X\frac{|u(y)|^p\, \chi_{B(x,r)}(y)}{\mu(B(x,r))}\, d\mu(y)\, \mu(x)\\
 &\le \frac{2^p}{r_u^{\theta p}}\, \Vert u\Vert_{L^p(X)}^p+
     \frac{2^p\, C}{r_u^{\theta p}}\, \int_X\, |u(y)|^p\, \int_X \frac{\chi_{B(y,r)}(x)}{\mu(B(y,r))}\, d\mu(x)\, d\mu(y)\\
 &=\frac{2^p(1+C)}{r_u^{\theta p}}\, \Vert u\Vert_{L^p(X)}^p,
\end{align*}
where we have used the doubling property of $\mu$ and Tonelli's theorem in the penultimate step. Now from~\eqref{eq:large-osc}
and~\eqref{eq:small-osc} above
we see that for each $r>0$ we have
\[
\int_X\vint{B(x,r)}\, \frac{|u(x)-u(y)|^p}{r^{\theta p}}\, d\mu(y)\, d\mu(x)\le
||u||^p_{KS^\theta_p(X)}+1+\frac{2^p(1+C)}{r_u^{\theta p}}\, \Vert u\Vert_{L^p(X)}^p,
\]
and as the right-hand side of the above inequality is independent of $r$, it follows that $u\in B^\theta_{p,\infty}(X)$.

\ref{it:besovrelation}:
The inclusion $B^{\theta}_{p,p}(X) \subset B^{\theta}_{p,\infty}(X)$ follows from Lemma~\ref{lem:Besov-KS} below together with claim~(1) above, and
so we prove $B^{\theta}_{p,\infty}(X) \subset B^{\theta - \delta}_{p,p}(X)$ here.
Let $u \in B^{\theta}_{p,\infty}(X)$ and fix a choice of $\alpha$ satisfying $0<\alpha < \diam(X)$.
Then we see that
\begin{align*}
	&\int_{0}^{\diam(X)}\int_{X}\vint{B(x,t)}\frac{\abs{u(x) - u(y)}^{p}}{t^{(\theta-\delta)p}}\,d\mu(y)\,d\mu(x)\,\frac{dt}{t} \\
	&= \int_{0}^{\alpha}\int_{X}\vint{B(x,t)}\frac{\abs{u(x) - u(y)}^{p}}{t^{(\theta-\delta)p}}\,d\mu(y)\,d\mu(x)\,\frac{dt}{t} \\
	&\qquad+ \int_{\alpha}^{\diam(X)}\int_{X}\vint{B(x,t)}\frac{\abs{u(x) - u(y)}^{p}}{t^{(\theta-\delta)p}}\,d\mu(y)\,d\mu(x)\,\frac{dt}{t} \\
	&\le \norm{u}_{B^{\theta}_{p,\infty}(X)}^{p}\int_{0}^{\alpha}t^{\delta p - 1}\,dt + 2^{p - 1}\biggl(\int_{\alpha}^{\diam(X)}\frac{\norm{u}_{L^p(X)}^{p}}{t^{(\theta - \delta)p + 1}}\,dt \\
	&\qquad\qquad\qquad+ \int_{\alpha}^{\diam(X)}\int_{X}\int_{X}\frac{\abs{u(y)}^{p}\chi_{B(x,t)}(y)}{t^{(\theta - \delta)p + 1}\mu(B(x,t))}\,d\mu(y)\,d\mu(x)\,dt\biggr) \\
	&\le \frac{\alpha^{\delta p}}{\delta p}\norm{u}_{B^{\theta}_{p,\infty}(X)}^{p} +
	\frac{2^{p-1}}{(\theta-\delta)p}\left[\frac{1}{\alpha^{(\theta-\delta)p}}-\frac{1}{\diam(X)^{(\theta-\delta)p}}\right] 
	\norm{u}_{L^p(X)}^{p} \\
	&\qquad+ 2^{p-1}C_{\mathrm{D}}\int_{\alpha}^{\diam(X)}\int_{X}\int_{X}\frac{\abs{u(y)}^{p}\chi_{B(y,t)}(x)}{t^{(\theta - \delta)p + 1}\mu(B(y,t))}\,d\mu(x)\,d\mu(y)\,dt \\
	&\le \frac{\alpha^{\delta p}}{\delta p}\norm{u}_{B^{\theta}_{p,\infty}(X)}^{p} +
	\frac{2^{p-1}\, (1+C_D)}{(\theta-\delta)p}\left[\frac{1}{\alpha^{(\theta-\delta)p}}-\frac{1}{\diam(X)^{(\theta-\delta)p}}\right] 
	\norm{u}_{L^p(X)}^{p},
\end{align*}
where we have used the doubling property of $\mu$ and Tonelli's theorem in the third inequality.
Note if $X$ is unbounded, then $\frac{1}{\diam(X)^{(\theta-\delta)p}} = 0$.
This estimate shows that $u \in B^{\theta - \delta}_{p,p}(X)$.
\end{proof}

In general, unlike the energy related to $B^\theta_{p,\infty}(X)$,
the energy $\norm{u}_{KS^{\theta}_{p}(X)}$ is zero whenever $u \in B^{\theta}_{p,p}(X)$.

\begin{lem}\label{lem:Besov-KS}
Let $\mu$ be a doubling measure on $X$ and $\theta>0$. Then 
$B^\theta_{p,p}(X)\subset KS^\theta_p(X)$
with $\Vert u\Vert_{KS^\theta_p(X)}=0$ whenever
$u\in B^\theta_{p,p}(X)$.
\end{lem}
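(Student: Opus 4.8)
The plan is to reduce everything to the behaviour near $t=0^{+}$ of
\[
\Phi_u(t):=\int_X\vint{B(x,t)}\frac{\abs{u(x)-u(y)}^p}{t^{\theta p}}\,d\mu(y)\,d\mu(x),\qquad t>0,
\]
for which $\norm{u}_{KS^\theta_p(X)}^p=\limsup_{t\to 0^+}\Phi_u(t)$ by definition. First I would record two facts, valid since $\mu$ is doubling and $u\in B^\theta_{p,p}(X)$: (a) $\Phi_u(t)<\infty$ for every $t>0$, which is exactly the Tonelli-plus-doubling estimate already carried out in the proof of Lemma~\ref{lem:B=KS} (it bounds $\int_X\vint{B(x,t)}(\abs{u(x)}^p+\abs{u(y)}^p)\,d\mu(y)\,d\mu(x)$ by a multiple of $\norm{u}_{L^p(X)}^p$); and (b) $\int_0^{\diam(X)}\Phi_u(t)\,\tfrac{dt}{t}<\infty$, which is (the easy half of) the equivalence $\norm{u}_{B^\theta_{p,p}(X)}^p\approx\int_0^{\diam(X)}\Phi_u(t)\,\tfrac{dt}{t}$ recalled after the definitions, from \cite[Theorem~5.2]{GKS}. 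The point to keep in mind is that (b) alone yields only $\liminf_{t\to 0^+}\Phi_u(t)=0$; the content of the lemma is to upgrade this to $\limsup_{t\to 0^+}\Phi_u(t)=0$.

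The key step — and the one I expect to be the real content of the proof — is a doubling-type \emph{almost-monotonicity} of $\Phi_u$: there is a constant $C=C_{\mathrm{D}}\,2^{\theta p}$ with
\[
\Phi_u(t)\le C\,\Phi_u(s)\qquad\text{whenever }0<t\le s\le 2t.
\]
To see this, fix $x\in X$: the map $r\mapsto\int_{B(x,r)}\abs{u(x)-u(y)}^p\,d\mu(y)$ is nondecreasing, while $\mu(B(x,s))\le\mu(B(x,2t))\le C_{\mathrm{D}}\,\mu(B(x,t))$ for $t\le s\le 2t$; dividing numerator by denominator gives $\vint{B(x,t)}\abs{u(x)-u(y)}^p\,d\mu(y)\le C_{\mathrm{D}}\,\vint{B(x,s)}\abs{u(x)-u(y)}^p\,d\mu(y)$. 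Integrating in $x$ and absorbing the ratio of powers $(s/t)^{\theta p}\le 2^{\theta p}$ from the $t^{-\theta p}$ factor yields the displayed inequality.

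Finally I would combine the two ingredients. For $t\in[2^{-j-1},2^{-j}]$ one has both $t\le 2^{-j}\le 2t$ and $2^{-j-1}\le t\le 2\cdot 2^{-j-1}$, so almost-monotonicity gives $\Phi_u(2^{-j-1})\le C\,\Phi_u(t)$ and $\Phi_u(t)\le C\,\Phi_u(2^{-j})$. Integrating the first inequality over $[2^{-j-1},2^{-j}]$ against $dt/t$ gives, for all large $j$,
\[
(\log 2)\,\Phi_u(2^{-j-1})\le C\int_{2^{-j-1}}^{2^{-j}}\Phi_u(t)\,\frac{dt}{t},
\]
and the right-hand side tends to $0$ as $j\to\infty$, since $\sum_{j}\int_{2^{-j-1}}^{2^{-j}}\Phi_u(t)\,\tfrac{dt}{t}\le\int_0^{\diam(X)}\Phi_u(t)\,\tfrac{dt}{t}<\infty$ by fact~(b). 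Hence $\Phi_u(2^{-j})\to 0$, and then the second inequality gives $\Phi_u(t_n)\le C\,\Phi_u(2^{-j_n})\to 0$ for any sequence $t_n\to 0^+$ (with $j_n$ chosen so that $t_n\in[2^{-j_n-1},2^{-j_n}]$). Therefore $\lim_{t\to 0^+}\Phi_u(t)=0$, which in particular shows $u\in KS^\theta_p(X)$ with $\norm{u}_{KS^\theta_p(X)}=0$. The only genuine obstacle is the almost-monotonicity estimate; the rest is bookkeeping, and I note that one never needs the reverse half of the equivalence in~(b) — only the finiteness of $\int_0^{\diam(X)}\Phi_u(t)\,\tfrac{dt}{t}$ for $u\in B^\theta_{p,p}(X)$.
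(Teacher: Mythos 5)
Your proposal is correct and follows essentially the same route as the paper's proof: both start from the finiteness of $\int_0^{\diam(X)}\Phi_u(t)\,\tfrac{dt}{t}$ (the integrated form of the Besov norm), both establish the doubling-based comparability of $\Phi_u$ at nearby scales (your ``almost-monotonicity'' is exactly the paper's two-sided estimate $C^{-1}\calE_\theta(u,2^{i-1})\le\calE_\theta(u,t)\le C\,\calE_\theta(u,2^i)$ for $2^{i-1}\le t\le 2^i$), and both conclude by summing over dyadic scales that $\Phi_u(2^{-j})\to 0$ and hence $\limsup_{t\to 0^+}\Phi_u(t)=0$. No gaps.
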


\begin{proof}
Let $u\in B^\theta_{p,p}(X)$. Then we have that
\[
\int_0^{\diam X} \int_X\vint{B(x,t)}\frac{|u(y)-u(x)|^p}{t^{\theta p}}\, d\mu(y)\, d\mu(x)\, \frac{dt}{t}<\infty.
\]
For $t>0$ we set
\[
\calE_\theta(u,t):=\int_X\vint{B(x,t)}\frac{|u(y)-u(x)|^p}{t^{\theta p}}\, d\mu(y)\, d\mu(x).
\]
Let $k_*\in \mathbb Z\cup\{\infty\}$ be the maximum of all the positive integers $k$ such that $2^{k-1}<\diam X$.
By the doubling property of $\mu$ we have
\begin{align*}
\int_0^{\diam X} \int_X\vint{B(x,t)}\frac{|u(y)-u(x)|^p}{t^{\theta p}}\, d\mu(y)\, d\mu(x)\, \frac{dt}{t}
 &\quad\ge 
 \sum_{i=-\infty}^{k_*-2}
\int_{2^i}^{2^{i+1}}\, \calE_\theta(u,t)\, \frac{dt}{t}\\ 
&\quad \approx\, \sum_{i=-\infty}^{k_*-2}\calE_\theta(u,2^i).
\end{align*}
Since the left-most expression is finite, it follows that the series on the right-hand side of the
above estimate is also finite, and therefore
\[
\lim_{i\to-\infty}\calE_\theta(u,2^i)=0.
\]
By the doubling property of $\mu$ we also have that for positive real numbers $t<\diam(X)$,
\[
\frac{1}{C}\, \calE_\theta(u,2^{i-1})\le \calE_\theta(u,t)\le C\, \calE_\theta(u,2^i)\text{ whenever }2^{i-1}\le t\le 2^i.
\]
It follows that
\[
\limsup_{t\to 0^+}\calE_\theta(u,t)\le C\, \lim_{i\to-\infty}\calE_\theta(u,2^i)=0,
\]
completing the proof.
\end{proof}

\section{Examples}

The following examples show that even though the two vector spaces considered in Lemma~\ref{lem:Besov-KS}
are the same as sets, their energy norms
can be incomparable.

\begin{example}\label{ex:bow-tie}
In this example we consider $X$ to be the union of two $n$-dimensional hypercubes
glued at the vertex $o=(0,\cdots,0)$, given by
\[
X=[0,1]^n\, \bigcup\, [-1,0]^n,
\]
equipped with the Euclidean metric and the $n$-dimensional Lebesgue measure $\mathcal{L}^{n}$.
Here, with $u:=\chi_E$ where $E=[0,1]^n$,
we see that $u\in B^\theta_{p,p}(X)$ precisely when $p\theta<n$, but 
we have $\Vert u\Vert_{B^\theta_{p,\infty}(X)}>0$ (see \eqref{e:bowtie.KS} for a detailed calculation) but from Lemma~\ref{lem:Besov-KS} we also have that
$\Vert u\Vert_{KS^\theta_p(X)}=0$. To see that $u\in B^\theta_{p,p}(X)$ when $p\theta<n$,
we decompose the two pieces $E$ and $X\setminus E$ into dyadic annuli given by
$L_i:=\{(x_{1},\dots,x_{n})\in E\, :\, 2^{-i-1}R<\sqrt{x_{1}^2+ \cdots + x_{n}^2}\le 2^{-i}R\}$ and
$R_i=\{(x,y)\in X\setminus E\, :\, 2^{-i-1}R<\sqrt{x_{1}^2+ \cdots + x_{n}^2}\le 2^{-i}R\}$ with $R=\sqrt{n}$, we have that
\begin{align*}
\int_X\int_X\frac{|\chi_E(x)-\chi_E(y)|^p}{d(x,y)^{n+\theta p}}&\, d\mathcal{L}^n(y)\, d\mathcal{L}^n(x)\\
\approx &\sum_{i,j\in\mathbb{N}\cup\{0\}}\int_{L_i}\int_{R_j}\, \frac{|\chi_E(x)-\chi_E(y)|^p}{d(x,y)^{n+\theta p}}\, d\mathcal{L}^n(y)\, d\mathcal{L}^n(x)\\
\approx&\sum_{i=0}^\infty\, \sum_{j=i}^\infty\, \int_{L_i}\int_{R_j}\, \frac{1}{d(x,y)^{n+\theta p}}\, d\mathcal{L}^n(y)\, d\mathcal{L}^n(x)\\ 
\approx&\sum_{i=0}^\infty\sum_{j=i}^\infty\frac{2^{-ni}R^n\, 2^{-nj}R^n}{(2^{-i}+2^{-j})^{n+\theta p}\,R^{n+\theta p}}\\
\approx&\sum_{i=0}^\infty\sum_{j=i}^\infty\, 2^{i\theta p}\, 2^{-nj}
\approx \sum_{i=0}^\infty 2^{-i(n-\theta p)}.
\end{align*} 
The above sum is finite if and only if $\theta p<n$. Thus $\chi_E\in B^\theta_{p,p}(X)$ if and only if $\theta p<n$, and so
$\chi_E\in KS^\theta_p(X)$ with $\norm{u}_{KS^{\theta}_{p}(X)} = 0$ whenever $\theta p<n$.

\begin{figure}[tb]
\includegraphics[height=150pt]{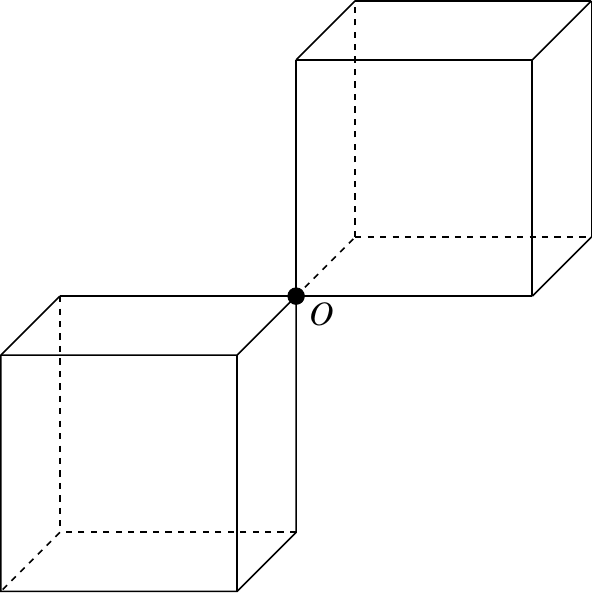}
\caption{Gluing of two unit cubes at the origin}\label{fig:bowtie}
\end{figure}

In addition, 
{in computing $\vint{B(x,r)}\frac{\abs{\chi_{E}(x) - \chi_{E}(y)}^{p}}{r^{p\theta}}\,d\mathcal{L}^{n}(y)$ for $x\in E$,
we need only consider $x=(x_1,\cdots,x_n)\in E$ for which $\sqrt{x_1^2+\cdots+x_n^2}<r$, and so by restricting our attention
to the slices $L_j$ for which $2^{-j}R\lesssim r$, we obtain}
\begin{equation}\label{e:bowtie.KS}
	\int_{X}\vint{B(x,r)}\frac{\abs{\chi_{E}(x) - \chi_{E}(y)}^{p}}{r^{p\theta}}\,d\mathcal{L}^{n}(y)\,d\mathcal{L}^{n}(x)
	\approx r^{n - p\theta}.
\end{equation}
Hence $\chi_{E} \in KS^{\theta}_{p}(X)$ whenever $p\theta \le n$;
note that $\norm{u}_{KS^{\theta}_{p}(X)} = 0$ if $p\theta < n$. 

The following proposition states a relation between $KS^{1}_{n}(X)$ and $N^{1,n}(X)$.
Set $E_{1} \coloneqq [0,1]^{n}$, $E_{2} \coloneqq [-1,0]^{n}$ and $o \coloneqq (0,\dots,0) \in E_{1} \cap E_{2}$ for simplicity.
In what follows, if $u$ is a function defined on a set $E\subset X$, then the zero-extension of $u$ to $X\setminus E$
is denoted by $u\chi_E$. 

\begin{prop}\label{prop:KSdiff.bowtie} In the above setting $X=[0,1]^n\cup[-1,0]^n$, it follows that
	\begin{enumerate}[\rm(1)]
		\item\label{it:determine.bowtie}
		\begin{equation*}
			KS^{1}_{n}(X)
			= \biggl\{ u_{1}\chi_{E_{1}} + u_{2}\chi_{E_{2}} \biggm| 
				 u_{i} \in N^{1,n}(E_{i}), i \in \{ 1,2 \},\
				I_{KS}(u_{1},u_{2}) < \infty 
			\biggr\}, 
		\end{equation*}
		where
		\begin{equation*}
			I_{KS}(u_{1},u_{2}) \coloneqq \limsup_{r \to 0^+}\int_{E_{1} \cap B(o,r)}\int_{E_{2} \cap B(o,r)}\frac{\abs{u_{1}(x) - u_{2}(y)}^{n}}{r^{2n}}\,d\mathcal{L}^{n}(y)\,d\mathcal{L}^{n}(x).
		\end{equation*}
		\item\label{it:relation.bowtie} $KS^{1}_{n}(X) \subsetneq N^{1,n}(X)$.
	\end{enumerate}
\end{prop}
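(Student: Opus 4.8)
The plan is to establish \ref{it:determine.bowtie} by proving the two inclusions separately, using the decomposition of $X$ into the two cubes $E_1,E_2$ meeting only at $o$, and then to deduce \ref{it:relation.bowtie} by exhibiting an explicit function in $N^{1,n}(X)\setminus KS^1_n(X)$. For the forward inclusion in \ref{it:determine.bowtie}, suppose $u\in KS^1_n(X)$. Since $E_1$ and $E_2$ are each closed subcubes with a bi-Lipschitz-flat geometry, the restriction $u|_{E_i}$ lies in $KS^1_n(E_i)$ (the local Korevaar--Schoen energy on $E_i$ is bounded by the one on $X$), and since each $E_i=[0,1]^n$ or $[-1,0]^n$ supports an $n$-Poincaré inequality, we have $KS^1_n(E_i)=N^{1,n}(E_i)$ by the facts recalled in Section~2. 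Writing $u_i\coloneqq u|_{E_i}$, one then has $u=u_1\chi_{E_1}+u_2\chi_{E_2}$ ($\mathcal{L}^n$-a.e., since $\mathcal{L}^n(E_1\cap E_2)=0$), and the finiteness of $I_{KS}(u_1,u_2)$ follows because the cross-integral over $B(o,r)$ appearing in $I_{KS}$ is, up to constants, one of the pieces into which $\calE_1(u,r)$ splits when one separates $B(x,r)$-averages into their intersections with $E_1$ and $E_2$; this piece is therefore dominated by $\|u\|^p_{KS^1_n(X)}$ in the $\limsup$.

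For the reverse inclusion, take $u_i\in N^{1,n}(E_i)=KS^1_n(E_i)$ with $I_{KS}(u_1,u_2)<\infty$ and set $u\coloneqq u_1\chi_{E_1}+u_2\chi_{E_2}$. The key computation is to bound $\calE_1(u,r)=\int_X\vint{B(x,r)}\frac{|u(x)-u(y)|^n}{r^n}\,d\mathcal{L}^n(y)\,d\mathcal{L}^n(x)$ for small $r$. For $x$ whose distance to $o$ exceeds $r$, the ball $B(x,r)$ meets only one of the two cubes, so the contribution is controlled by $\|u_1\|^n_{KS^1_n(E_1)}+\|u_2\|^n_{KS^1_n(E_2)}$ in the $\limsup$. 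For $x$ within distance $r$ of $o$ — hence in the set $B(o,r)$, of measure $\lesssim r^n$ — the integrand splits into a same-cube part (again dominated by the $KS^1_n(E_i)$ energies, since $B(x,r)\cap E_i$ averages are comparable to $E_i$-intrinsic averages by volume doubling and the flatness of the cubes near $o$) and a cross-cube part; the cross-cube part over $E_1\cap B(o,r)$ integrated against $E_2\cap B(o,r)$ is, after using $\mu(B(x,r))\approx r^n$, exactly what $I_{KS}$ controls, and symmetrically for the roles reversed. Taking $\limsup_{r\to 0^+}$ gives $\|u\|_{KS^1_n(X)}<\infty$, so $u\in KS^1_n(X)$. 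This same bookkeeping also proves \ref{it:energydecomp}-type identities but here we need only the finiteness.

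For \ref{it:relation.bowtie}, the inclusion $KS^1_n(X)\subset N^{1,n}(X)$ holds because $KS^1_p\subset N^{1,p}$ on any doubling space (cited in Section~2), so it remains to find $u\in N^{1,n}(X)\setminus KS^1_n(X)$. The natural candidate is $u=\chi_{E_1}$ (or a smoothed-out version such as $\min\{1,\max\{0,\text{linear in }d(\cdot,o)\}\}$ supported in $E_1$): by Example~\ref{ex:bow-tie} with $\theta=1$, $p=n$ we have $\chi_{E_1}\in B^1_{n,\infty}(X)$ with $\|\chi_{E_1}\|_{B^1_{n,\infty}(X)}>0$ (the borderline estimate \eqref{e:bowtie.KS} gives $\calE_1(\chi_{E_1},r)\approx r^{n-p\theta}=1$), and in fact $\|\chi_{E_1}\|_{KS^1_n(X)}=\limsup_{r\to0^+}\calE_1(\chi_{E_1},r)>0$, so $\chi_{E_1}\in KS^1_n(X)$ but its energy does \emph{not} decompose as in \ref{it:determine.bowtie} into finite-energy pieces plus a finite $I_{KS}$ — indeed $I_{KS}(1,0)=\limsup_{r\to0^+}\int_{E_1\cap B(o,r)}\int_{E_2\cap B(o,r)}r^{-2n}\,d\mathcal{L}^n\,d\mathcal{L}^n\approx\limsup_{r\to0^+} r^{-2n}\cdot r^n\cdot r^n\approx 1<\infty$, so this particular $u$ actually \emph{is} in $KS^1_n(X)$; the strictness must instead come from a function that is unbounded or whose boundary traces on the two cubes at $o$ are genuinely incompatible. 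Concretely, I would take $u$ to be (a normal contraction of) $\log\log(1/d(\cdot,o))$-type function, or more simply a function in $N^{1,n}(E_1)$ that is continuous on $E_1$ but for which no choice of extension across $o$ keeps $I_{KS}$ finite: since $n$-Sobolev functions on $E_1$ need not be continuous at the single point $o$, one can arrange $u|_{E_1}\in N^{1,n}(E_1)$, $u\equiv 0$ on $E_2$, with $u$ oscillating near $o$ so that $\int_{E_1\cap B(o,r)}|u|^n\,d\mathcal{L}^n$ decays slower than $r^{2n}$, forcing $I_{KS}(u|_{E_1},0)=\infty$ while $u\in N^{1,n}(X)$ (the upper gradient of $u|_{E_1}$, extended by $0$, is an upper gradient on $X$ because no rectifiable curve crosses $o$ with positive length on both sides in a way that is not already accounted for).

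\textbf{Main obstacle.} The delicate point is the bookkeeping near the gluing point $o$ in both directions of \ref{it:determine.bowtie}: one must verify that $B(x,r)$-averages taken in $X$ are comparable to intrinsic $E_i$-averages for $x\in E_i$ close to $o$ (so that the same-cube contributions really are controlled by $\|u_i\|_{KS^1_n(E_i)}$), and isolate precisely the cross term that $I_{KS}$ governs without losing a factor that blows up as $r\to0$. For \ref{it:relation.bowtie}, the subtlety is choosing the witness function: the tempting choice $\chi_{E_1}$ fails because $I_{KS}(1,0)<\infty$, so one genuinely needs an unbounded or wildly oscillating $N^{1,n}(E_1)$ function exploiting the failure of Morrey embedding at the critical exponent $p=n$, and one must check both that its zero-extension lies in $N^{1,n}(X)$ (straightforward, as $o$ is a single point of zero $n$-capacity in neither cube's sense but carries no rectifiable curves through it) and that $I_{KS}$ is forced to be infinite.
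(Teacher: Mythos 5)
Your proposal follows essentially the same route as the paper: for part (1) you split the Korevaar--Schoen energy into the two same-cube terms (identified with the $N^{1,n}(E_i)=KS^1_n(E_i)$ energies via the Poincar\'e inequality on each cube) and the two cross-cube terms, observe that the cross terms are supported on $x\in B(o,r)$ and are sandwiched between the $I_{KS}$-integrals over $B(o,r/4)$ and $B(o,r)$; for part (2) you correctly rule out $\chi_{E_1}$ and propose a $\log\log(1/|x|)$-type function zero-extended to $E_2$, which is exactly the paper's witness $v(x)=\log(-\log|x|)$. Two small corrections are needed. First, your stated sufficient condition for $I_{KS}(u|_{E_1},0)=\infty$ --- that $\int_{E_1\cap B(o,r)}|u|^n\,d\mathcal{L}^n$ decay slower than $r^{2n}$ --- is off by a factor: the $y$-integral over $E_2\cap B(o,r)$ contributes $\approx r^n$ against the denominator $r^{2n}$, so the correct threshold is decay slower than $r^n$ (equivalently, the average of $|u|^n$ over $E_1\cap B(o,r)$ must blow up, which is what $\essinf_{E_1\cap B(o,r)}|v|\to\infty$ delivers); as written, your condition is satisfied by $\chi_{E_1}$, contradicting your own (correct) computation that $I_{KS}(1,0)<\infty$. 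Second, the claim that $X$ ``carries no rectifiable curves through $o$'' is literally false (the diagonal segment passes through $o$); the correct justification, used by the paper, is that the family of nonconstant rectifiable curves through $o$ has $n$-modulus zero (\cite[Corollary 5.3.11]{HKSTbook}), which is what permits both the identification $N^{1,n}(X)=\{u_1\chi_{E_1}+u_2\chi_{E_2}:u_i\in N^{1,n}(E_i)\}$ and the membership of the zero-extension in $N^{1,n}(X)$.
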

\begin{proof}
	We first note that the $n$-modulus of the all rectifiable curves in $X$ through $o$ is $0$
by \cite[Corollary 5.3.11]{HKSTbook}, and that $KS^{1}_{n}(X) \subset N^{1,n}(X)$
by \cite[Theorem 10.5.1]{HKSTbook} and \cite[Corollary 6.5]{LPZ}.
As a consequence, we have
	\[
	N^{1,n}(X) = \bigl\{ u_{1}\chi_{E_{1}} + u_{2}\chi_{E_{2}} \bigm| u_{i} \in N^{1,n}(E_{i})\text{ for }i=1,2 \bigr\}.
	\]
	In addition, $KS^{1}_{n}(E_{i}) = N^{1,n}(E_{i})$ with comparable norms
	by \cite[Theorem~10.5.2]{HKSTbook}. When $u\in KS^1_n(X)$, necessarily
	$u\chi_{E_i}\in KS^1_n(E_i)$. This is because when $x\in E_i$ and $0<r<1$, we must have
	that $\mathcal{L}^n(B(x,r))\approx r^n\approx\mathcal{L}^n(B(x,r)\cap E_i)$.

	{\bf Proof of~(1):} Let $u_{i} \in N^{1,n}(E_{i})$ for $i = 1,2$, and set $u=u_1\chi_{E_1}+u_2\chi_{E_2}$.
	We define
	\begin{equation*}
		\mathcal{E}_{r}^{KS}(v; A_{1}, A_{2})
		\coloneqq \int_{A_{1}}\int_{A_{2} \cap B(x,r)}\frac{\abs{v(x) - v(y)}^{n}}{r^{n}}\,d\mathcal{L}^{n}(y)\,d\mathcal{L}^{n}(x),
	\end{equation*}
	for $v \in L^{n}(A_1\cup A_2)$ and Borel sets $A_{i}$ of $X$.
	Observe that
	\begin{align*}
		\int_{X}\vint{B(x,r)}&\frac{\abs{u(x) - u(y)}^{n}}{r^{n}}\,d\mathcal{L}^{n}(y)\,d\mathcal{L}^{n}(x) \\
		&\approx \frac{1}{r^{n}}\Bigl(\mathcal{E}_{r}^{KS}(u_{1}; E_{1}, E_{1}) + \mathcal{E}_{r}^{KS}(u_{2}; E_{2}, E_{2}) \\
		&\qquad\qquad+ \mathcal{E}_{r}^{KS}(u; E_{1}, E_{2}) + \mathcal{E}_{r}^{KS}(u; E_{2}, E_{1})\Bigr).
	\end{align*}
	Since
	\[
	\limsup_{r \to 0^+}\frac{\mathcal{E}_{r}^{KS}(u_{i}; E_{i}, E_{i})}{r^{n}}
	\approx \int_{E_i}|\nabla u_i(x)|^n\, d\mathcal{L}^{n}(x)
	\] 
	it suffices to prove that $u \in KS^{1}_{n}(X)$ if and only if
$I_{KS}(u_{1},u_{2}) < \infty$.

Given the above discussion, we know that $u\in KS^1_n(X)$ if and only if
\begin{equation}\label{eq:u1-vs-u2}
\limsup_{r\to 0^+}\, \frac{1}{r^{n}}\, \Bigl(\mathcal{E}_{r}^{KS}(u; E_{1}, E_{2}) + \mathcal{E}_{r}^{KS}(u; E_{2}, E_{1})\Bigr)<\infty.
\end{equation}
Let us focus our attention on $\mathcal{E}_{r}^{KS}(u; E_{1}, E_{2})$, with the second term above being handled in a similar manner.
Note that
\[
\mathcal{E}_{r}^{KS}(u; E_{1}, E_{2})=\int_{E_1}\, \int_{E_2\cap B(x,r)}\frac{|u_1(x)-u_2(y)|^n}{r^n}\, d\mathcal{L}^{n}(y)\,d\mathcal{L}^{n}(x),
\]
and so in order for $E_2\cap B(x,r)$ to be non-empty when $x\in E_1$, it must be the case that $x\in B(o,r)$. Thus
\begin{align*}
\mathcal{E}_{r}^{KS}(u; E_{1}, E_{2})&=\int_{E_1\cap B(o,r)}\, \int_{E_2\cap B(x,r)}\frac{|u_1(x)-u_2(y)|^n}{r^n}\, d\mathcal{L}^{n}(y)\, d\mathcal{L}^{n}(x)\\
& \le \int_{E_1\cap B(o,r)}\, \int_{E_2\cap B(o,r)}\frac{|u_1(x)-u_2(y)|^n}{r^n}\, d\mathcal{L}^{n}(y)\, d\mathcal{L}^{n}(x),
\end{align*}
and moreover,
\begin{align*}
\mathcal{E}_{r}^{KS}(u; E_{1}, E_{2})&=\int_{E_1\cap B(o,r)}\, \int_{E_2\cap B(x,r)}\frac{|u_1(x)-u_2(y)|^n}{r^n}\, d\mathcal{L}^{n}(y)\, d\mathcal{L}^{n}(x)\\
& \ge \int_{E_1\cap B(o,r/4)}\, \int_{E_2\cap B(o,r/4)}\frac{|u_1(x)-u_2(y)|^n}{r^n}\, d\mathcal{L}^{n}(y)\, d\mathcal{L}^{n}(x).
\end{align*}
Similarly, we also see that
\begin{align*}
\mathcal{E}_{r}^{KS}(u; E_{2}, E_{1})&\le \int_{E_1\cap B(o,r)}\, \int_{E_2\cap B(o,r)}\frac{|u_1(x)-u_2(y)|^n}{r^n}\, d\mathcal{L}^{n}(y)\, d\mathcal{L}^{n}(x),\\
\mathcal{E}_{r}^{KS}(u; E_{2}, E_{1})&\ge \int_{E_1\cap B(o,r/4)}\, \int_{E_2\cap B(o,r/4)}\frac{|u_1(x)-u_2(y)|^n}{r^n}\, d\mathcal{L}^{n}(y)\, d\mathcal{L}^{n}(x).
\end{align*}
It follows that~\eqref{eq:u1-vs-u2} holds if and only if
\begin{align*}
	&I_{KS}(u_1,u_2) \\
	&= \limsup_{r\to 0^+}\int_{E_1\cap B(o,r)}\, \int_{E_2\cap B(o,r)}\frac{|u_1(x)-u_2(y)|^n}{r^{2n}}\, d\mathcal{L}^{n}(y)\, d\mathcal{L}^{n}(x) 
	< \infty. 
\end{align*} 
	These complete the proof of~(1).

	{\bf Proof of~(2):}  It suffices to find $u \in N^{1,n}(X) \setminus KS^{1}_{n}(X)$;  note that
	$u\in N^{1,n}(X)$ if and only if $u\vert_{E_i}\in N^{1,n}(E_i)$ for $i=1,2$. 
	By direct computation or by~\cite{GS}, we know that the function $v(x) \coloneqq \log{(-\log{\abs{x}})}$ for $x \in E_{1} \setminus \{ o \}$
		belongs to $N^{1,n}(E_{1})$. Note that
		\[
		\lim_{r\to 0^+}\essinf_{E_{1} \cap B(o,r)}\abs{v}=\infty.
		\] 
	Now we define $u \in N^{1,n}(X)$ by $u(x) \coloneqq v(x)$ for $x \in E_{1}$ and $u(x) \coloneqq 0$ for $x \in E_{2} \setminus \{ o \}$.
	Then we easily see that
	\[
	\vint{E_{1} \cap B(o,r)}\vint{E_{2} \cap B(o,r)}\abs{u(x) - u(y)}^{n}\,d\mathcal{L}^{n}(y)\,d\mathcal{L}^{n}(x)
	\ge \biggl(\essinf_{E_{1} \cap B(o,r)}\abs{v}\biggr)^{n},  
	\]
	and so  
	$u \not\in KS^{1}_{n}(X)$ though 
	$u \in N^{1,n}(X)$, since $\essinf_{E_{1} \cap B(o,r)}\abs{v}\to\infty$
	as $r\to 0^+$.  
\end{proof}

Note that the dimension of $B^1_{p,p}(X)$ is $2$ when $1<p<n$ .
Moreover, thanks to~\cite{BBM}
applied to each of the two $n$-dimensional hypercubes of $X$ and \eqref{e:bowtie.KS}, we know that $\theta_p(X)=n/p$, in particular, $\theta_{p}(X) > 1$ when $1<p<n$.

\begin{proof}[Proof of Theorem \ref{thm:critical.gluedSC} for the glued hypercubes]
Note that $\hdim = n$ and $\pwalk = p$ in this case. 
As already mentioned, $\theta_p(X)=n/p = \hdim/p$ when $p<n$. 
The estimate \eqref{e:bowtie.KS}, along with the fact that $\theta_{p}([0,1]^{n}) = 1$, shows $\theta_p(X)= 1 = \pwalk/p$ when $p \ge n$. 
Moreover, for $B^\theta_{p,p}(X)$ to be dense
in $L^p(X)$ it is necessary to have that $B^\theta_{p,p}([0,1]^n)$ be dense in $L^p([0,1]^n)$, and this requires $\theta<1$. It follows that $\theta_p^*(X)\le 1$. On the other hand, when $\theta<1$,  
$B^\theta_{p,p}(X)$ is dense in $L^p(X)$ 
due to the results of~\cite{BBS} because the class of Lipschitz continuous functions forms a dense subclass of  both spaces.
Hence we have $\theta_p^*(X)=1=\pwalk/p$.
\end{proof} 
\end{example}

A similar example can be considered by gluing two copies of the Sierpi\'nski gasket, but the resultant example has dramatically different
phenomena in comparison to Example~\ref{ex:bow-tie} above. 
Precisely, for any $p \in (1,\infty)$, 
$\theta_{p}(X)=\theta_{p}^{\ast}(X)$ 
for this example as shown in Theorem \ref{thm:critical.gluedSG} below. 
In comparison, in Example~\ref{ex:bow-tie} we have that $\theta_p(X)=n/p$. However,
when $\theta\ge 1$ we necessarily have that any function $u\in B^\theta_{p,p}(X)$ must be constant on each 
of the two cubes $[0,1]^n$ and $[-1,0]^n$, thanks to the results in~\cite{BBM}.
 Therefore $\theta_p^*(X)=1$ in Example~\ref{ex:bow-tie}.

\begin{example}[Gluing copies of the Sierpi\'{n}ski gasket]\label{ex:glueSG}
	\begin{figure}[tb]\centering
		\includegraphics[height=150pt]{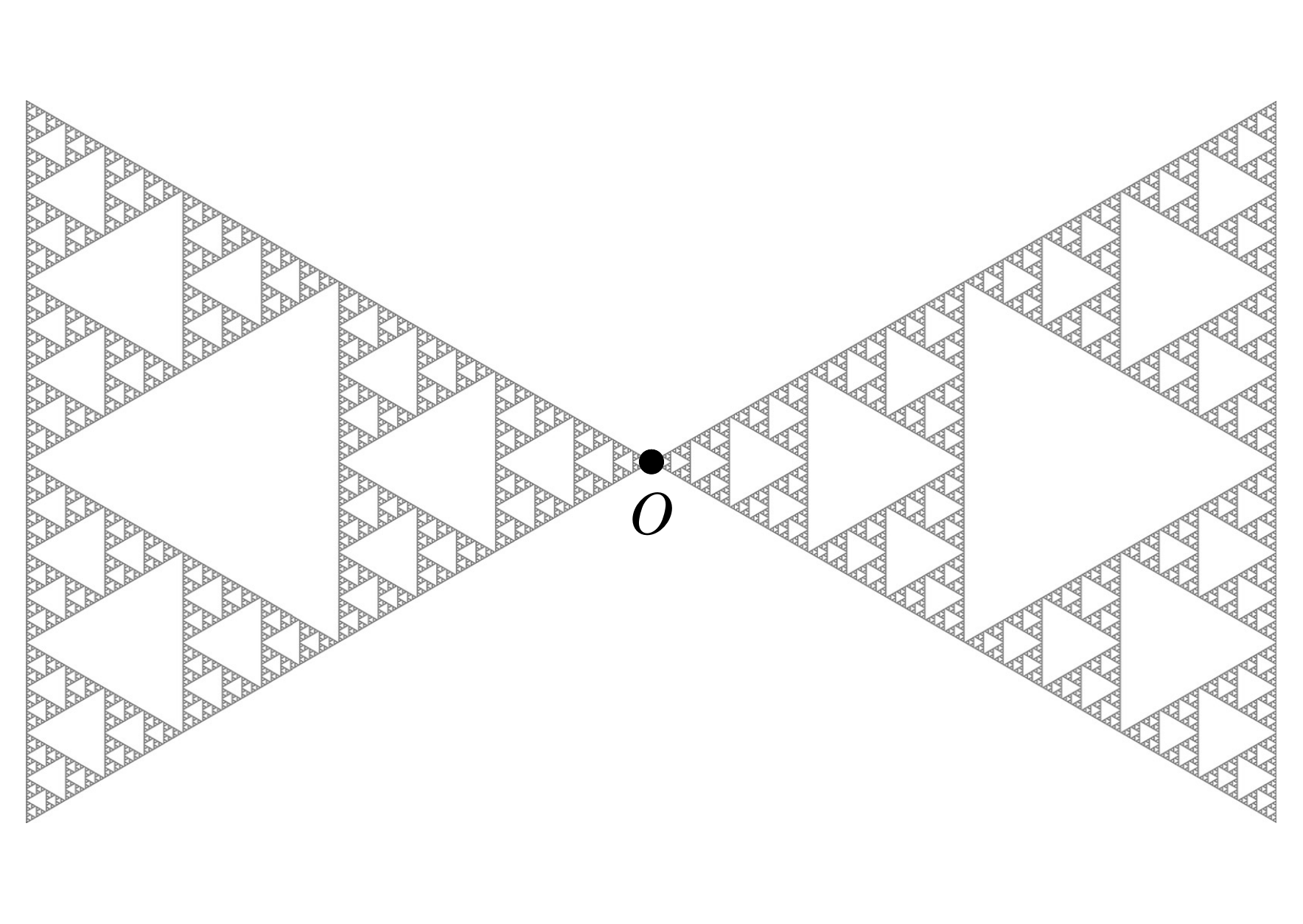} 
		\caption{Gluing of two copies of the Sierpi\'{n}ski gasket}\label{fig:gluedSG}
	\end{figure} 
	In this example, we consider $X$ to be the union of
		two copies of
		the $n$-dimensional standard Sierpi\'{n}ski gasket glued at a point. Let $n \in \mathbb{N}$ with $n \ge 2$, let $K$ be the
		standard $n$-dimensional Sierpi\'nski gasket, rotated so that it is symmetric about the $x_n$-axis in $\mathbb{R}^n$ and located in the half-space $\{ x_n\ge 0 \}$
		and has a vertex at $o \coloneqq (0,0,\cdots,0)$, $K^+ \coloneqq K$ and $K^-$ the reflection of $K$ in the hyperplane $\{ x_n=0 \}$, and then set $X=K^+\cup K^-$ (see Figure \ref{fig:gluedSG} for the case $n = 2$). 
    Let $d$ be the Euclidean metric (restricted to $X$) and
    $\mu$ be the $\hdim$-dimensional Hausdorff measure on $X$, where $\hdim \coloneqq \log{(n+1)}/\log{2}$.
    Then $\mu$ is Ahlfors $\hdim$-regular on $X$, i.e., there exists $c_{1}\ge 1$ such that
    \begin{equation}\label{e:AR}
    	c_{1}^{-1}r^{\hdim} \le \mu(B(x,r)) \le c_{1}r^{\hdim} \quad \text{for any } x\in X, \ \ \ 0<r<\diam(X).
    \end{equation}
    Now let us focus on the following Besov-type energy functional of $\chi_{K^{+}}$:
    \[
    \int_{X}\vint{B(x,r)}\frac{\abs{\chi_{K^{+}}(x) - \chi_{K^{+}}(y)}^{p}}{r^{p\theta}}\, d\mu(y)\,d\mu(x), \quad r > 0.
    \]
    Note that if $x \in K^{-}$ and $B(x,r) \cap K^{+} \neq \emptyset$, then $o \in B(x,r)$ and hence $B(x,r) \subset B(o,2r)$.
    Therefore,
    \begin{align}\label{e:glueSG.KSupper}
        &\int_{X}\vint{B(x,r)}\frac{\abs{\chi_{K^{+}}(x) - \chi_{K^{+}}(y)}^{p}}{r^{p\theta}}\,\mu(dy)\,\mu(dx) \nonumber \\
        &\le c_{1}\,r^{-\hdim}\int_{B(o,2r) \cap K^{-}}\int_{B(o,2r) \cap K^{+}}\frac{\abs{\chi_{K^{+}}(x) - \chi_{K^{+}}(y)}^{p}}{r^{p\theta}}\,\mu(dy)\,\mu(dx) \nonumber \\
        &\le c_{1}\,r^{-\hdim - p\theta}\mu(B(o,2r))^{2}
        \le c_{1}^{3}\,r^{\hdim - p\theta }.
    \end{align} 
    Since $\mu(B(o,r/4) \cap K^{\pm}) \ge c_{2}r^{\hdim}$, we also have
    \begin{align}\label{e:glueSG.KSlower}
        &\int_{X}\vint{B(x,r)}\frac{\abs{\chi_{K^{+}}(x) - \chi_{K^{+}}(y)}^{p}}{r^{p\theta}}\,\mu(dy)\,\mu(dx) \nonumber \\
        &\ge c_{1}^{-1}r^{-\hdim}\int_{B(o,r/4) \cap K^{-}}\int_{B(o,r/4) \cap K^{+}}\frac{\abs{\chi_{K^{+}}(x) - \chi_{K^{+}}(y)}^{p}}{r^{p\theta}}\,\mu(dy)\,\mu(dx) \nonumber \\
        &\ge c_{1}r^{-\hdim - p\theta}\mu(B(o,r/4) \cap K^{-})\mu(B(o,r/4) \cap K^{+})
        \ge c_{1}^{-1}c_{2}^{2}\,r^{\hdim - p\theta}.
    \end{align}
    Hence $\chi_{K^{+}} \in B^{\theta}_{p,p}(X)$ if and only if $0<\theta<\hdim/p$, and 
    $\chi_{K^{+}} \in KS^{\theta}_{p}(X)$ if and only if $0<\theta \le\hdim/p$.
    Moreover, $\norm{\chi_{K^+}}_{KS^{\theta}_{p}(X)} = 0$ for $\theta \in (0,\hdim/p)$, and $\norm{\chi_{K^+}}_{KS^{\hdim/p}_{p}(X)} > 0$.
    In particular, the $p$-energy form $(\norm{\,\cdot\,}_{KS^{\theta}_{p}(X)}^{p}, KS^{\theta}_{p}(X))$ is reducible when $\theta \in (0,\hdim/p)$.

    Let $\pwalk$ be the $p$-walk dimension of the $n$-dimensional standard Sierpi\'nski gasket $K^+$, i.e., 
    $\pwalk = \log{((n + 1)\rho_{p})}/\log{2}$ where $\rho_{p}$ is the $p$-scaling factor of
    $K^+$ used in constructing the analog of the Sobolev space $\mathcal{F}_p$ on the gasket
    (see \cite[Subsection 9.2]{KS1} for further details on the $p$-walk dimension of Sierpi\'nski gaskets).
    From \cite[Theorems 5.16, 5.26, Corollary 5.27, Proposition 5.28]{KS2} and
    Lemma~\ref{lem:B=KS}\ref{it:besovrelation} above, we know that
    $\theta_{p}(K^{\pm}) = \theta_{p}^{\ast}(K^{\pm}) = \pwalk/p$. 
    It is known that $\pwalk > p$ and $\pwalk > \hdim$ for any $p \in (1,\infty)$; see \cite[Theorems~9.13, B.8 and (8.39)]{KS1}
    and \cite[Proposition 3.3]{Kig}. In the next theorem we determine $\theta_{p}(X)$ and $\theta_{p}^{\ast}(X)$ (note that the Ahlfors regular conformal dimension of the $n$-dimensional standard Sierpi\'{n}ski gasket is $1$;
    see, e.g.,~\cite[Theorem~B.8]{KS1}).

    \begin{thm}\label{thm:critical.gluedSG}
    	In the above setting of $X = K^{+} \cup K^{-}$, where each $K^{\pm}$ is the $n$-dimensional
	Sierpi\'nski gasket, we have  
	$\theta_{p}(X) = \theta_{p}^{\ast}(X) = \frac{\pwalk}{p}$ for $1<p<\infty$.
	\end{thm}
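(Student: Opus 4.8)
The plan is to prove the two inequalities $\theta_{p}(X)\le \pwalk/p$ and $\theta_{p}^{\ast}(X)\ge \pwalk/p$; since $\theta_{p}^{\ast}(X)\le\theta_{p}(X)$ always holds, these squeeze both exponents to $\pwalk/p$. Throughout I would use: the equalities $\theta_{p}(K^{\pm})=\theta_{p}^{\ast}(K^{\pm})=\pwalk/p$ together with the fact that the $p$-energy domain $\mathcal{F}_{p}(K^{\pm})$ coincides with $KS^{\pwalk/p}_{p}(K^{\pm})$ and contains cutoff functions localized at any vertex, all available from \cite{KS1,KS2}; the strict inequality $\pwalk>\hdim$; the fact recorded through \eqref{e:glueSG.KSupper}--\eqref{e:glueSG.KSlower} that $\chi_{K^{+}}\in B^{\theta}_{p,p}(X)$ if and only if $0<\theta<\hdim/p$; and Ahlfors $\hdim$-regularity of $\mu$, both on $X$ and on each $K^{\pm}$, which gives $\mu(B(x,r)\cap K^{\pm})\approx\mu(B(x,r))\approx r^{\hdim}$ for $x\in K^{\pm}$ and $0<r<\diam X$. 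This last point produces a bounded restriction map $B^{\theta}_{p,p}(X)\ni u\mapsto u|_{K^{\pm}}\in B^{\theta}_{p,p}(K^{\pm})$, because replacing $\mu(B(x,\cdot))$ by the comparable quantity $\mu(B(x,\cdot)\cap K^{\pm})$ in the defining integral changes it only by a bounded factor.

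For $\theta_{p}(X)\le \pwalk/p$: fix $\theta>\pwalk/p$ and $u\in B^{\theta}_{p,p}(X)$. By the restriction map, $u|_{K^{\pm}}\in B^{\theta}_{p,p}(K^{\pm})$, and since $\theta>\pwalk/p=\theta_{p}(K^{\pm})$ --- an upper bound, by definition, for the exponents whose Besov space carries non-constant functions --- each of $u|_{K^{+}}$ and $u|_{K^{-}}$ is $\mu$-a.e.\ constant, say equal to $a$ and $b$. Then $u=a\chi_{K^{+}}+b\chi_{K^{-}}$ $\mu$-a.e., so $(a-b)\chi_{K^{+}}=u-b\in B^{\theta}_{p,p}(X)$ (adding constants is harmless since $\mu(X)<\infty$); but $\theta>\hdim/p$ forces $\chi_{K^{+}}\notin B^{\theta}_{p,p}(X)$, whence $a=b$ and $u$ is constant. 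Therefore $B^{\theta}_{p,p}(X)$ consists of constants for all $\theta>\pwalk/p$, which gives the inequality.

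For $\theta_{p}^{\ast}(X)\ge \pwalk/p$: I would fix $\theta<\pwalk/p$ and show $B^{\theta}_{p,p}(X)$ is dense in $L^{p}(X)$. Two preliminaries: (i) there is a constant $c\in(0,1)$ with $d(x,y)\ge c\,(d(x,o)+d(y,o))$ for all $x\in K^{+}$, $y\in K^{-}$ --- because $K^{+}$ and $K^{-}$ lie in opposite closed half-spaces and, inside these, in solid cones about the $x_{n}$-axis with common apex $o$; (ii) for each small $\delta>0$ the functions $g\in B^{\theta}_{p,p}(K^{+})\cap L^{\infty}(K^{+})$ with $g\equiv0$ on $B(o,\delta)\cap K^{+}$ are dense in $L^{p}(K^{+})$, and similarly on $K^{-}$. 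For (ii): start from density of $B^{\theta}_{p,p}(K^{+})$ in $L^{p}(K^{+})$ (valid since $\theta<\theta_{p}^{\ast}(K^{+})$), truncate with Lemma~\ref{lem:normal_cont} to land in $L^{\infty}$, and multiply by a cutoff $\psi_{\delta}$ with $0\le\psi_{\delta}\le1$, $\psi_{\delta}\equiv0$ on $B(o,\delta)\cap K^{+}$, $\psi_{\delta}\equiv1$ on $K^{+}\setminus B(o,2\delta)$; Lemma~\ref{lem:leibniz} keeps the product in $B^{\theta}_{p,p}(K^{+})\cap L^{\infty}$ once $\psi_{\delta}\in B^{\theta}_{p,p}(K^{+})$, and for that one takes $\psi_{\delta}$ in $\mathcal{F}_{p}(K^{+})=KS^{\pwalk/p}_{p}(K^{+})=B^{\pwalk/p}_{p,\infty}(K^{+})\subset B^{\theta}_{p,p}(K^{+})$, the inclusion coming from Lemma~\ref{lem:B=KS}; finally $\|g-g\psi_{\delta}\|_{L^{p}}\le\|g\|_{\infty}\,\mu(B(o,2\delta)\cap K^{+})^{1/p}\to0$ as $\delta\to0$. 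Granting (i)--(ii): given a bounded $f\in L^{p}(X)$ and $\varepsilon>0$, pick one $\delta$ and $g^{\pm}\in B^{\theta}_{p,p}(K^{\pm})\cap L^{\infty}(K^{\pm})$ vanishing on $B(o,\delta)\cap K^{\pm}$ with $\|f|_{K^{\pm}}-g^{\pm}\|_{L^{p}(K^{\pm})}<\varepsilon$, and set $g:=g^{+}\chi_{K^{+}}+g^{-}\chi_{K^{-}}$. Splitting $\|g\|_{B^{\theta}_{p,p}(X)}^{p}$ into the four pieces over $K^{\pm}\times K^{\pm}$: the two ``diagonal'' pieces are comparable to $\|g^{\pm}\|_{B^{\theta}_{p,p}(K^{\pm})}^{p}<\infty$ by the ball comparison; in each ``cross'' piece the integrand is nonzero only where $g^{+}(x)\ne0$ or $g^{-}(y)\ne0$, i.e.\ only where $x\notin B(o,\delta)$ or $y\notin B(o,\delta)$, and there $d(x,y)\ge c\delta$ by (i), so that piece is at most a constant depending on $\delta$ (of order $(c\delta)^{-\theta p-\hdim}$) times $\|g^{+}\|_{L^{p}(K^{+})}^{p}\mu(K^{-})+\|g^{-}\|_{L^{p}(K^{-})}^{p}\mu(K^{+})<\infty$, using $\mu(B(x,r))\gtrsim r^{\hdim}$. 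Hence $g\in B^{\theta}_{p,p}(X)$, while $\|f-g\|_{L^{p}(X)}^{p}=\|f|_{K^{+}}-g^{+}\|_{L^{p}(K^{+})}^{p}+\|f|_{K^{-}}-g^{-}\|_{L^{p}(K^{-})}^{p}<2\varepsilon^{p}$. So $B^{\theta}_{p,p}(X)$ is dense in $L^{p}(X)$ for every $\theta<\pwalk/p$, and taking the supremum gives $\theta_{p}^{\ast}(X)\ge\pwalk/p$.

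The main obstacle I foresee is precisely the step ``$g\in B^{\theta}_{p,p}(X)$'': the cross-term is the only one sensitive to the gluing, and it is finite only because we forced $g^{\pm}$ to vanish near $o$ and because of the cone geometry at $o$. Moreover, since $\pwalk>\hdim$, one cannot take the cutoffs $\psi_{\delta}$ to be indicators of small cells --- such indicators are not Besov once $\theta\ge\hdim/p$, exactly as $\chi_{K^{+}}$ fails in the lower bound \eqref{e:glueSG.KSlower} --- so they must be drawn from the $p$-energy domain $KS^{\pwalk/p}_{p}(K^{\pm})$, which is where the genuinely fractal input of \cite{KS1,KS2} is used. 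Combining the two inequalities with $\theta_{p}^{\ast}(X)\le\theta_{p}(X)$ then yields $\theta_{p}(X)=\theta_{p}^{\ast}(X)=\pwalk/p$.
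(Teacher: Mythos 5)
Your proof is correct. The upper bound $\theta_p(X)\le\pwalk/p$ is essentially the paper's argument (restrict to each copy, use $\theta_p(K^{\pm})=\pwalk/p$ to force $u|_{K^{\pm}}$ constant, then use $\chi_{K^+}\notin B^{\theta}_{p,p}(X)$ for $\theta>\hdim/p$ to match the constants); the only cosmetic difference is that you justify the restriction via Ahlfors regularity, where the paper passes through $B^{\theta}_{p,\infty}$ via Lemma~\ref{lem:B=KS}. For the lower bound you take a genuinely different route. The paper proves that $B^{\pwalk/p}_{p,\infty}(X)$ is dense in $C(X)$ by uniform approximation: it approximates $u_{\pm}$ on each copy in sup norm, subtracts $2\varepsilon$ and takes positive parts to force vanishing on a ball $B(o,\delta)$, and then concludes membership of the glued function at the critical exponent $\pwalk/p$ itself from the locality of the $KS^{\pwalk/p}_p$ seminorm (for $r<\delta/2$ the cross terms simply do not appear). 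You instead work directly at a subcritical exponent $\theta<\pwalk/p$ with $L^p$ approximation, kill a neighborhood of $o$ by multiplying with a cutoff drawn from $B^{\pwalk/p}_{p,\infty}(K^{\pm})\subset B^{\theta}_{p,p}(K^{\pm})$, and then verify finiteness of the genuinely nonlocal double integral by hand, using the cone separation $d(x,y)\gtrsim d(x,o)+d(y,o)$ for $x\in K^+$, $y\in K^-$ together with $\mu(B(x,r))\gtrsim r^{\hdim}$ to control the cross term by $C(\delta)\bigl(\|g^+\|_{L^p}^p+\|g^-\|_{L^p}^p\bigr)$. Your version costs an extra geometric input (the cone estimate at $o$, which does hold for the simplex-based gasket) and a quantitative volume bound, but it buys a density statement proved directly in $B^{\theta}_{p,p}(X)$ without detouring through $C(X)$, and it makes transparent exactly why the gluing point is harmless once the approximants vanish near $o$. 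Two small points of hygiene: your preliminary (ii) as literally stated (density, for each fixed $\delta$, of functions vanishing on $B(o,\delta)$) is false --- what is true and what you actually use is that the union over $\delta>0$ of these classes is dense, with $\delta$ chosen after $\varepsilon$; and the existence of the cutoff $\psi_{\delta}$ in $B^{\pwalk/p}_{p,\infty}(K^{+})$ deserves a word --- it follows either from the cited cutoff functions in $\mathcal{F}_p(K^{+})$ or, as the paper in effect does, from density of $B^{\pwalk/p}_{p,\infty}(K^{+})$ in $C(K^{+})$ followed by normal contractions.
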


	\begin{proof}
		We first show that $\theta_{p}(X) = \pwalk/p$.
		Since $B_{p,\infty}^{\pwalk/p}(K^{\pm}) \subset C(K^{\pm})$ and
		$B_{p,\infty}^{\pwalk/p}(K^{\pm})$ is dense in $C(K^{\pm})$ by~\cite[Corollary 9.11]{KS1}
		and~\cite[Theorem 5.26]{KS2}, we have $\theta_{p}(X) \ge \pwalk/p$. 
		Indeed, by this density we can find a non-constant function $u\in B_{p,\infty}^{\pwalk/p}(K^+)$,
		and then its reflection $v$ given by
		\[
		 v(x)=\begin{cases} u(x) &\text{ if }x\in K^+,\\
		    u(-x) &\text{ if }x\in K^-, \end{cases}
		\]
		belongs to $B_{p,\infty}^{\pwalk/p}(X)$, and so we have a non-constant function in $B_{p,\infty}^{\pwalk/p}(X)$.

		For any $\theta > \pwalk/p$ and $u \in B^{\theta}_{p,p}(X)$, we have from
		Lemma~\ref{lem:B=KS}~\ref{it:besovrelation} that $u|_{K^{\pm}} \in B^{\theta}_{p,\infty}(K^{\pm})$.
		Then $u|_{K^{+}}$ and $u|_{K^{-}}$ must be constant functions since $\theta_{p}(K^{\pm}) = \pwalk/p$. 
		Since $\chi_{K^{+}} \not\in B^{\theta}_{p,p}(X)$ by the discussion preceding the statement of the theorem being proved here,
		and since $\theta > \pwalk/p > \hdim/p$, the function $u$ has to be
		constant on $X$. Hence, $\theta_{p}(X) \le \pwalk/p$.
		The proof of $\theta_{p}(X) = \pwalk/p$ is completed.

		Next we prove that $\theta_{p}^{\ast}(X) = \pwalk/p$.
		It suffices to show that $B^{\pwalk/p}_{p,\infty}(X)$ is dense in $C(X)$; indeed, if this is true, then we have
		from Lemma~\ref{lem:B=KS}~\ref{it:besovrelation}
		and the fact that $C(X)$ is dense in $L^p(X)$
		that $B^{\theta}_{p,p}(X)$ is dense in
		$L^{p}(X)$ for any
		$\theta < \pwalk/p$ and hence $\theta_{p}^{\ast}(X) \ge \pwalk/p$. (Recall that
		$\theta_{p}^{\ast}(X) \le \theta_{p}(X) = \pwalk/p$.) 

		To show that $B^{\pwalk/p}_{p,\infty}(X)$ is dense in $C(X)$, let $u \in C(X)$. 
		We can assume that $u(o) = 0$ by adding a constant function.
		Recall that $u_{+}(x) \coloneqq \max\{ 0, u(x) \}$ and set $u_{-} \coloneqq u_{+} - u$.
		Since $B^{\pwalk/p}_{p,\infty}(K^{\pm})$ is dense in $C(K^{\pm})$, for any $\varepsilon > 0$ there exist four continuous functions
		$u^{K^{+}}_{\pm,\varepsilon} \in {B^{\pwalk/p}_{p,\infty}(K^{+})}$,
		$u^{K^{-}}_{\pm,\varepsilon} \in {B^{\pwalk/p}_{p,\infty}(K^{-})}$
		such that 
		\[
		\sup_{x \in K^{+}}\abs{u_{\pm}(x) - u_{\pm,\varepsilon}^{K^{+}}(x)} \le \varepsilon, \ \text{ and }\
		\sup_{x \in K^{-}}\abs{u_{\pm}(x) - u_{\pm,\varepsilon}^{K^{-}}(x)} \le \varepsilon.
		\]
		We can also assume that $u_{\pm,\eps}^{K^+}$ and $u_{\pm,\eps}^{K^-}$
		are nonnegative.
		Since $u(o)=0$ and $u_{\pm,\eps}^{K^+}, u_{\pm,\eps}^{K^-}$ are continuous, there exists $\delta > 0$ such that
		\[
		\sup_{x \in B(o,\delta) \cap K^{+}}\abs{u_{\pm,\varepsilon}^{K^{+}}(x)} \le 2\varepsilon\ \text{ and }\
		\sup_{x \in B(o,\delta) \cap K^{-}}\abs{u_{\pm,\varepsilon}^{K^{-}}(x)} \le 2\varepsilon.
		\]
		Now we set
		\[
		u_{\varepsilon} \coloneqq \bigl[(u_{+,\varepsilon}^{K^{+}} - 2\varepsilon)_{+}  - (u_{-,\varepsilon}^{K^{+}} - 2\varepsilon)_{+}\bigr]\chi_{K^{+}} + \bigl[(u_{+,\varepsilon}^{K^{-}} - 2\varepsilon)_{+}  - (u_{-,\varepsilon}^{K^{-}} - 2\varepsilon)_{+}\bigr]\chi_{K^{-}}.
		\]
		Then $u_\eps\in C(X)$.
		Note that $u_{\varepsilon} = 0$ on $B(o,\delta)$ and that $\norm{u - u_{\varepsilon}}_{\sup} \le 3\varepsilon$.
		We conclude that $u_{\varepsilon} \in B^{\pwalk/p}_{p,\infty}(X)$ by using the ``locality'' of
		$\norm{\,\cdot\,}_{KS^{\pwalk/p}_{p}(X)}$; indeed,
		\[
		\norm{u_{\varepsilon}}_{KS^{\pwalk/p}_{p}(X)}^{p}
		\le \norm{u_{\varepsilon}|_{K^{+}}}_{KS^{\pwalk/p}_{p}(K^{+})}^{p}
		+ \norm{u_{\varepsilon}|_{K^{-}}}_{KS^{\pwalk/p}_{p}(K^{-})}^{p}.
		\]
		Therefore, $B^{\pwalk/p}_{p,\infty}(X)$ is dense in $C(X)$.
	\end{proof}
\end{example}

\begin{example}[Gluing copies of the Sierpi\'{n}ski carpet]\label{ex:glueSC}
In this example, we consider $X$ to be the union of
	two isometric copies of
	the planar standard Sierpi\'{n}ski carpet glued at a point. We confine ourselves to the planar case unlike in
	Examples~\ref{ex:bow-tie} and \ref{ex:glueSG}, because the construction of a self-similar $p$-energy form and its corresponding Sobolev analog $\mathcal{F}_p$ for all $1<p<\infty$ including the case where $p$ is less than or equal to the Ahlfors regular conformal dimension (denoted by $d_{\mathrm{ARC}}$ below)
	is currently known only for the planar carpet.

	Let $K$ be the
		standard Sierpi\'nski carpet, rotated so that it is symmetric about the line $\{ y=x \}$ in
		$\mathbb{R}^2$ and located in the quadrant $\{ x \le 0, y \le 0 \}$
		and has a vertex at $o \coloneqq (0,0)$, $K^+ \coloneqq K$ and $K^-$ be the reflection of
		$K$ in the line $\{ y=-x \}$, and then set $X=K^+\cup K^-$ (see Figure \ref{fig:gluedSC}). 
    Let $d$ be the Euclidean metric (restricted on $X$) and $\mu$ be the $\hdim$-dimensional Hausdorff measure on $X$,
    where $\hdim \coloneqq \log{8}/\log{3}$.
    Then $\mu$ is Ahlfors $\hdim$-regular on $X$, i.e., \eqref{e:AR} holds.
    Similar to \eqref{e:glueSG.KSupper} and \eqref{e:glueSG.KSlower}, we can estimate
    \begin{equation}\label{e:gluedSC.KS}
    	\int_{X}\vint{B(x,r)}\frac{\abs{\chi_{K^{+}}(x) - \chi_{K^{+}}(y)}^{p}}{r^{p\theta}}\,\mu(dy)\,\mu(dx) \approx r^{\hdim - p\theta}. 
    \end{equation}
    Hence $\chi_{K^{+}} \in B_{p,p}^{\theta}(X)$ if and only if $\theta \in (0,\hdim/p)$, and $\chi_{K^{+}} \in KS^{\theta}_{p}(X)$
    if and only if $\theta \in (0,\hdim/p]$.
    Also, we have $\norm{\chi_{K^{+}}}_{KS^{\theta}_{p}(X)} = 0$ for $\theta \in (0,\hdim/p)$ and
    $\norm{\chi_{K^{+}}}_{KS^{\hdim/p}_{p}(X)} > 0$.
    In particular, $(\norm{\,\cdot\,}_{KS^{\theta}_{p}(X)}^{p}, KS^{\theta}_{p}(X))$ is reducible when $\theta \in (0,\hdim/p)$.
    \begin{figure}[tb]\centering
		\includegraphics[height=150pt]{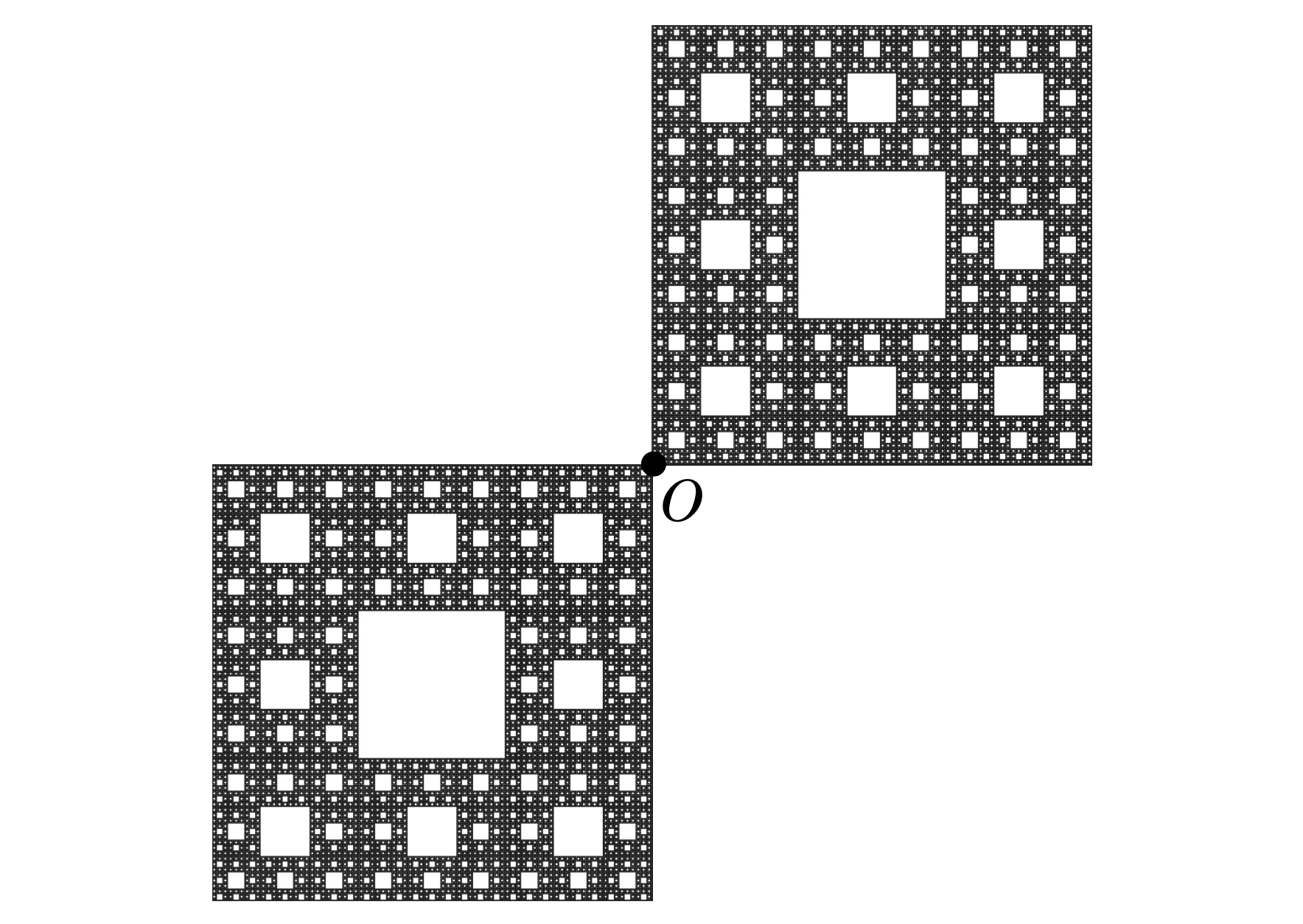} 
		\caption{Gluing of two copies of the Sierpi\'{n}ski carpet}\label{fig:gluedSC}
	\end{figure}

    Similar to Example \ref{ex:glueSG}, from \cite[Theorems 1.1~(ii), 1.4 and proof of Theorem 7.1]{MS} and Lemma~\ref{lem:B=KS}~\ref{it:besovrelation}, we know that $\theta_{p}(K^{\pm}) = \theta_{p}^{\ast}(K^{\pm}) = \pwalk/p$ where $\pwalk$ is the $p$-walk dimension of the Sierpi\'{n}ski carpet.
    By \cite[Theorem 2.24]{Shi} or \cite[Theorem 9.8]{KS1}, we have $\pwalk > p$ for any $p \in (1,\infty)$.
    Next let us recall
    a relation with the \emph{Ahlfors regular conformal dimension} $d_{\mathrm{ARC}}$ of the Sierpi\'{n}ski
    carpet that is discussed in the end of introduction.
    From \cite[Corollary 3.7]{BK} and \cite[Corollary 1.4]{CP} (see also \cite[Proof of Proposition 1.7]{CC}),
    we know that $\pwalk > \hdim$ if and only if $p > d_{\mathrm{ARC}}$, that $\pwalk < \hdim$ if and only if
    $p < d_{\mathrm{ARC}}$, and that $\pwalk = \hdim$ for $p = d_{\mathrm{ARC}}$.
    Also, $d_{\mathrm{ARC}} \ge 1 + \frac{\log{2}}{\log{3}}$ by \cite[Remark 1]{BT}.
    We can determine $\theta_{p}(X)$ and $\theta_{p}^{\ast}(X)$ as in Theorem \ref{thm:critical.gluedSC},
    in particular, there is a gap between $\theta_{p}(X)$ and $\theta_{p}^{\ast}(X)$ when $1<p <d_{\mathrm{ARC}}$. 

\begin{proof}[Proof of Theorem \ref{thm:critical.gluedSC} for the glued Sierpi\'nski carpets] 
%
		By \cite[Theorems~1.1 and~1.4]{MS}, $B^{\pwalk/p}_{p,\infty}(K^{\pm}) \cap C(K^{\pm})$ is dense in $C(K^{\pm})$ for any $p \in (1,\infty)$.
		Hence we can show $\theta_{p}(X) = \pwalk/p$ when $\pwalk > \hdim$ in the same way as Theorem \ref{thm:critical.gluedSG}.
		Assume that $\pwalk \le \hdim$.
		Since $\chi_{K^{+}} \in B^{\theta}_{p,p}(X)$ if and only if $\theta < \hdim/p$, we have $\theta_{p}(X) \ge \hdim/p$.
		To see that $\theta_{p}(X) \le \hdim/p$, let $\theta > \hdim/p \ge \pwalk/p$ and let $u \in B^{\theta}_{p,p}(X)$.
		Then by Lemma~\ref{lem:Besov-KS} we know that $u\in KS^\theta_p(X)$
		and so by Lemma~\ref{lem:B=KS}\ref{it:besovrelation} we also have that $u\in B^{\pwalk/p}_{p,p}(X)$.
		Note that then $u\vert_{K^{\pm}}\in B^{\pwalk/p}_{p,p}(K^{\pm})$.
		Now by Lemma~\ref{lem:Besov-KS} again, we know that 
		$\norm{u|_{K^{+}}}_{KS^{\pwalk/p}_{p}(K^{+})} = \norm{u|_{K^{-}}}_{KS^{\pwalk/p}_{p}(K^{-})} = 0$.
		Hence we have
		from \cite[Theorems~1.1 and 1.4]{MS} that $u|_{K^{+}}$ and $u|_{K^{-}}$ are constant. Since
		$\chi_{K^{+}} \not\in B^{\theta}_{p,p}(X)$, $u$ has to be a constant function, whence it follows that $\theta_{p}(X) \le \hdim/p$.

		Next we prove that $\theta_{p}^{\ast}(X) = \pwalk/p$.
		Since $B^{\pwalk/p}_{p,\infty}(K^{\pm}) \cap C(K^{\pm})$ is dense in $C(K^{\pm})$,
		we can show that
		$\theta_{p}^{\ast}(X) \ge \pwalk/p$ in the same manner as in the proof of Theorem~\ref{thm:critical.gluedSG}.
		Since $B^{\theta}_{p,\infty}(K^{+})$ and $B^{\theta}_{p,\infty}(K^{-})$ have only constant functions when $\theta > \pwalk/p$, $B^{\theta}_{p,\infty}(X)$ can not be dense in $L^{p}(X,\mu)$ for such $\theta$.
		Hence, by Lemma~\ref{lem:B=KS}~\ref{it:besovrelation},
		$B^{\theta}_{p,p}(X)$ is not dense in $L^{p}(X,\mu)$ for any $\theta > \pwalk/p$, 
		from which it follows that $\theta_{p}^{\ast}(X) \le \pwalk/p$.
	\end{proof}

	The following proposition is an analog of Proposition~\ref{prop:KSdiff.bowtie} where now $X$ is the
	glued Sierpi\'nski carpet. In this case, when $p$ is the Ahlfors regular conformal dimension $d_{\mathrm{ARC}}$ of the carpet,
	we must have $\theta_{p}(X) = \theta_{p}^{\ast}(X)$. 

    \begin{prop}\label{prop:KSdiff.gluedSC}
    	Let $X$ be the glued Sierpi\'nski carpet and let $p = d_{\mathrm{ARC}}$. 
    	Set $E_{1} \coloneqq K^{+}$ and $E_{2} \coloneqq K^{-}$ for ease of notation.
    	\begin{enumerate}[\rm(1)]
		\item\label{it:determine.gluedSC} It follows that
		\begin{equation*}
			KS^{\theta_p}_{p}(X)
			= \biggl\{ u_{1}\chi_{E_{1}} + u_{2}\chi_{E_{2}} \biggm|
			\begin{minipage}{160pt}
				$u_{i} \in L^{p}(X,\mu), u_{i}|_{E_{i}} \in KS^{\theta_p}_{p}(E_{i})$,
				$i \in \{ 1,2 \}$,
				$I_{KS}(u_{1},u_{2}) < \infty$
			\end{minipage}
			\biggr\},
		\end{equation*}
		where
		\begin{equation*}
			I_{KS}(u_{1},u_{2}) \coloneqq \limsup_{r \to 0^+}\int_{E_{1} \cap B(o,r)}\int_{E_{2} \cap B(o,r)}\frac{\abs{u_{1}(x) - u_{2}(y)}^{p}}{r^{\hdim + p\theta_{p}}}\,dy\,dx.
		\end{equation*}
		\item\label{it:relation.gluedSC} $KS^{\theta_{p}}_{p}(X) \subsetneq \{ u_{1}\chi_{E_{1}} + u_{2}\chi_{E_{2}} \mid u_{i} \in L^{p}(X,\mu), u_{i}|_{E_{i}} \in KS^{\theta_{p}}_{p}(E_{i}), i \in \{ 1,2 \} \}$.
	\end{enumerate}
	\end{prop}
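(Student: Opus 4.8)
The plan is to follow the proof of Proposition~\ref{prop:KSdiff.bowtie}, with the Ahlfors $\hdim$-regularity~\eqref{e:AR} of $X$ (and of each $E_i$) playing the role of the Euclidean scaling there, and using the elementary fact that, since $E_1 = K^{+}$ and $E_2 = K^{-}$ lie in opposite closed quadrants of $\mathbb{R}^2$ meeting only at $o$, one has $d(x,y) \ge \max\{d(x,o),d(y,o)\}$ whenever $x \in E_1$ and $y \in E_2$. Consequently, for $x \in E_i$ the ball $B(x,r)$ can meet $E_j$ ($j\neq i$) only if $x \in B(o,r)$, in which case $B(x,r) \subset B(o,2r)$, and $B(o,r/2) \subset B(x,r)$ if moreover $x \in B(o,r/2)$; in all cases $\mu(B(x,r)) \approx \mu(B(x,r)\cap E_i) \approx r^{\hdim}$. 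Recall also that here $\pwalk = \hdim$ (because $p = d_{\mathrm{ARC}}$), so $\theta_p = \theta_p(X) = \pwalk/p$ and $\hdim + p\theta_p = 2\hdim$ (cf.\ Theorem~\ref{thm:critical.gluedSC}).

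The heart of Part~\ref{it:determine.gluedSC} is a decomposition of the Korevaar--Schoen energy. Given $u = u_1\chi_{E_1} + u_2\chi_{E_2} \in L^p(X,\mu)$ and $r > 0$, I would split the inner integral over $B(x,r)$ into its parts in $E_1$ and in $E_2$ (the singleton $\{o\}$ being $\mu$-null) and use $\mu(B(x,r)) \approx \mu(B(x,r)\cap E_i) \approx r^{\hdim}$ to obtain
\begin{align*}
&\int_X \vint{B(x,r)}\frac{\abs{u(x)-u(y)}^p}{r^{p\theta_p}}\,d\mu(y)\,d\mu(x) \\
&\qquad \approx \int_{E_1}\vint{B(x,r)\cap E_1}\frac{\abs{u_1(x)-u_1(y)}^p}{r^{p\theta_p}}\,d\mu(y)\,d\mu(x) \\
&\qquad\qquad + \int_{E_2}\vint{B(x,r)\cap E_2}\frac{\abs{u_2(x)-u_2(y)}^p}{r^{p\theta_p}}\,d\mu(y)\,d\mu(x) + \mathsf{C}_r(u_1,u_2),
\end{align*}
where $\mathsf{C}_r(u_1,u_2)$ collects the two cross contributions $r^{-2\hdim}\int_{E_i}\int_{B(x,r)\cap E_j}\abs{u_i(x)-u_j(y)}^p\,d\mu(y)\,d\mu(x)$, $\{i,j\}=\{1,2\}$. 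The first two summands are precisely the scale-$r$ Korevaar--Schoen integrands for $u_1$ on $E_1$ and for $u_2$ on $E_2$, so their $\limsup$ as $r \to 0^+$ are $\norm{u_1|_{E_1}}_{KS^{\theta_p}_p(E_1)}^p$ and $\norm{u_2|_{E_2}}_{KS^{\theta_p}_p(E_2)}^p$. For $\mathsf{C}_r$ I would invoke the localization above: the integrand forces $x \in B(o,r)$, and sandwiching $B(x,r)$ between $B(o,r/2)$ and $B(o,2r)$ (using the symmetry $x \leftrightarrow y$ to identify the two cross pieces) shows $\limsup_{r\to 0^+}\mathsf{C}_r(u_1,u_2) \approx I_{KS}(u_1,u_2)$. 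Since the three summands are nonnegative, $\limsup_{r\to 0^+}$ of the left-hand side is finite if and only if all three limits are; as every $u \in L^p(X,\mu)$ is of the form $u_1\chi_{E_1}+u_2\chi_{E_2}$ with $u_i = u|_{E_i}$, this yields Part~\ref{it:determine.gluedSC} in both directions.

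For Part~\ref{it:relation.gluedSC}, by Part~\ref{it:determine.gluedSC} it suffices to produce $u_1|_{E_1} \in KS^{\theta_p}_p(E_1)$ and $u_2|_{E_2} \in KS^{\theta_p}_p(E_2)$ with $I_{KS}(u_1,u_2) = \infty$. Note first that if both functions are \emph{bounded} then $I_{KS}(u_1,u_2) \lesssim (\norm{u_1}_{L^\infty} + \norm{u_2}_{L^\infty})^p < \infty$ by the same localization, so an unbounded function is unavoidable. The point is that $p = d_{\mathrm{ARC}}$ is the critical exponent: because $\pwalk = \hdim$, the Sobolev-type space $KS^{\theta_p}_p(K^{+}) = B^{\pwalk/p}_{p,\infty}(K^{+}) = \mathcal{F}_p$ fails to embed into $L^\infty(K^{+})$, in analogy with the failure of $W^{1,n}(\mathbb{R}^n) \subset L^\infty$ witnessed by $\log(-\log\abs{x})$ in Proposition~\ref{prop:KSdiff.bowtie}. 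Using the self-similar structure of $K^{+}$ one obtains such an unbounded $v \in KS^{\theta_p}_p(K^{+})$ for which $\limsup_{r\to 0^+}\vint{B(o,r)\cap K^{+}}\abs{v}^p\,d\mu = \infty$; then, taking $u_1 = v$ and $u_2 \equiv 0$, the computation of the previous paragraph gives $I_{KS}(v,0) \approx \limsup_{r\to 0^+}\vint{B(o,r)\cap K^{+}}\abs{v}^p\,d\mu = \infty$, so $v\chi_{E_1}$ lies in the right-hand set of Part~\ref{it:relation.gluedSC} but not in $KS^{\theta_p}_p(X)$.

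I expect the main obstacle to be this last construction: producing $v \in KS^{\pwalk/p}_p(K^{+}) = \mathcal{F}_p$ with logarithmic-type blow-up concentrated at the gluing vertex $o$ in the critical case $\pwalk = \hdim$. This should follow from the description of $\mathcal{F}_p$ and the embedding theorems in~\cite{MS} (see also~\cite{Shi,Kig}) --- concretely, from the fact that the bounded/continuous embedding of $\mathcal{F}_p$ fails at and below $d_{\mathrm{ARC}}$ --- but it will require either quoting such an embedding statement in a form adapted to the vertex $o$, or carrying out an explicit iterated-logarithm construction on the carpet and estimating its Besov energy. By contrast, the measure-theoretic bookkeeping in Part~\ref{it:determine.gluedSC} (the cross-term localization, and replacing $\vint{B(x,r)}$ by the average over $B(x,r)\cap E_i$) is routine given~\eqref{e:AR}.
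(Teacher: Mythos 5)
Your proposal follows essentially the same route as the paper: part (1) is the same energy decomposition (diagonal Korevaar--Schoen terms on $E_1$ and $E_2$ plus cross terms localized at $o$ via Ahlfors regularity, exactly the ``minor modification'' of Proposition~\ref{prop:KSdiff.bowtie} the paper intends), and part (2) hinges, as in the paper, on producing an unbounded $v\in KS^{\theta_p}_p(K^+)$ blowing up at the gluing vertex and taking its zero extension. The one ingredient you flag as the main obstacle --- the existence of such a $v$ in the critical case $\pwalk=\hdim$ --- is precisely what the paper supplies by citing the construction in \cite[Proof of Theorem~2.7]{CCK} together with \cite[Theorem~1.4]{MS}, so your argument is complete once that reference is invoked.
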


	\begin{proof}
		The proof of~\ref{it:determine.gluedSC} can be obtained via minor modifications of the proof of
		Proposition~\ref{prop:KSdiff.bowtie}~\ref{it:determine.bowtie}, and we leave it to the interested reader to verify.
		By \cite[Proof of Theorem~2.7]{CCK}, \cite[Theorem~1.4]{MS} and the fact that $\pwalk = \hdim$ when $p = d_{\mathrm{ARC}}$ (see \cite[Remark~9.17]{MS}), there exists $v \in KS^{\theta_{p}}_{p}(K^{+})$ such that
		$\lim_{r \to 0^+}\essinf_{K^+ \cap B(o,r)}\abs{v} = \infty$.
		Once we obtain such a discontinuous function, then
		using the zero-extension $u$ of such a function $v$ to $K^-$, the proof of
		Proposition~\ref{prop:KSdiff.bowtie} verbatim tells us that $u \not\in KS^{\pwalk/p}_{p}(X)$. 
		The proof of \ref{it:relation.gluedSC} is now completed.
	\end{proof}
\end{example}

\section{Proof of Theorem~\ref{thm:main1}}

We now prove Theorem~\ref{thm:main1}; the proof is broken down step by step by the following lemmata. 

\begin{lem}\label{lem:bounded}
Let $\mu$ be a doubling measure on $X$. Suppose that $B^\theta_{p,p}(X)$ is $k$-dimensional for some $k\in {\mathbb N}$ as a vector space
{\rm (}hence $B^\theta_{p,p}(X)\ne \{0\}${\rm )}.  
Then the following hold. 
\begin{enumerate} [label=\textup{(\roman*)},align=left,leftmargin=*,topsep=2pt,parsep=0pt,itemsep=2pt]
	\item\label{it:besovbdd} Every function in $B^\theta_{p,p}(X)$ is bounded.
	\item\label{it:besovsimple} Every function $f\in B^\theta_{p,p}(X)$ is a simple function. Moreover, if 
	$\mu(X)<\infty$ and $k=1$, then
	$f$ is necessarily constant, and if 
	$\mu(X)<\infty$ and $k>1$ or $\mu(X)=\infty$ and 
	$k\ge 1$, then outside of a set of measure zero, $f$ takes on at most $k+1$ values.
	\item\label{it:besovdecomp} Suppose $k>1$.
		Then there is a collection of measurable subsets $E_i$, $i=1,\cdots, k$, of $X$ such that the collection
		$\{\chi_{E_i}\, :\, 1\le i\le k\}$ forms a basis for $B^\theta_{p,p}(X)$ and in addition, $0<\mu(E_i)<\infty$ for each
		$i=1,\cdots, k$, $\mu(E_i\cap E_j)=0$ whenever $i\ne j$, and 
		 if in addition we have that $\mu(X)<\infty$, then $\mu(X\setminus\bigcup_{j=1}^kE_j)=0$. 
\item $B^\theta_{p,p}(X)=\oplus_{i=1}^k B^\theta_{p,p}(E_i)$ as sets.
Moreover, the dimension of $B^\theta_{p,p}(E_i)$ is $1$ for all
$i=1,\cdots, k$. 
\end{enumerate}
\end{lem}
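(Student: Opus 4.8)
I would prove Lemma~\ref{lem:bounded} in the order (i)$\to$(ii)$\to$(iii)$\to$(iv). Write $V:=B^{\theta}_{p,p}(X)$; I would use throughout that $V$, being a $k$-dimensional subspace of the Banach space $L^{p}(X)$, is \emph{closed} in $L^{p}(X)$; that $V$ is stable under normal contractions and under the truncations $w_{\alpha,\beta}$ (Lemma~\ref{lem:normal_cont}); that once~(i) is known $V\subset L^{\infty}(X)$, so $V$ is stable under products (Lemma~\ref{lem:leibniz}) and hence is a commutative $\R$-algebra of functions; and that when $\mu(X)<\infty$ one has $\chi_{X}\in V$. \emph{Parts (i) and (ii).} Fix $f\in V$. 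The key point is that $\chi_{\{f>c\}}\in V$ for every $c>0$: the function $(f-c)_{+}=\max\{0,f-c\}$ is a normal contraction of $f$, hence lies in $V$; so does $n(f-c)_{+}$; and then $\min\{n(f-c)_{+},1\}=w_{0,1}(n(f-c)_{+})\in V$ by Lemma~\ref{lem:normal_cont}. As $n\to\infty$ these increase pointwise to $\chi_{\{f>c\}}$, which lies in $L^{p}(X)$ since $\mu(\{f>c\})\le c^{-p}\norm{f}_{L^{p}(X)}^{p}<\infty$; so $\chi_{\{f>c\}}\in V$ by dominated convergence and closedness of $V$. Since the indicators of a strictly decreasing family of sets are linearly independent, at most $k$ distinct ones occur among $\{\chi_{\{f>c\}}:c>0\}$, so the essential range of $f$ is finite in $(0,\infty)$; applying this to $-f$, it is finite in $(-\infty,0)$ too, so $f$ is bounded and simple. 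Moreover the indicators obtained from the positive values of $f$ are supported in $\{f>0\}$ and those from the negative values in $\{f<0\}$, so the whole collection is linearly independent in $V$; hence $f$ has at most $k$ nonzero values, thus at most $k+1$ values in all. If $\mu(X)<\infty$ and $k=1$, then $V=\R\chi_{X}$, so $f$ is constant.

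\emph{Part (iii).} Assume $k>1$ (the argument in fact works for $k\ge1$). Take a basis $f_{1},\dots,f_{k}$ of $V$; by~(ii) these are simple, so $X$ splits mod null into the set $G_{0}$ where all the $f_{i}$ vanish together with finitely many positive-measure cells $G_{1},\dots,G_{N}$ on each of which every $f_{i}$ is constant and some $f_{i}$ is nonzero (whence $\mu(G_{j})<\infty$ by Chebyshev). Every $f\in V$ vanishes on $G_{0}$ and is constant on each $G_{j}$, so $V\subset\operatorname{span}\{\chi_{G_{1}},\dots,\chi_{G_{N}}\}$. Declare $G_{j}\approx G_{j'}$ when every $f\in V$ takes the same value on both, and let $E_{1},\dots,E_{M}$ be the unions over equivalence classes. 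The evaluation map $V\to\R^{M}$, $f\mapsto(f|_{E_{i}})_{i=1}^{M}$, is an injective algebra homomorphism onto a subalgebra $W\subset\R^{M}=C(\{1,\dots,M\})$ that separates the $M$ coordinates and vanishes at no coordinate; by the finite-dimensional Stone--Weierstrass theorem, $W=\R^{M}$. Hence $M=\dim V=k$, each $\chi_{E_{i}}\in V$, and $\{\chi_{E_{i}}\}_{i=1}^{k}$ is a basis of $V$ ($k$ linearly independent elements of a $k$-dimensional space). The $E_{i}$ are pairwise disjoint unions of the $G_{j}$, so $0<\mu(E_{i})<\infty$; and when $\mu(X)<\infty$, $\chi_{X}\in V$ vanishes on $G_{0}$, forcing $\mu(G_{0})=0$, i.e. $\mu(X\setminus\bigcup_{i}E_{i})=0$.

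\emph{Part (iv).} By~(iii), $V=\operatorname{span}\{\chi_{E_{i}}\}_{i=1}^{k}=\bigoplus_{i=1}^{k}\R\chi_{E_{i}}$; it remains to show $B^{\theta}_{p,p}(E_{i})$ has dimension $1$, i.e. consists of constants only. (Then $\R\chi_{E_{i}}$ is exactly the zero-extension of $B^{\theta}_{p,p}(E_{i})$, so $V=\bigoplus_{i}B^{\theta}_{p,p}(E_{i})$; and when $\mu(X)<\infty$ the $E_{i}$ exhaust $X$, so this equals $\{f\in L^{p}(X):f|_{E_{i}}\in B^{\theta}_{p,p}(E_{i})\text{ for all }i\}$.) Suppose $g\in B^{\theta}_{p,p}(E_{i})$ is non-constant. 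Since $\mu(E_{i})<\infty$ we may subtract a constant and apply $w_{-1,1}$ (Lemma~\ref{lem:normal_cont} on $E_{i}$) to assume in addition that $g$ is bounded and still non-constant. Because $\chi_{E_{i}}\in V$, we have $\norm{\chi_{E_{i}}}_{B^{\theta}_{p,p}(X)}<\infty$; bounding the $E_{i}\times E_{i}$ part of $\norm{g\chi_{E_{i}}}_{B^{\theta}_{p,p}(X)}^{p}$ by $\norm{g}_{B^{\theta}_{p,p}(E_{i})}^{p}$ (using $\mu(B(x,r)\cap E_{i})\le\mu(B(x,r))$) and the two mixed parts by $\norm{g}_{L^{\infty}(E_{i})}^{p}$ times the corresponding parts of $\norm{\chi_{E_{i}}}_{B^{\theta}_{p,p}(X)}^{p}$, one obtains $\norm{g\chi_{E_{i}}}_{B^{\theta}_{p,p}(X)}^{p}\le\norm{g}_{B^{\theta}_{p,p}(E_{i})}^{p}+\norm{g}_{L^{\infty}(E_{i})}^{p}\norm{\chi_{E_{i}}}_{B^{\theta}_{p,p}(X)}^{p}<\infty$, so $g\chi_{E_{i}}\in V$. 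But $g\chi_{E_{i}}$ is non-constant on $E_{i}$, whereas every element of $V=\operatorname{span}\{\chi_{E_{j}}\}$ is constant on $E_{i}$ — a contradiction. Hence $\dim B^{\theta}_{p,p}(E_{i})=1$, which completes~(iv).

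\emph{Main obstacle.} I expect the crux to be part~(iii): upgrading the soft fact ``$V$ is a finite-dimensional algebra of simple functions'' to the structural statement ``$V=\operatorname{span}\{\chi_{E_{i}}\}$ for a genuine measurable partition'' via the finite-dimensional Stone--Weierstrass collapse. Closely behind it, and underlying all of (i)--(ii), is the verification that $\chi_{\{f>c\}}\in V$: the delicate points there are that each truncation/rescaling step must again be a normal contraction of a function already known to lie in $V$, and that $\chi_{\{f>c\}}\in L^{p}(X)$, so that the $L^{p}$-closedness of the finite-dimensional space $V$ can be invoked.
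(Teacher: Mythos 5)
Your proposal is correct, but for parts (i)--(iii) it takes a genuinely different route from the paper. For (i)--(ii) the paper never puts the indicators $\chi_{\{f>c\}}$ themselves into $V:=B^\theta_{p,p}(X)$; instead it produces $k+1$ truncations $\max\{f-a_i,0\}$ directly in $V$ and shows by an inductive support argument that they are linearly independent, contradicting $\dim V=k$. You instead upgrade the truncations to the actual superlevel indicators via the monotone approximation $\min\{n(f-c)_+,1\}\uparrow\chi_{\{f>c\}}$, Chebyshev (to get $\chi_{\{f>c\}}\in L^p$), and the $L^p$-closedness of the finite-dimensional subspace $V$ --- a soft functional-analytic ingredient the paper never invokes. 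This is a legitimate and arguably cleaner mechanism (and it makes (ii) almost immediate), at the cost of importing the closedness fact; the paper's version is more self-contained and works entirely inside the Besov energy estimates of Lemma~\ref{lem:normal_cont}. For (iii) the paper starts from the simple-function decompositions of a basis and iteratively disjointifies the level sets pairwise using Lemma~\ref{lem:leibniz} ($\chi_{A\cap B}=\chi_A\chi_B$), then counts; you take the common refinement $G_0,G_1,\dots,G_N$ in one step and collapse it by the finite-dimensional Stone--Weierstrass theorem applied to the algebra image of $V$ in $\R^M$. Both yield the same partition; yours makes the ``$V$ is a finite-dimensional function algebra'' structure explicit, while the paper's avoids any appeal to Stone--Weierstrass. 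Your part (iv) is essentially identical to the paper's (zero-extension of a hypothetical non-constant $g\in B^\theta_{p,p}(E_i)$, controlled by $\norm{\chi_{E_i}}_{B^\theta_{p,p}(X)}<\infty$). Two cosmetic points: ``at most $k$ distinct ones occur among $\{\chi_{\{f>c\}}:c>0\}$'' should read ``at most $k$ distinct \emph{nonzero} ones'' (the smallest set in a chosen chain could a priori be null, so the zero function may also appear), though this does not affect the conclusion that $f$ has at most $k$ nonzero essential values; and your parenthetical in (iv) correctly observes that the identification of $\oplus_i B^\theta_{p,p}(E_i)$ with the set in Theorem~\ref{thm:main1}\ref{it:directsum} uses $\mu(X\setminus\bigcup_iE_i)=0$, i.e.\ $\mu(X)<\infty$ --- a caveat the paper itself glosses over.
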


\begin{proof} {\bf Proof of (i):} Suppose that the dimension of $B^\theta_{p,p}(X)$ is finite and that there is an unbounded function
$f\in B^\theta_{p,p}(X)$. By considering $f_+, f_-$ separately, we may consider without loss of
generality that $f\ge 0$ (note that if $f\in  B^\theta_{p,p}(X)$, then $f_+, f_-\in B^\theta_{p,p}(X)$ by Lemma \ref{lem:normal_cont}).
Then we can find a strictly increasing sequence of positive integers $(n_i)_{i\in\N}$ such that
$\mu(f^{-1}((n_i,n_{i+1}]))>0$ for each $i\in\N$. Set
\begin{equation*}\label{e:cut.levelset}
	f_i(x):=\max\{f(x)-n_i,\, 0\},
\end{equation*}
then $f_i\in B^\theta_{p,p}(X)$ by Lemma \ref{lem:normal_cont}.

Note that $f_1$ is not a linear combination of any of up to $\ell$ many choices of functions
$f_{i_1},\cdots, f_{i_\ell}$ with $i_1,\cdots, i_\ell$ distinct from $1$, for all such linear combinations will vanish on the set
$f^{-1}((n_1,n_2])$ where $f_1$ is nonzero. Note also that $f_2$ cannot be a linear combination of $f_1$ and other $f_j$, $j\ne 2$,
either, as on the set $f^{-1}((n_2,n_3])$ the functions $f_j$, $j\ge 3$, vanish and so if $f_2$ were to be such a linear combination,
on that set we must have $f_2=af_1$ for some $a\ne 0$. This also is not possible as $f_1$ is nonzero on the set $f^{-1}((n_1,n_2])$
and $f_2$ and all $f_j$, $j>2$, vanish there. Hence $f_1$ and $f_2$ are linearly independent of each other and of all the other $f_j$, $j\ge 3$.
We have also proved that $\sum_{j=1}^2a_jf_j=0$ on $f^{-1}((n_1,n_3])$ implies that $a_1=a_2=0$.

Now we proceed by induction. Suppose we have shown that $f_1,\cdots, f_i$ are linearly independent of each other and of all the
other $f_j$, $j\ge i+1$ and that $\sum_{j=1}^ia_jf_j=0$ on $f^{-1}((n_1,n_{i+1}])$ implies that $a_j=0$ for $j=1,\cdots, i$.
We wish to show that $f_{i+1}$ is also independent of the other functions $f_j$, $j\ne i+1$. Indeed, if it is not,
then by considering the set $f^{-1}((n_1,n_{i+2}])$, we see that on this set we must have
$f_{i+1}=\sum_{j=1}^ia_i\, f_i$ with at least one of
$a_i$ nonzero. But then, on the set $f^{-1}((n_1,n_{i+1}])$ we have that $\sum_{j=1}^ia_j\, f_j=0$,
which then indicates that each $a_j=0$ for
$j=1,\cdots, i$. That is, $f_{i+1}$ cannot be a linear combination of the other functions $f_j$, $j\ne i$. It follows that
the collection $\{f_i\, :\, i\in\N\}$ is a linearly independent subcollection of $B^\theta_{p,p}(X)$, violating the finite
dimensionality of $B^\theta_{p,p}(X)$. Thus $f$ must be bounded. \\
{\bf Proof of (ii):} 
Let $f\in B^\theta_{p,p}(X)$ such that $f$ is not the zero function. 
Then both $f_+$ and $f_-$ are in $B^\theta_{p,p}(X)$, and so we first focus on
the possibility that $f\ge 0$ with $f \not\equiv 0$. 
We want to prove that there are positive real numbers $b_1,b_2,\cdots, b_l$ with $l\le k$ and $b_{i} < b_{i+1}$ for $i = 1,\dots,l-1$ such that
\[
\mu(X\setminus f^{-1}(\{b_1,\cdots, b_l,0\}))=0,
\]
or equivalently the support of the push-forward measure $f_{\ast}\mu$, $\mathrm{supp}(f_{\ast}\mu)$, is contained in $\{b_1,\cdots, b_l,0\}$. (Note that $\mathrm{supp}(f_{\ast}\mu) \subseteq [0,\norm{f}_{L^{\infty}}]$ since $f$ is bounded 
due to (i).)   
We prove this by contradiction.
Suppose the above claim fails. 
Then recalling that $f\ge 0$,
\[
\#\bigl\{ \alpha \in \mathbb{R} \bigm| \alpha\ge 0 \text{ and $\mu(\{ \abs{f-\alpha} < \varepsilon \}) > 0$ for all $\varepsilon > 0$} \bigr\} \ge k+2
\]
and we can find positive 
numbers
$a_2,\cdots, a_{k+2}$ with $a_{1} = 0$, $a_{k+2} = \norm{f}_{L^{\infty}}$ and $a_i<a_{i+1}$ for $i=1,\cdots, k+1$, such that
$\mu(f^{-1}((a_i,a_{i+1}]))>0$ for $i=1,\cdots, k+1$. 

As in the proof of (i), we consider the
functions $f_i$, $i=1,\cdots, k+1$, given by
\[
f_i(x)=\max\{f(x)-a_i,\, 0\}.
\]
Since $a_i\ge 0$, it follows from Lemma \ref{lem:normal_cont} that $f_i\in B^\theta_{p,p}(X)$. 
Now a repeat of the proof of (i) tells us that the collection
$\{f_1,\cdots, f_{k+1}\}\subset B^\theta_{p,p}(X)$ is linearly independent, violating the hypothesis that
the dimension of $B^\theta_{p,p}(X)$ is $k$. The claim now
follows for non-negative functions that are not identically zero.
In particular, for such functions, we can set $E_i:=f^{-1}(\{b_i\})$ for $i=1,\cdots, l\le k$, and 
see that 
\[
f=\sum_{i=1}^l b_i\, \chi_{E_i}.
\]
We now set $b_0:=0$, and by Lemma~\ref{lem:normal_cont},
note that for $i=1,\cdots, l$, the function $h_i$ given by $h_i(x)=\max\{0,\min\{f(x)-b_{i-1}, b_i-b_{i-1}\}\}$
belongs to $B^\theta_{p,p}(X)$ 
with $h_i=(b_i-b_{i-1})\chi_{F_{i}}$, where $F_{i} \coloneqq \bigcup_{j=i}^{l}E_{j}$. It follows that $\chi_{F_{i}}=(b_i-b_{i-1})^{-1}\, h_i\in B^\theta_{p,p}(X)$ and hence $\chi_{F_{i}} \in B^{\theta}_{p,p}(X)$. It follows that $\chi_{E_i}\in B^\theta_{p,p}(X)$ as well
for $i=1,\cdots, l$. Note that $\mu(E_{i} \cap E_{j}) = 0$ when $i \neq j$.

If $f$ is not non-negative and not identically zero, then we apply the above conclusion to $f_{+}$ and $f_{-}$ separately, and
so we have distinct positive numbers $a_1,\cdots, a_j$ and distinct positive numbers $b_1,\cdots, b_l$ with 
$j,l\le k$, 
and measurable sets $E_1,\cdots, E_j$ and $F_1,\cdots, F_l$ such that 
\[
f=f_+-f_-=\sum_{i=1}^ja_i\, \chi_{E_i}-\sum_{m=1}^l b_m\, \chi_{F_m}.
\]
We can also ensure that $\mu(E_i\cap F_m)=0$ for all $(i,m)$. Moreover, as $f\in L^p(X)$, we must have
$\mu(E_i)$ and $\mu(F_m)$ are finite whenever $1\le i\le j$ and $1\le m\le l$. Thus the collection
$\{\chi_{E_i},\, \chi_{F_m}\, :\, i\in\{1,\cdots, j\}, m\in\{1,\cdots, l\}\}$ is a linearly independent collection of 
functions in $B^\theta_{p,p}(X)$, and hence we must have that $m+l\le k$, that is,
there are at most $k$ non-zero real numbers $c_1,\cdots, c_n$ such that
\[
\mu(X\setminus f^{-1}(\{c_1,\cdots, c_n,0\}))=0.
\]
\\
{\bf Proof of (iii):} Let $\{f_1,\cdots, f_k\}$ be a basis for $B^\theta_{p,p}(X)$. By (ii), we know that for
each $j=1,\cdots, k$ there are measurable subsets $E_{j,1},\cdots, E_{j,N_j}$ of $X$ 
with $\chi_{E_{j,i}} \in B^\theta_{p,p}(X)$ and \emph{distinct} non-zero
real numbers $a_{j,1},\cdots, a_{j,N_j}$ such that
\[
f_j=\sum_{i=1}^{N_j}a_{j,i}\, \chi_{E_{j,i}}.
\]
We can make this simple-function decomposition of $f_j$ so that $\mu(E_{j,i}\cap E_{j,k})=0$ for
$i,k\in\{1,\cdots, N_j\}$ with $i\ne k$ and in addition we require that $\mu(E_{j,i})>0$ for each
$i=1,\cdots, N_j$. 

Next, we break the sets $E_{j,i}$, $j=1,\cdots, k$ and $i=1,\cdots, N_j$ into pairwise disjoint subsets as
follows. Observing that $\mu(E_{j,i}\cap E_{j,n})=0$ if $i\ne n$, it suffices to consider pairs of sets
$E_{j,i}$ and $E_{m,n}$ with $j\ne m$.
Since $\chi_{E_{j,i}}$ and $\chi_{E_{m,n}}$ are in $B^\theta_{p,p}(X)$, it follows from Lemma~\ref{lem:leibniz} that 
the function $\chi_{E_{j,i}\cap E_{m,n}}=\chi_{E_{j,i}}\, \chi_{E{m,n}}$ is also in $B^\theta_{p,p}(X)$.
If $\mu(E_{j,i}\cap E_{m,n})>0$ and $\mu(E_{j,i}\Delta E_{m,n})>0$,
then we can replace $E_{j,i}$ and $E_{m,n}$ with $E_{j,i}\cap E_{m,n}$, and $E_{j,i}\setminus E_{m,n}$
if $\mu(E_{j,i}\setminus E_{m,n})>0$ and $E_{m,n}\setminus E_{j,i}$ if $\mu(E_{m,n}\setminus E_{j,i})>0$
(note that in the case considered here, we must have at least one of $\mu(E_{m,n}\setminus E_{j,i})$
and $\mu(E_{j,i}\setminus E_{m,n})$ is positive).

Since the collection $\{E_{j,i}\, :\, j=1,\cdots, k, i=1,\cdots, N_j\}$ is a finite collection of sets, the above procedure
involving each pair of sets from this collection needs to be done only finitely many times; thus we obtain the
collection of sets $E_i$, $i=1,\cdots, N$ such that
\begin{equation}\label{eq:disjoint}
\mu(E_i\cap E_j)=0 \text{ whenever } i\ne j.
\end{equation}

As each $f_j$ is a linear combination of the characteristic functions of $E_{j,i}$, $i=1,\cdots, N_j$, it follows
that $f_j$ is a linear combination of the characteristic functions $\chi_{E_i}$, $i=1,\cdots, N$. Because
the collection $\{f_1,\cdots, f_k\}$ spans $B^\theta_{p,p}(X)$, the collection
$\{\chi_{E_i}\, :\, i=1,\cdots, N\}$ spans $B^\theta_{p,p}(X)$ as well. Moreover, by~\eqref{eq:disjoint}
this collection of functions is also linearly independent; hence $N=k$, and this collection forms a basis
for $B^\theta_{p,p}(X)$.

Finally, note that when $\mu(X)<\infty$, 
the constant function $u\equiv 1$ is in $B^\theta_{p,p}(X)$, and so necessarily 
$u=\sum_{j=1}^k\chi_{E_j}$, that is, $\mu(X\setminus\bigcup_{j=1}^kE_j)=0$. \\
{\bf Proof of (iv):} By (iii), it is enough to show that $B^\theta_{p,p}(E_i)$ consists only of constant
functions (i.e. the dimension of $B^\theta_{p,p}(E_i)$ is $1$) for
all $i=1,\cdots, k$.
Now suppose there is $i\in \{1,\cdots, k\}$ and a
non-constant $g\in B^\theta_{p,p}(E_i)$.
By Lemma~\ref{lem:normal_cont}, we may assume that $g$ is bounded.
Since $\chi_{E_i}\in B^\theta_{p,p}(X)$, we have
\begin{align}
||\chi_{E_i}||_{B^\theta_{p,p}(X)}^p
&=\int_{E_i^c}\int_{E_i}\frac{1}{d(x,y)^{\theta p}\, \mu(B(x,d(x,y)))}\, d\mu(y)\, d\mu(x)\nonumber\\
+& \int_{E_i}\int_{E_i^c}\frac{1}{d(x,y)^{\theta p}\, \mu(B(x,d(x,y)))}\, d\mu(y)\, d\mu(x)<\infty.~~~~~\label{eq:id-int}
\end{align}
Now define $\widetilde g: X\to \mathbb R$ by $\widetilde g=g_i\chi_{E_i}$, that is,
$\widetilde g|_{E_i}=g$ and $\widetilde g|_{E_i^c}=0$. Then
$\|\widetilde g\|_{L^p(X)}^p=\|g\|_{L^p(E_i)}^p<\infty$ and
\begin{eqnarray*}
||\widetilde g||_{B^\theta_{p,p}(X)}^p& \le &
||g||_{B^\theta_{p,p}(E_i)}^p
+\int_{E_i^c}\int_{E_i}\frac{|g(y)|^p}{d(x,y)^{\theta p}\, \mu(B(x,d(x,y)))}\, d\mu(y)\, d\mu(x)\\
&&~\qquad\qquad~~+ \int_{E_i}\int_{E_i^c}\frac{|g(x)|^p}{d(x,y)^{\theta p}\, \mu(B(x,d(x,y)))}\, d\mu(y)\, d\mu(x)\\
&\le & ||g||_{B^\theta_{p,p}(E_i)}^p+\|g\|_{L^\infty(X)}^p
||\chi_{E_i}||_{B^\theta_{p,p}(X)}^p<\infty,
\end{eqnarray*}
where the last inequality is due to~\eqref{eq:id-int}.
It follows that $\widetilde g\in B^\theta_{p,p}(X)$, and so by~(iii) there are real numbers $a_1,\cdots, a_k$
such that $\widetilde{g}=\sum_{j=1}^ka_j\chi_{E_j}$, which in turn means that $\widetilde g$ (and hence $g$) is constant
$\mu$-a.e.~in $E_i$, contradicting the non-constant nature of $g$. It follows that every function in $B^\theta_{p,p}(E_i)$ must
be constant. 
\end{proof} 

\begin{remark}
Lemma \ref{lem:bounded} proves claims~\ref{it:positivemass},~\ref{it:cover},~\ref{it:basis}
and~\ref{it:directsum} of Theorem~\ref{thm:main1}. 
Lemma~\ref{lem:Besov-KS}
verifies claim~\ref{it:zeroenergy} of Theorem~\ref{thm:main1}. Claim~\ref{it:thetap} of Theorem~\ref{thm:main1} follows consequently from the definition of $\theta_p(X)$.
\end{remark}

\begin{lem}\label{lem:invariantEi}
Under the hypotheses of Lemma~\ref{lem:bounded} above, and with the sets $E_i$, $i=1,\cdots, k$, as
constructed in that lemma, we have that $u\chi_{E_i}\in KS^\theta_p(X)$
whenever $u\in KS^\theta_p(X)$ is bounded. 
\end{lem}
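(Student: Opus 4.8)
The idea is that $u\chi_{E_i}$ differs from $u$ only through multiplication by the cutoff $\chi_{E_i}$, and that this cutoff carries no Korevaar--Schoen energy: by Lemma~\ref{lem:bounded}(iii) we have $\chi_{E_i}\in B^\theta_{p,p}(X)$, and hence $\|\chi_{E_i}\|_{KS^\theta_p(X)}=0$ by Lemma~\ref{lem:Besov-KS}. First I would note that $u\chi_{E_i}\in L^p(X)$, since $|u\chi_{E_i}|\le|u|$ and $u\in KS^\theta_p(X)\subset L^p(X)$, so that $u\chi_{E_i}$ is a legitimate candidate for membership in $KS^\theta_p(X)$. The computational starting point is the elementary pointwise bound, valid for all $x,y\in X$,
\[
\bigl|u(y)\chi_{E_i}(y)-u(x)\chi_{E_i}(x)\bigr|\le|u(y)-u(x)|+\|u\|_{L^\infty(X)}\,|\chi_{E_i}(y)-\chi_{E_i}(x)|,
\]
obtained by writing $u(y)\chi_{E_i}(y)-u(x)\chi_{E_i}(x)=\chi_{E_i}(x)\bigl(u(y)-u(x)\bigr)+u(y)\bigl(\chi_{E_i}(y)-\chi_{E_i}(x)\bigr)$ and using $0\le\chi_{E_i}\le1$ together with $|u|\le\|u\|_{L^\infty(X)}$.

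Next, for each fixed $t>0$ I would apply the triangle inequality for the $L^p$-norm with respect to the measure on $X\times X$ given by $\chi_{B(x,t)}(y)\,\mu(B(x,t))^{-1}\,d\mu(y)\,d\mu(x)$, applied to the displayed bound divided by $t^\theta$. This yields
\[
\left(\int_X\vint{B(x,t)}\frac{|(u\chi_{E_i})(y)-(u\chi_{E_i})(x)|^p}{t^{\theta p}}\,d\mu(y)\,d\mu(x)\right)^{1/p}\le\mathcal{A}(t)^{1/p}+\|u\|_{L^\infty(X)}\,\mathcal{B}(t)^{1/p},
\]
where $\mathcal{A}(t)$ (resp. $\mathcal{B}(t)$) denotes the same double integral with $u\chi_{E_i}$ replaced by $u$ (resp. by $\chi_{E_i}$). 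Taking $\limsup_{t\to0^+}$, using that $x\mapsto x^{1/p}$ is continuous and increasing, that $\limsup_{t\to0^+}\mathcal{B}(t)=\|\chi_{E_i}\|_{KS^\theta_p(X)}^p=0$, and the subadditivity of $\limsup$, we obtain $\|u\chi_{E_i}\|_{KS^\theta_p(X)}\le\|u\|_{KS^\theta_p(X)}<\infty$, hence $u\chi_{E_i}\in KS^\theta_p(X)$.

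Since this is essentially a one-line estimate, there is no serious obstacle; the only point deserving care is the passage to the $\limsup$, namely verifying that the term $\mathcal{B}(t)$ produced by the cutoff genuinely vanishes as $t\to0^+$ --- which is exactly the content of Lemma~\ref{lem:Besov-KS} applied to $\chi_{E_i}\in B^\theta_{p,p}(X)$ --- and observing that the boundedness of $u$ is used precisely to control the term $u(y)\bigl(\chi_{E_i}(y)-\chi_{E_i}(x)\bigr)$ in the pointwise identity above.
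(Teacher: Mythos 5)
Your argument is correct and is essentially the paper's proof written out in full: the paper disposes of this lemma in one line by invoking the Leibniz-type estimate of Lemma~\ref{lem:leibniz} together with $\chi_{E_i}\in B^\theta_{p,p}(X)$ (hence $\|\chi_{E_i}\|_{KS^\theta_p(X)}=0$ by Lemma~\ref{lem:Besov-KS}), which is exactly the pointwise product decomposition and Minkowski/limsup argument you carry out. If anything, your version is the more careful one, since $u$ is only assumed to lie in $KS^\theta_p(X)$ rather than $B^\theta_{p,p}(X)$, so the Leibniz estimate must indeed be applied at the level of the scale-$t$ energy functionals as you do.
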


\begin{proof}
The claim follows immediately from combining Lemma~\ref{lem:leibniz} and
the fact that $\chi_{E_{i}} \in B^{\theta}_{p,p}(X)$. 
\end{proof} 

Finally, the next lemma verifies~\ref{it:energydecomp} of Theorem~\ref{thm:main1} and completes the proof of Theorem~\ref{thm:main1}.

\begin{lem}\label{lem:main.sub6}
Under the setting of Theorem~\ref{thm:main1}, claim~\ref{it:energydecomp} holds true.
\end{lem}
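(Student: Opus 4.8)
The plan is to expand the Korevaar--Schoen energy of $u\chi_{E_j}$ directly from the definition, splitting the domain of the double integral according to membership in $E_j$, and then to recognize the ``cross terms'' as controlled by the energy of $\chi_{E_j}$, which vanishes. Write $v\coloneqq u\chi_{E_j}$. First I would record that $v\in KS^\theta_p(X)$ by Lemma~\ref{lem:invariantEi} (since $u\in KS^\theta_p(X)\cap L^\infty(X)$ and $\chi_{E_j}\in B^\theta_{p,p}(X)$ by claim~\ref{it:basis}), so the left-hand side is finite; moreover $v\in L^p(X)$ because $\mu(E_j)<\infty$ and $u\in L^\infty(X)$, so the integrand below is a genuine $L^1$ object for each fixed $r>0$. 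For every $r>0$, splitting the outer integral over $E_j$ versus $X\setminus E_j$ and, within each, the inner integral over $B(x,r)\cap E_j$ versus $B(x,r)\setminus E_j$, and discarding the piece where both points lie outside $E_j$ (there $v$ vanishes identically), we obtain
\[
\int_X\vint{B(x,r)}\frac{\abs{v(x)-v(y)}^p}{r^{\theta p}}\,d\mu(y)\,d\mu(x)=I_1(r)+I_2(r)+I_3(r),
\]
where $I_1(r)=\int_{E_j}\int_{B(x,r)\cap E_j}\frac{\abs{u(x)-u(y)}^p}{r^{\theta p}\,\mu(B(x,r))}\,d\mu(y)\,d\mu(x)$ (on that piece $v=u$ in both slots), $I_2(r)=\int_{E_j}\int_{B(x,r)\setminus E_j}\frac{\abs{u(x)}^p}{r^{\theta p}\,\mu(B(x,r))}\,d\mu(y)\,d\mu(x)$, and $I_3(r)=\int_{X\setminus E_j}\int_{B(x,r)\cap E_j}\frac{\abs{u(y)}^p}{r^{\theta p}\,\mu(B(x,r))}\,d\mu(y)\,d\mu(x)$. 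Note that $I_1(r)$ is exactly the expression whose $\limsup$ appears on the right-hand side of the claim.

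Next I would estimate the cross terms using $u\in L^\infty(X)$:
\[
I_2(r)\le\norm{u}_{L^\infty(X)}^p\int_{E_j}\frac{\mu(B(x,r)\setminus E_j)}{r^{\theta p}\,\mu(B(x,r))}\,d\mu(x),\qquad I_3(r)\le\norm{u}_{L^\infty(X)}^p\int_{X\setminus E_j}\frac{\mu(B(x,r)\cap E_j)}{r^{\theta p}\,\mu(B(x,r))}\,d\mu(x).
\]
The sum of the two integrals on the right is precisely $\int_X\vint{B(x,r)}\frac{\abs{\chi_{E_j}(x)-\chi_{E_j}(y)}^p}{r^{\theta p}}\,d\mu(y)\,d\mu(x)$. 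Since $\chi_{E_j}\in B^\theta_{p,p}(X)$, Lemma~\ref{lem:Besov-KS} gives $\norm{\chi_{E_j}}_{KS^\theta_p(X)}=0$, i.e.\ the $\limsup$ as $r\to 0^+$ of that non-negative quantity is $0$; since each of the two displayed integrals is itself non-negative, each tends to $0$, and hence $I_2(r)\to 0$ and $I_3(r)\to 0$ as $r\to 0^+$.

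Finally, from $I_2(r)+I_3(r)\to 0$ it follows that $\limsup_{r\to 0^+}\bigl(I_1(r)+I_2(r)+I_3(r)\bigr)=\limsup_{r\to 0^+}I_1(r)$, which is the asserted identity. I do not expect a deep obstacle here: the only nontrivial points are the careful bookkeeping of the four domain-pieces in the splitting and the observation that the boundedness hypothesis $u\in L^\infty(X)$ is exactly what lets one replace $\abs{u(x)}^p$ and $\abs{u(y)}^p$ by $\norm{u}_{L^\infty(X)}^p$ and thereby reduce the cross terms to the (vanishing) energy of $\chi_{E_j}$. Without $L^\infty$ one would instead have to show directly that $\int_{E_j}\abs{u(x)}^p\,\mu(B(x,r)\setminus E_j)\big/\bigl(r^{\theta p}\mu(B(x,r))\bigr)\,d\mu(x)\to 0$, which is not clear and is precisely the gap flagged in the remark following Theorem~\ref{thm:main1}.
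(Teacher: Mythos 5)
Your proof is correct and follows essentially the same route as the paper's: split the energy of $u\chi_{E_j}$ into the $E_j\times E_j$ term plus the two cross terms, bound the cross terms by $\norm{u}_{L^\infty(X)}^p$ times the Korevaar--Schoen energy of $\chi_{E_j}$, and invoke claim~\ref{it:zeroenergy} (via Lemma~\ref{lem:Besov-KS}) to kill them in the limit. Your closing remark about where the $L^\infty$ hypothesis enters matches the paper's own caveat following Theorem~\ref{thm:main1}.
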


\begin{proof}
Let $u\in KS^\theta_p(X)$ such that $\|u\|_{L^\infty(X)}=:M$ is bounded. Then
\begin{align*}
\int_X\vint{B(x,r)}&\frac{|u(x)\chi_{E_j}(x)-u(y)\chi_{E_j}(y)|^p}{r^{\theta p}}\, d\mu(y)\, d\mu(x)\\
&=\int_{E_j}\int_{B(x,r)\cap E_j}\frac{|u(y)-u(x)|^p}{r^{\theta p}\, \mu(B(x,r))}\, d\mu(y)\, d\mu(x)\\
    &\qquad+\int_{E_j}\int_{B(x,r)\setminus E_j}\frac{|u(x)\chi_{E_j}(x)|^p}{r^{\theta p}\, \mu(B(x,r))}\, d\mu(y)\, d\mu(x)\\
    &\qquad+\int_{X\setminus E_j}\int_{B(x,r)\cap E_j}\frac{|u(y)\chi_{E_j}(y)|^p}{r^{\theta p}\, \mu(B(x,r))}\, d\mu(y)\, d\mu(x).
\end{align*}
Note that
\begin{align*}
\int_{E_j}&\int_{B(x,r)\setminus E_j}\frac{|u(x)\chi_{E_j}(x)|^p}{r^{\theta p}\, \mu(B(x,r))}\, d\mu(y)\, d\mu(x)\\
   &\qquad\qquad\qquad+ \int_{X\setminus E_j}\int_{B(x,r)\cap E_j}\frac{|u(y)\chi_{E_j}(y)|^p}{r^{\theta p}\, \mu(B(x,r))}\, d\mu(y)\, d\mu(x)\\
  \le &M^p\int_{E_j}\int_{B(x,r)\setminus E_j}\frac{|\chi_{E_j}(x)|^p}{r^{\theta p}\, \mu(B(x,r))}\, d\mu(y)\, d\mu(x)\\
  &\qquad\qquad\qquad+ M^p\int_{X\setminus E_j}\int_{B(x,r)\cap E_j}\frac{|\chi_{E_j}(y)|^p}{r^{\theta p}\, \mu(B(x,r))}\, d\mu(y)\, d\mu(x)\\
  =&M^p\int_{E_j}\int_{B(x,r)\setminus E_j}\frac{|\chi_{E_j}(x)-\chi_{E_j}(y)|^p}{r^{\theta p}\, \mu(B(x,r))}\, d\mu(y)\, d\mu(x)\\
  &\qquad\qquad\qquad+ M^p\int_{X\setminus E_j}\int_{B(x,r)\cap E_j}\frac{|\chi_{E_j}(x)-\chi_{E_j}(y)|^p}{r^{\theta p}\, \mu(B(x,r))}\, d\mu(y)\, d\mu(x)\\
  \le &M^{p}\int_X\vint{B(x,r)}\frac{|\chi_{E_j}(x)-\chi_{E_j}(y)|^p}{r^{\theta p}}\, d\mu(y)\, d\mu(x),
\end{align*}
and thanks to~\ref{it:zeroenergy} of Theorem~\ref{thm:main1} (verified above),
the last expression above tends to $0$ as $r\to 0^+$. It follows that
\begin{align*}
\|u\chi_{E_j}\|_{KS^\theta_p(X)}^p&=\limsup_{r\to 0^+}\int_X\vint{B(x,r)}\frac{|u(x)\chi_{E_j}(x)-u(y)\chi_{E_j}(y)|^p}{r^{\theta p}}\, d\mu(y)\, d\mu(x)\\
&=\limsup_{r\to 0^+}\int_{E_j}\int_{B(x,r)\cap E_j}\frac{|u(y)-u(x)|^p}{r^{\theta p}\, \mu(B(x,r))}\, d\mu(y)\, d\mu(x),
\end{align*}
completing the proof.
\end{proof}

\section{Proof of Theorem~\ref{thm:main2} and Theorem~\ref{thm:main3.general}}

In this section we provide a proof of the remaining two main results of this paper.

\begin{proof}[Proof of Theorem~\ref{thm:main2}] 
It suffices to show that any function in $B^{\theta}_{p,p}(X)$ is a constant function, in particular, the dimension of 
$B^{\theta}_{p,p}(X)$ is $1$ if $\mu(X) < \infty$, and $B^{\theta}_{p,p}(X) = \{ 0 \}$ if $\mu(X) = \infty$.
Suppose there is a non-constant function $g\in B^\theta_{p,p}(X)$.
Since $g$ is non-constant, at least one of $g_+$ and $g_-$ is non-constant; hence, without loss of generality, we may assume that
$g\ge 0$ on $X$. Then there is a positive real number $a$ such that $\mu(g^{-1}([a,\infty)))>0$ and $\mu(g^{-1}([0,a)))>0$.
We can then find a positive real number $\delta<a$ such that $\mu(g^{-1}([0,a-\delta]))>0$ as well. Now by 
Lemma~\ref{lem:normal_cont}
and Lemma~\ref{lem:Besov-KS}, we know that $g_{a,\delta}:=\max\{0,\min \{g-(a-\delta), \delta\}\}\in B^\theta_{p,p}(X)\subset KS^\theta_p(X)$
with $\|g_{a,\delta}\|_{KS^\theta_p(X)}=0$. On the other hand, the choices of $a$ and $\delta$ mean that
$\|g_{a,\delta}\|_{B^\theta_{p,\infty}(X)}>0$, violating condition~\ref{e:wmax}. Thus no such $g$ exists. 
\end{proof}

\begin{proof}[Proof of Theorem~\ref{thm:main3.general}]
	In~\cite[Theorem~1.5]{GYZ}, a condition called property~(NE) is assumed in addition; however, the
		proof of inequality~(2.8) in the proof of that theorem in~\cite{GYZ} does not need this property, and so we can
		use~\cite[(2.8)]{GYZ} verbatim in our setting. Now, by~\cite[(2.8)]{GYZ} 
	and by~\cite[Theorem 5.2]{GKS}, there exists $C \ge 1$ such that for any $u \in B^{\theta}_{p,\infty}(X)$,
	\begin{equation*}
		\liminf_{t \to 0^+}\int_{X}\vint{B(x,t)}\frac{\abs{u(x) - u(y)}^{p}}{t^{p\theta}}\,d\mu(y)\,d\mu(x) \\
		\le C\liminf_{\theta'\to \theta^-}(\theta-\theta')\norm{u}_{B^{\theta'}_{p,p}(X)}^{p}.
	\end{equation*}
	Now suppose that there is a non-constant function $u\in B^{\theta}_{p,p}(X)$.
	Then we have by the Lebesgue dominated convergence theorem that
	\[
	\lim_{\theta'\to \theta^{-}}\norm{u}_{B^{\theta'}_{p,p}(X)}^{p}
	= \norm{u}_{B^{\theta}_{p,p}(X)}^{p} > 0,
	\]
	but then
	\[
	\liminf_{\theta'\to \theta^-}(\theta-\theta')\norm{u}_{B^{\theta'}_{p,p}(X)}^{p} = 0,
	\]
	whence it follows from \eqref{e:KSSob} that $\int_{X}\abs{u - u_{X}}^{p}\,d\mu = 0$.
	Hence $u$ must be constant on $X$, which is a contradiction of the supposition that $u$ is non-constant on $X$.
	Therefore $B^{\theta}_{p,p}(X)$ consists only of constant functions.
\end{proof}

\begin{proof}[Proof of Corollary~\ref{cor:main3}]
	Under the hypotheses of Corollary~\ref{cor:main3}, we obtain $\theta_{p}(X) = 1$ and \eqref{e:KSSob} by \cite[Theorem 5.1]{Bau} and \cite[Theorem 10.5.2]{HKSTbook}, so we can apply Theorem~\ref{thm:main3.general}.
\end{proof} 

\noindent {\bf Acknowledgements.}\,\,  
The authors are grateful to the anonymous referee for helpful comments that improved the exposition of this paper.

\end{document}